\documentclass[11pt]{amsart}
\usepackage{amsmath,amssymb,amsthm} 
\usepackage[alphabetic,abbrev,nobysame]{amsrefs} 
\usepackage{tikz,tikz-cd}
\usepackage{xcolor}
\usepackage{mathtools}
\usepackage{stmaryrd} 
\DeclarePairedDelimiter\Brackets{\llbracket}{\rrbracket} 

\newcommand{\Z}{\mathbb{Z}} 
\newcommand{\Ptn}{\mathsf{P}} 
\newcommand{\TL}{\mathsf{TL}} 
\newcommand{\B}{\mathsf{B}} 
\newcommand{\M}{\mathsf{M}} 
\newcommand{\PTL}{\mathsf{PTL}} 
\newcommand{\PB}{\mathsf{PB}} 
\newcommand{\RP}{\mathsf{RP}} 
\newcommand{\LP}{\mathsf{LP}} 
\newcommand{\RR}{\mathsf{R}} 
\newcommand{\LL}{\mathsf{L}} 
\newcommand{\RL}{\mathsf{RL}} 

\newcommand{\End}{\operatorname{End}} 
\newcommand{\Hom}{\operatorname{Hom}} 
\newcommand{\gl}{\mathfrak{gl}} 
\newcommand{\fraksl}{\mathfrak{sl}} 
\newcommand{\ov}{\overline} 
\newcommand{\UU}{\mathbf{U}} 
\newcommand{\D}{\mathcal{D}} 

\newcommand{\e}{\varepsilon}

\newcommand{\topp}[1]{{\lceil #1 \rceil}}
\newcommand{\bott}[1]{{\lfloor #1 \rfloor}}

\newcommand{\Mat}{\operatorname{Mat}}
\newcommand{\rank}{\operatorname{rank}}
\newcommand{\shade}[1]{\fboxsep=0pt\colorbox{black!6}{$#1$}}
\newcommand{\bil}[2]{\langle #1, #2 \rangle} 

\newcommand{\dom}{\unrhd}  
\newcommand{\sdom}{\rhd}  
\newcommand{\lessdom}{\unlhd}  
\newcommand{\slessdom}{\lhd}  

\newtheorem{thm}{Theorem}[section]
\newtheorem*{thm*}{Theorem}
\newtheorem{lem}[thm]{Lemma}
\newtheorem*{lem*}{Lemma}
\newtheorem{prop}[thm]{Proposition}
\newtheorem*{prop*}{Proposition}
\newtheorem{cor}[thm]{Corollary}
\newtheorem*{cor*}{Corollary}

\newtheorem*{conj*}{Conjecture}

\theoremstyle{definition}

\newtheorem*{defn*}{Definition}

\newtheorem*{example*}{Example}
\newtheorem{rmk}[thm]{Remark}
\newtheorem*{rmk*}{Remark}

\newtheorem*{que*}{Question}

\newcommand{\onepic}{
\begin{tikzpicture}[scale = 0.35,thick, baseline={(0,-1ex/2)}] 
\tikzstyle{vertex} = [shape = circle, minimum size = 4pt, inner sep = 1pt] 
\node[vertex] (G--1) at (0.0, -1) [shape = circle, draw,fill=black] {}; 
\node[vertex] (G-1) at (0.0, 1) [shape = circle, draw,fill=black] {}; 
\draw (G-1) .. controls +(0, -1) and +(0, 1) .. (G--1); 
\end{tikzpicture}
}

\newcommand{\ppic}{
\begin{tikzpicture}[scale = 0.35,thick, baseline={(0,-1ex/2)}] 
\tikzstyle{vertex} = [shape = circle, minimum size = 4pt, inner sep = 1pt] 
\node[vertex] (G--1) at (0.0, -1) [shape = circle, draw,fill=black] {}; 
\node[vertex] (G-1) at (0.0, 1) [shape = circle, draw,fill=black] {}; 
\end{tikzpicture}
}

\newcommand{\spic}{
\begin{tikzpicture}[scale = 0.35,thick, baseline={(0,-1ex/2)}] 
\tikzstyle{vertex} = [shape = circle, minimum size = 4pt, inner sep = 1pt] 
\node[vertex] (G--2) at (1.5, -1) [shape = circle, draw,fill=black] {}; 
\node[vertex] (G-1) at (0.0, 1) [shape = circle, draw,fill=black] {}; 
\node[vertex] (G--1) at (0.0, -1) [shape = circle, draw,fill=black] {}; 
\node[vertex] (G-2) at (1.5, 1) [shape = circle, draw,fill=black] {}; 
\draw (G-1) .. controls +(0.75, -1) and +(-0.75, 1) .. (G--2); 
\draw (G-2) .. controls +(-0.75, -1) and +(0.75, 1) .. (G--1); 
\end{tikzpicture}
}

\newcommand{\bpic}{
\begin{tikzpicture}[scale = 0.35,thick, baseline={(0,-1ex/2)}]
\tikzstyle{vertex} = [shape = circle, minimum size = 4pt, inner sep = 1pt] 
\node[vertex] (G--2) at (1.5, -1) [shape = circle, draw,fill=black] {}; 
\node[vertex] (G--1) at (0.0, -1) [shape = circle, draw,fill=black] {}; 
\node[vertex] (G-1) at (0.0, 1) [shape = circle, draw,fill=black] {}; 
\node[vertex] (G-2) at (1.5, 1) [shape = circle, draw,fill=black] {}; 
\draw (G-1) .. controls +(0.5, -0.5) and +(-0.5, -0.5) .. (G-2); 
\draw (G-2) .. controls +(0, -1) and +(0, 1) .. (G--2); 
\draw (G--2) .. controls +(-0.5, 0.5) and +(0.5, 0.5) .. (G--1); 
\draw (G--1) .. controls +(0, 1) and +(0, -1) .. (G-1); 
\end{tikzpicture} 
}

\newcommand{\epic}{
\begin{tikzpicture}[scale = 0.35,thick, baseline={(0,-1ex/2)}]
\tikzstyle{vertex} = [shape = circle, minimum size = 4pt, inner sep = 1pt] 
\node[vertex] (G--2) at (1.5, -1) [shape = circle, draw,fill=black] {}; 
\node[vertex] (G--1) at (0.0, -1) [shape = circle, draw,fill=black] {}; 
\node[vertex] (G-1) at (0.0, 1) [shape = circle, draw,fill=black] {}; 
\node[vertex] (G-2) at (1.5, 1) [shape = circle, draw,fill=black] {}; 
\draw (G--2) .. controls +(-0.5, 0.5) and +(0.5, 0.5) .. (G--1); 
\draw (G-1) .. controls +(0.5, -0.5) and +(-0.5, -0.5) .. (G-2); 
\end{tikzpicture} 
}

\newcommand{\lpic}{
\begin{tikzpicture}[scale = 0.35,thick, baseline={(0,-1ex/2)}] 
\tikzstyle{vertex} = [shape = circle, minimum size = 4pt, inner sep = 1pt] 
\node[vertex] (G--2) at (1.5, -1) [shape = circle, draw,fill=black] {}; 
\node[vertex] (G-1) at (0.0, 1) [shape = circle, draw,fill=black] {}; 
\node[vertex] (G--1) at (0.0, -1) [shape = circle, draw,fill=black] {}; 
\node[vertex] (G-2) at (1.5, 1) [shape = circle, draw,fill=black] {}; 
\draw (G-1) .. controls +(0.75, -1) and +(-0.75, 1) .. (G--2); 
\end{tikzpicture}
}

\newcommand{\rpic}{
\begin{tikzpicture}[scale = 0.35,thick, baseline={(0,-1ex/2)}] 
\tikzstyle{vertex} = [shape = circle, minimum size = 4pt, inner sep = 1pt] 
\node[vertex] (G--2) at (1.5, -1) [shape = circle, draw,fill=black] {}; 
\node[vertex] (G--1) at (0.0, -1) [shape = circle, draw,fill=black] {}; 
\node[vertex] (G-2) at (1.5, 1) [shape = circle, draw,fill=black] {}; 
\node[vertex] (G-1) at (0.0, 1) [shape = circle, draw,fill=black] {}; 
\draw (G-2) .. controls +(-0.75, -1) and +(0.75, 1) .. (G--1); 
\end{tikzpicture}
}

\newcommand{\oneonepic}{
\begin{tikzpicture}[scale = 0.35,thick, baseline={(0,-1ex/2)}] 
\tikzstyle{vertex} = [shape = circle, minimum size = 4pt, inner sep = 1pt] 
\node[vertex] (G--2) at (1.5, -1) [shape = circle, draw,fill=black] {}; 
\node[vertex] (G--1) at (0.0, -1) [shape = circle, draw,fill=black] {}; 
\node[vertex] (G-2) at (1.5, 1) [shape = circle, draw,fill=black] {}; 
\node[vertex] (G-1) at (0.0, 1) [shape = circle, draw,fill=black] {}; 
\draw (G-1) -- (G--1);
\draw (G-2) -- (G--2); 
\end{tikzpicture}
}

\newcommand{\oneppic}{
\begin{tikzpicture}[scale = 0.35,thick, baseline={(0,-1ex/2)}] 
\tikzstyle{vertex} = [shape = circle, minimum size = 4pt, inner sep = 1pt] 
\node[vertex] (G--2) at (1.5, -1) [shape = circle, draw,fill=black] {}; 
\node[vertex] (G--1) at (0.0, -1) [shape = circle, draw,fill=black] {}; 
\node[vertex] (G-2) at (1.5, 1) [shape = circle, draw,fill=black] {}; 
\node[vertex] (G-1) at (0.0, 1) [shape = circle, draw,fill=black] {}; 
\draw (G-1) -- (G--1);
\end{tikzpicture}
}

\newcommand{\ponepic}{
\begin{tikzpicture}[scale = 0.35,thick, baseline={(0,-1ex/2)}] 
\tikzstyle{vertex} = [shape = circle, minimum size = 4pt, inner sep = 1pt] 
\node[vertex] (G--2) at (1.5, -1) [shape = circle, draw,fill=black] {}; 
\node[vertex] (G--1) at (0.0, -1) [shape = circle, draw,fill=black] {}; 
\node[vertex] (G-2) at (1.5, 1) [shape = circle, draw,fill=black] {}; 
\node[vertex] (G-1) at (0.0, 1) [shape = circle, draw,fill=black] {}; 
\draw (G-2) -- (G--2); 
\end{tikzpicture}
}

\newcommand{\sink}{
\begin{tikzpicture}[scale = 0.35,thick, baseline={(0,-1ex/2)}] 
\tikzstyle{vertex} = [shape = circle, minimum size = 4pt, inner sep = 1pt] 
\node[vertex] (G--2) at (1.5, -1) [shape = circle, draw,fill=black] {}; 
\node[vertex] (G--1) at (0.0, -1) [shape = circle, draw,fill=black] {}; 
\node[vertex] (G-2) at (1.5, 1) [shape = circle, draw,fill=black] {}; 
\node[vertex] (G-1) at (0.0, 1) [shape = circle, draw,fill=black] {}; 
\end{tikzpicture}
}

\newcommand{\cuppic}{
\begin{tikzpicture}[scale = 0.35,thick, baseline={(0,-1ex/2)}]
\tikzstyle{vertex} = [shape = circle, minimum size = 4pt, inner sep = 1pt] 
\node[vertex] (G--2) at (1.5, -1) [shape = circle, draw,fill=black] {}; 
\node[vertex] (G--1) at (0.0, -1) [shape = circle, draw,fill=black] {}; 
\node[vertex] (G-1) at (0.0, 1) [shape = circle, draw,fill=black] {}; 
\node[vertex] (G-2) at (1.5, 1) [shape = circle, draw,fill=black] {}; 
\draw (G-1) .. controls +(0.5, -0.5) and +(-0.5, -0.5) .. (G-2); 
\end{tikzpicture} 
}

\newcommand{\cappic}{
\begin{tikzpicture}[scale = 0.35,thick, baseline={(0,-1ex/2)}]
\tikzstyle{vertex} = [shape = circle, minimum size = 4pt, inner sep = 1pt] 
\node[vertex] (G--2) at (1.5, -1) [shape = circle, draw,fill=black] {}; 
\node[vertex] (G--1) at (0.0, -1) [shape = circle, draw,fill=black] {}; 
\node[vertex] (G-1) at (0.0, 1) [shape = circle, draw,fill=black] {}; 
\node[vertex] (G-2) at (1.5, 1) [shape = circle, draw,fill=black] {}; 
\draw (G--2) .. controls +(-0.5, 0.5) and +(0.5, 0.5) .. (G--1); 
\end{tikzpicture} 
}


\newcommand{\Part}[1]{
 \foreach \x [count=\s from 1] in {#1}{
 	{\ifnum\s=1
		\draw (0,\s-1)--(\x,\s-1); 
		\fi}
   \draw (0,\s) to (\x,\s);
   \foreach \y in {0, ..., \x} {\draw (\y,\s)--(\y,\s-1);}
 }}

\def\UNIT{.18} \newcommand{\PART}[1]{
\begin{tikzpicture}[xscale=\UNIT, yscale=-\UNIT] 
	\Part{#1}
\end{tikzpicture}
}

\parskip=2pt
\renewcommand{\labelenumi}{(\alph{enumi})}
\allowdisplaybreaks

\begin{document}
\title[The partial Temperley--Lieb algebra]%
      {The partial Temperley--Lieb algebra\\and its representations}
\author{Stephen Doty}
\email{doty@math.luc.edu, tonyg@math.luc.edu}
\author{Anthony Giaquinto}
\address{Department of Mathematics and Statistics,
  Loyola University Chicago, Chicago, IL 60660 USA}

\subjclass{Primary 16T30, 16G99, 17B37}
\keywords{Diagram algebras, Schur--Weyl duality,
  Temperley--Lieb algebras, quantized enveloping algebras}

\dedicatory{Dedicated to the memory of Georgia Benkart}

\begin{abstract}\noindent
We give a combinatorial description of a new diagram algebra, the
partial Temperley--Lieb algebra, arising as the generic centralizer
algebra $\End_{\UU_q(\gl_2)}(V^{\otimes k})$, where $V = V(0) \oplus
V(1)$ is the direct sum of the trivial and natural module for the
quantized enveloping algebra $\UU_q(\gl_2)$. It is a proper subalgebra
of the Motzkin algebra (the $\UU_q(\fraksl_2)$-centralizer) of Benkart
and Halverson.  We prove a version of Schur--Weyl duality for the new
algebras, and describe their generic representation theory.
\end{abstract}
\maketitle

\section*{Introduction}\noindent
The Temperley--Lieb algebra $\TL_k(\delta)$ arose in \cite{TL} in
connection with the Potts model in mathematical physics. It was
rediscovered by Vaughan Jones in his seminal work
\cites{J83,J85,J86,J87a,J87b} on subfactors, in the guise of a von
Neumann algebra, enabling spectacular applications to knot theory and
many subsequent developments (see e.g.~\cites{Kauff, Martin:book, GHJ,
  Westbury, Ridout-StAubin}). An important feature of these algebras
is that when the ground ring is a field and $\delta = \pm(q+q^{-1})$,
\begin{equation}\label{e:TL-iso}
  \TL_k(\delta) \cong \End_{\UU_q(\fraksl_2)}(V(1)^{\otimes k})
\end{equation}
for almost all values of the parameter $q$, where $V(1)$ is the
$2$-dimensional type-$\mathbf{1}$ simple $\UU_q(\fraksl_2)$-module
(the ``natural'' module). Kauffman \cite{K:87} (see also \cite{BW})
found the now standard realization of $\TL_k(\delta)$ in terms of
planar Brauer diagrams in the Brauer algebra.

The partial Brauer algebra $\PB_k(\delta,\delta')$, the span of all
partition $k$-diagrams with blocksize at most two, was studied in
\cites{HdM,MM}. It comes naturally equipped with two independent
parameters $\delta$, $\delta'$ of disparate topological
significance. In \cite{BH} (see also \cite{DEG}) Benkart and Halverson
introduced the Motzkin algebra $\M_k(\delta,\delta')$, the subalgebra
of $\PB_k(\delta,\delta')$ spanned by planar partial Brauer diagrams.
It is known \cite{DG:twin} that $ \PB_k(\delta,\delta') \cong
\PB_k(\delta,1) $ for any $\delta' \ne 0$, so it is natural to
restrict one's attention to $\PB_k(\delta,1)$ and $\M_k(\delta,1)$.
For simplicity, set $\M_k(\delta) := \M_k(\delta,1)$; this is the
context of \cite{BH}.  When the ground ring is a field and $\delta =
1\pm(q+q^{-1})$, they obtain an isomorphism
\begin{equation}\label{e:M-iso}
\M_k(\delta) \cong \End_{\UU_q(\fraksl_2)}(V^{\otimes k})
\end{equation}
for almost all values of $q$, where $V = V(1)\oplus V(0)$ is the
direct sum of the natural module $V(1)$ (as above) and the trivial
module $V(0)$. Actually, \eqref{e:M-iso} is proved in \cite{BH} only
for the case $\delta = 1 - (q+q^{-1})$, but it is easily extended to
the above statement. Notice that the right hand side of
\eqref{e:M-iso} is independent of the choice of sign, so
$\M_k(1+(q+q^{-1})) \cong \M_k(1-(q+q^{-1}))$.

The isomorphism \eqref{e:M-iso} was unfortunately misstated in
\cite{BH}, where $\UU_q(\gl_2)$ incorrectly appeared in place of
$\UU_q(\fraksl_2)$ on the right hand side. The two centralizers
differ; indeed, their dimensions don't agree (see \S\ref{s:VV}).

The purpose of this paper is to identify a subalgebra $\PTL_k(\delta)$
of $\M_k(\delta)$, the \emph{partial Temperley--Lieb algebra} of the
title, such that when $\delta = 1\pm(q+q^{-1})$,
\begin{equation}\label{e:PTL-iso}
\PTL_k(\delta) \cong \End_{\UU_q(\gl_2)}(V^{\otimes k})
\end{equation}
for almost all values of $q$. The algebra $\PTL_k(\delta)$ has a basis
indexed by the set of \emph{balanced} Motzkin $k$-diagrams (diagrams
with the same number of cups as caps). It is notable that the basis
elements are not diagrams; instead, each basis element is an
alternating sum of diagrams with a unique maximal balanced diagram as
leading term. As the right hand side of \eqref{e:PTL-iso} is again
independent of the choice of sign, we have $\PTL_k(1+(q+q^{-1})) \cong
\PTL_k(1-(q+q^{-1}))$.

The use of the adjective ``partial'' in describing the algebras
$\PTL_k(\delta)$ fits into a more general scheme of ``partialization''
that goes back at least to \cite{Mazor}.

When the ground ring is a field, all irreducible representations of a
semisimple cellular algebra are absolutely irreducible. In other
words, cellular algebras over a field are semisimple if and only if
they are split semisimple, so the adjective ``split'' is often omitted
in describing these algebras in the semisimple case. This applies to
the algebras $\TL_k(\delta)$, $\M_k(\delta)$, and $\PTL_k(\delta)$
appearing in this paper.

Our main results are as follows:
\begin{enumerate}\renewcommand{\labelenumi}{(\roman{enumi})}
\item In Theorem~\ref{t:bases}, we find two natural bases
  $\{\bar{d}\}$, $\{\tilde{d} \}$ of $\PTL_k(\delta)$, each indexed by
  the set of balanced Motzkin $k$-diagrams. Theorem~\ref{t:bar-mult}
  works out a multiplication rule for each basis.

\item We show that $\PTL_k(\delta)$ is an iterated inflation of
  Temperley--Lieb algebras, in the sense of
  \cites{KX:99,KX:01,Green-Paget}, hence is cellular in the sense of
  \cite{GL:96}; see Remark~\ref{r:Morita}. More precisely, in
  Theorem~\ref{t:Xn} we prove the stronger result that
  $\PTL_k(\delta)$ is Morita equivalent to a direct sum of
  Temperley--Lieb algebras with parameter $\delta-1$.

\item We construct the cell modules for $\PTL_k(\delta)$, see
  Theorem~\ref{t:PTL-cell}, and prove that when the ground ring is a
  field and $\TL_n(\delta-1)$ is semisimple for all $n\le k$
  then the same is true of $\PTL_k(\delta)$, see Theorem~\ref{t:ss}.

\item In Theorem~\ref{t:Motzkin-SWD}, we slightly extend the
  aforementioned Schur--Weyl duality result of \cite{BH}, showing that
  there are many choices in how to make $\M_k(\delta)$ act on
  $V^{\otimes k}$, all of which imply that $\M_k(\delta)$ is
  isomorphic to the $\UU_q(\fraksl_2)$-centralizer of $V^{\otimes k}$
  under suitable hypotheses.

\item For $\delta = 1 \pm (q+q^{-1})$, under suitable hypotheses, we
  define a faithful action of $\PTL_k(\delta)$ on $V^{\otimes k}$
  commuting with the action of $\UU_q(\gl_2)$, and prove the
  isomorphism \eqref{e:PTL-iso} in Theorem~\ref{t:SWD}. Thus we obtain
  a version of Schur--Weyl duality for $V^{\otimes k}$ regarded as a
  bimodule for $\PTL_k(\delta)$, $\UU_q(\gl_2)$.
\end{enumerate}
Finally, in Appendix A, we reinterpret results of \cite{GHJ} to obtain
a precise semisimplicity criterion for $\TL_k(\delta)$, when $\delta =
\pm(q+q^{-1})$ and the ground ring is a field.

\section{Some diagram algebras}\label{s:diagrams}\noindent
In this section, we define the standard diagram algebras needed for
this paper. Unless stated otherwise, we work over an arbitrary unital
commutative ring $\Bbbk$.

\subsection{Terminology}
Let $[k] := \{1, \dots, k\}$ and $[k]' := \{1', \dots, k'\}$.  The set
$\Ptn_k$ is the collection of all set partitions (equivalence
relations) on $[k] \cup [k]'$. If $d = \{B_1, \dots, B_l\}$ belongs to
$\Ptn_k$ where $B_1, \dots, B_l$ are pairwise disjoint, we call the
$B_i$ the \emph{blocks} of $d$. Typically, $d$ is depicted by a graph
on $2k$ vertices arranged in two parallel rows in a rectangle, with
vertices in the top (resp., bottom) row indexed by $[k]$ (resp.,
$[k]'$) in order from left to right. Edges are drawn in the interior
of the rectangle in any way such that the resulting connected
components coincide with the blocks. Although this graphical depiction
of elements of $\Ptn_k$ is not in general unique, the lack of
uniqueness causes no difficulty.  To be precise, we define a
$k$-\emph{diagram} $d$ to be the equivalence class of all graphs
depicting its underlying set partition $d$, where two such depictions
are equivalent if and only if they have the same blocks.  Henceforth,
we identify elements of $\Ptn_k$ with their corresponding
$k$-diagrams.

If $d_1$, $d_2$ are $k$-diagrams, their \emph{composite configuration}
$\Gamma(d_1, d_2)$ is the graph obtained by placing $d_1$ above $d_2$
and identifying the corresponding vertices in the middle row.  Let
$d_1 \circ d_2$ be the diagram obtained by retaining the edges with
endpoints in the union of the top and bottom rows of vertices in the
composite configuration, along with those vertices, and discarding the
rest of the configuration. The multiplication $\circ$ is associative,
so $(\Ptn_k, \circ)$ is a monoid, the \emph{partition monoid}.

One often identifies diagrams with morphisms in a suitable
category. In this paper, the reader should think of a diagram as
depicting a morphism going from its bottom to top row, so that
products depict compositions in which morphisms act on the left of
arguments.

\subsection{The partition algebra}\noindent
We refer the reader to \cite{HR:05} for basic properties of partition
algebras.  For any $\delta \in \Bbbk$, the \emph{partition algebra}
$\Ptn_k(\delta)$ is a twisted semigroup algebra on $\Ptn_k$. As a
$\Bbbk$-module, $\Ptn_k(\delta) = \Bbbk[\Ptn_k]$, the collection of
$\Bbbk$-linear combinations of elements of $\Ptn_k$. Given
$k$-diagrams $d_1, d_2$,
\begin{equation}\label{e:mult}
  d_1 d_2 = \delta^{N(d_1,d_2)}\, d_3 = \delta^{N(d_1,d_2)}\,(d_1 \circ d_2)
\end{equation}
where $N(d_1,d_2)$ is the number of interior blocks (connecting
vertices in the middle row) in $\Gamma(d_1,d_2)$ that get discarded in
forming $d_3 = d_1 \circ d_2$. Extending this multiplication rule to
linear combinations bilinearly as usual, the set $\Ptn_k(\delta)$
becomes an associative algebra with unit.

Given diagrams $d_1 \in \Ptn_k$, $d_2 \in \Ptn_l$, let $d_1
\otimes d_2 \in \Ptn_{k+l}$ be the diagram obtained by placing $d_1$
to the left of $d_2$. (The notation $\otimes$ in this context is not a
tensor product.)  The following basic diagrams
\begin{equation}
  1 : = \shade{\onepic}\;, \quad p := \shade{\ppic}\;,
  \quad s := \shade{\spic}\;, \quad b:= \shade{\bpic}
\end{equation}
are fundamental building blocks for all partition diagrams. Notice
that $1_k = 1 \otimes \cdots \otimes 1$ (with $k$ factors) is the
identity element of $\Ptn_k(\delta)$; in the sequel we will often
abuse notation by writing $1$ in place of $1_k$. In the set $\Ptn_k$,
define
\begin{align*}
  p_j := 1_{j-1} \otimes p \otimes 1_{k-j} &= \;\shade{\onepic \cdots \onepic
  \quad \ppic \quad\onepic \cdots \onepic}\\
  \intertext{with the isolated vertices in the $j$th column,
    for $j = 1, \dots, k$, and define}
  s_i := 1_{i-1} \otimes s \otimes 1_{k-1-i} &= \;\shade{\onepic \cdots
    \onepic\quad \spic \quad\onepic \cdots \onepic}\\
  b_i := 1_{i-1} \otimes b \otimes 1_{k-1-i} &= \;\shade{\onepic \cdots
    \onepic\quad \bpic \quad\onepic \cdots \onepic}
\end{align*}
for $i = 1, \dots, k-1$. The elements $p_j, s_i, b_i$ form a set of
generators of $\Ptn_k(\delta)$; its defining relations are given in
\cite{HR:05}*{Thm.~1.11}. (Note that $b_i$ is denoted by
$p_{i+\frac{1}{2}}$ in that reference.)
We also need the diagrams
\[
e := \; \shade{\epic} \quad\text{and}\quad e_i:= 1_{i-1} \otimes e \otimes
1_{k-1-i} = \;\shade{\onepic \cdots \onepic\quad \epic \quad\onepic \cdots
\onepic}
\]
for any $i = 1, \dots, k-1$. The reader may easily check that the
elements $e_i$ satisfy the identity
\begin{equation}\label{e:bppb}
  e_i = b_i p_i p_{i+1} b_i 
\end{equation}
in $\Ptn_k(\delta)$; this identity provides an alternative definition of
$e_i$.

\subsection{The partial Brauer algebra}
The subalgebra of $\Ptn_k(\delta)$ spanned by the $k$-diagrams in
which each block has cardinality at most $2$ is the (one-parameter)
\emph{partial Brauer algebra} $\PB_k(\delta,\delta)$ of
\cites{MM,HdM}, who studied its more general two-parameter variant
$\PB_k(\delta,\delta')$, with multiplication defined by
\begin{equation}\label{e:2mult}
  d_1 d_2 = \delta^{N_1(d_1,d_2)} \delta'^{\,N_2(d_1,d_2)}\, d_3 =
  \delta^{N_1(d_1,d_2)} \delta'^{\,N_2(d_1,d_2)}\, (d_1 \circ d_2)
\end{equation}
where $N_1(d_1,d_2)$ (resp., $N_2(d_1,d_2)$) is the number of interior
loops (resp., interior paths, including paths consisting of a single
vertex) in the middle row of $\Gamma(d_1,d_2)$, and $d_3 = d_1 \circ
d_2$ is the product in the partition monoid $\Ptn_k$.

Note that $e_i$, $s_i$ ($i \in [k-1]$) and $p_j$ ($j \in [k]$) belong
to $\PB_k(\delta,\delta')$; in fact, this is a set of generators of
that algebra.  Both $e_i$ and $p_i$ are pseudo-idempotents, satisfying
\begin{equation}
  e_i^2 = \delta\, e_i, \qquad p_i^2 = \delta'\, p_i .
\end{equation}
For any $\delta' \ne 0$, we have $\PB_k(\delta,\delta') \cong
\PB_k(\delta,1)$; see \eqref{e:1-iso} in \S\ref{s:alt}.  Thus it makes
sense to focus on $\PB_k(\delta,1)$.

\subsection{The Motzkin algebra}
A $k$-diagram that can be drawn without any intersections is said to
\emph{planar}.  The \emph{Motzkin algebra} $\M_k(\delta,\delta')$
studied in \cite{BH} is the subalgebra of $\PB_k(\delta,\delta')$
spanned by the planar partial Brauer $k$-diagrams; that is, planar
diagrams with blocks of cardinality at most $2$.

The algebra $\PB_k(\delta,\delta')$ contains elements $r_i :=
p_is_i$ and $l_i := s_i p_i$, depicted by
\begin{align*}
r_i = 1_{i-1} \otimes r \otimes 1_{k-1-i} &= \;\shade{\onepic \cdots
\onepic\quad \rpic \quad\onepic \cdots \onepic} \\
l_i = 1_{i-1} \otimes l \otimes 1_{k-1-i} &= \;\shade{\onepic \cdots
\onepic\quad \lpic \quad\onepic \cdots \onepic}
\end{align*}
for $i \in [k-1]$, where $r, l$ in $\Ptn_2$ are given by
\[
r = \;\shade{\rpic}\;, \qquad l = \;\shade{\lpic} \;.
\]
As the $r_i$, $l_i$ are planar, they belong to the Motzkin algebra
$\M_k(\delta,\delta')$.  The diagrams $p_i$ are also planar, hence
belong to $\M_k(\delta,\delta')$. They satisfy
\begin{equation}
  p_i = r_i l_i = l_{i-1}r_{i-1}
\end{equation}
for all values of the indices for which the equalities are sensible.
It is shown in \cite{BH} that
\begin{equation}
\text{$\M_k(\delta,\delta')$ is generated by the $e_i$, $r_i$, $l_i$}
\quad (i \in [k-1]).
\end{equation}
(The element $e_i$ is denoted by $t_i$ in \cite{BH}.)  A set of
defining relations for these generators can be found in
\cites{HLP}.

\subsection{The Temperley--Lieb algebra}\label{ss:TL}
The Brauer algebra $\B_k(\delta)$ is the subalgebra of
$\Ptn_k(\delta)$ spanned by the $k$-diagrams in which all blocks have
exactly two elements. The Temperley--Lieb algebra $\TL_k(\delta)$ is
the subalgebra of $\B_k(\delta)$ spanned by the planar $k$-diagrams
which are also in the Brauer algebra. It is the subalgebra generated
by $e_1, \dots, e_{k-1}$. As such, it is isomorphic to the algebra
defined by those generators and satisfying the relations
\begin{equation}\label{e:TL-rels}
  e_i^2 = \delta e_i, \quad e_i e_{i\pm 1} e_i = e_i, \quad e_ie_j =
  e_j e_i \text{ if } |i-j|>1.
\end{equation}
The rank of $\TL_k(\delta)$ over $\Bbbk$ is equal to the $k$th Catalan
number $\mathcal{C}_k = \frac{1}{k+1}\binom{2k}{k}$.

It is noteworthy that the algebra morphism defined on generators by
$e_i \mapsto -e_i$ for all $i$ defines an isomorphism of algebras
\begin{equation}\label{e:TL-sign}
  \TL_k(\delta) \cong \TL_k(-\delta).
\end{equation}
So the choice of sign of the parameter is purely a matter of
convenience.

If $\Bbbk$ is a field and $0 \ne q$ is an element of $\Bbbk$ such
that $q^2 \ne 1$, it is well known that
\begin{equation}\label{e:TL-endo}
  \TL_k(\pm(q+q^{-1})) \cong \End_{\UU}(V(1)^{\otimes k})
\end{equation}
where $V(1)$ is the $2$-dimensional ``natural'' representation of the
quantized enveloping algebra $\UU = \UU_q(\fraksl_2)$. There is a
(unique) copy of the trivial $\UU$-module in $V(1) \otimes V(1)$.  In
the isomorphism~\eqref{e:TL-endo}, we identify
\[
e_i = \pm(q+q^{-1})\, 1^{\otimes(i-1)} \otimes \pi \otimes 1^{\otimes
  (k-i-1)}
\]
where $\pi$ is an orthogonal projection onto that trivial module and
$1$ denotes the identity map. See Section~\ref{s:VV} for details.

Finally, if $\Bbbk$ is a field and $q$ a nonzero element of $\Bbbk$
satisfying the condition $\Brackets{1}_q \Brackets{2}_q \cdots
\Brackets{k}_q \ne 0$, where $\Brackets{k}_q$ is the balanced form of
a quantum integer, then $\TL_k(\pm(q+q^{-1}))$ is semisimple over
$\Bbbk$; see Appendix~\ref{a:TL}. In particular, this semisimplicity
statement holds whenever $q$ is not a root of unity.

\section{Alternating bases}\label{s:alt}\noindent
Vaughan Jones \cite{Jones:94} (see also \cites{HR:05,BH1,BH2})
introduced the \emph{orbit} basis $\{o_d \mid d \in \Ptn_k \}$ of
$\Ptn_k(\delta)$, defined as follows.  Given $k$-diagrams $d_1$, $d_2$
write
\begin{equation}\label{e:order}
\text{$d_1 \le d_2 \iff$ each block of $d_1$ is contained in some block
  of $d_2$.}
\end{equation}
The relation $\le$ is a partial order on the set $\Ptn_k$ of
$k$-diagrams. The orbit basis $\{o_d \mid d \in \Ptn_k \}$ is defined
by demanding that the unitriangular relation $d = \sum_{d \le d'}
o_{d'}$ hold for every $k$-diagram $d$.

A different basis $\{\bar{d} \mid d \in \Ptn_k\}$, also in a unitriangular
relation with the diagram basis, is defined by setting
\begin{equation}\label{e:bar}
  \bar{d} = \textstyle \sum_{d' \le d} \; (-1)^{\beta(d)-\beta(d')} \,
  d^{\,\prime}
\end{equation}
for any $k$-diagram $d$, where $\beta(d)$ is the number of blocks of
$d$. This basis is the \emph{alternating} basis; it plays a crucial
role in this paper.

The blocks of a partial Brauer diagram all have cardinality at most
$2$, so blocks are either singletons (isolated vertices) or edges
(having two vertices as endpoints). For partial Brauer diagrams
$d$, $d'$ the relation $d \le d'$ defined in \eqref{e:order} holds if
and only if all the edges of $d$ are also edges of $d'$.
Equivalently, $d \le d'$ if and only if $d$ is obtainable from $d'$ by
excising zero or more of its edges.

If $d$ is a partial Brauer diagram then any term in the right hand
side of the expansion \eqref{e:bar} is (up to sign) also a partial
Brauer diagram. The same holds for planar partial Brauer diagrams
(that is, Motzkin diagrams).  Hence, $\bar{d} \in
\PB_k(\delta,\delta')$ for any partial Brauer diagram $d$, and
similarly $\bar{d} \in \M_k(\delta,\delta')$ for any planar partial
Brauer diagram $d$.

\begin{lem}\label{l:bases}
For any unital commutative ring $\Bbbk$ and any $\delta$, $\delta' \in
\Bbbk$, the sets
\[
\{\bar{d} \mid d \text{ a partial Brauer diagram}\},\quad 
\{\bar{d} \mid d \text{ a Motzkin diagram}\}
\]
are bases of $\PB_k(\delta,\delta')$, $\M_k(\delta,\delta')$
respectively.
\end{lem}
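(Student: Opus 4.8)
The plan is to deduce both statements from the general fact, already recorded in \eqref{e:bar}, that $\{\bar d \mid d \in \Ptn_k\}$ is unitriangularly related to the diagram basis $\{d \mid d \in \Ptn_k\}$ of $\Ptn_k(\delta)$, and hence is itself a $\Bbbk$-basis of $\Ptn_k(\delta)$. First I would make precise the triangularity: order the set $\Ptn_k$ by the partial order $\le$ of \eqref{e:order} (or any linear extension of it), and observe that \eqref{e:bar} expresses $\bar d$ as $d$ plus a $\Bbbk$-linear combination of diagrams $d' \slessdom d$ (strictly smaller, since $\beta(d')>\beta(d)$ whenever $d' < d$, an edge having been excised). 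The change-of-basis matrix from $\{d\}$ to $\{\bar d\}$ is therefore unitriangular with $\pm 1$ entries over $\Z$, so it is invertible over any $\Bbbk$; this gives the classical statement that $\{\bar d \mid d \in \Ptn_k\}$ is a basis of $\Ptn_k(\delta)$, with no hypothesis on $\delta$.

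Next I would restrict to the subsets of interest. Let $\mathcal{PB}_k \subseteq \Ptn_k$ be the set of partial Brauer $k$-diagrams and $\mathcal{M}_k \subseteq \mathcal{PB}_k$ the set of Motzkin (planar partial Brauer) $k$-diagrams. The key combinatorial observation, already isolated in the paragraph preceding the lemma, is that $\le$ restricts well to these subsets: if $d \in \mathcal{PB}_k$ and $d' \le d$, then $d'$ is obtained from $d$ by excising edges, so $d' \in \mathcal{PB}_k$ as well; and excising edges from a planar diagram leaves a planar diagram, so the same holds for $\mathcal{M}_k$. Consequently the subsets $\mathcal{PB}_k$ and $\mathcal{M}_k$ are \emph{downward closed} in $(\Ptn_k, \le)$. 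It follows that for $d \in \mathcal{PB}_k$ every term $d'$ appearing in \eqref{e:bar} lies in $\mathcal{PB}_k$, so $\bar d \in \PB_k(\delta,\delta')$; likewise $\bar d \in \M_k(\delta,\delta')$ for $d \in \mathcal{M}_k$. (These containments are exactly what the excerpt asserts just before the lemma, so I may simply cite them.)

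Finally I would assemble the conclusion. Since $\mathcal{PB}_k$ is downward closed, the unitriangular change-of-basis matrix of $\Ptn_k(\delta)$, when restricted to rows and columns indexed by $\mathcal{PB}_k$, is block-triangular with the $\mathcal{PB}_k \times \mathcal{PB}_k$ block still unitriangular (the entries of $\bar d$ outside $\mathcal{PB}_k$ vanish, so no "leakage" occurs). Hence $\{\bar d \mid d \in \mathcal{PB}_k\}$ is obtained from the diagram basis $\{d \mid d \in \mathcal{PB}_k\}$ of $\PB_k(\delta,\delta')$ by an invertible unitriangular matrix over $\Bbbk$, and is therefore a $\Bbbk$-basis of $\PB_k(\delta,\delta')$. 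The identical argument with $\mathcal{M}_k$ in place of $\mathcal{PB}_k$, using that $\mathcal{M}_k$ is downward closed and that $\{d \mid d \in \mathcal{M}_k\}$ is the diagram basis of $\M_k(\delta,\delta')$, shows $\{\bar d \mid d \in \mathcal{M}_k\}$ is a $\Bbbk$-basis of $\M_k(\delta,\delta')$.

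There is no serious obstacle here: the only thing to verify with any care is that the two subsets are downward closed under $\le$, which is immediate from the "excising edges" reformulation of $\le$ on partial Brauer diagrams, and the observation that planarity is preserved under excising edges. Everything else is the standard principle that a unitriangular change of basis restricts to any downward-closed subset of the index poset. I would phrase the write-up so that it works verbatim for any intermediate span as well — e.g.\ the Brauer or Temperley--Lieb subalgebras are \emph{not} downward closed (excising an edge from a perfect matching need not give a perfect matching), which is precisely why the lemma is stated only for $\PB_k$ and $\M_k$; it may be worth a one-sentence remark to that effect.
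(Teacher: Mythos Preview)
Your argument is correct and is essentially the same as the paper's: both rely on the unitriangularity of the transition from $\{d\}$ to $\{\bar d\}$, together with the observation (recorded in the paragraph before the lemma) that the partial Brauer and Motzkin diagram sets are downward closed under $\le$, so the expansion \eqref{e:bar} stays inside the respective subalgebra. Your write-up is simply a more explicit version of the paper's one-line proof, and your closing remark about why the Brauer and Temperley--Lieb cases fail is a nice aside but not needed for the lemma itself.
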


\begin{proof}
By the remarks preceding the lemma, the transition matrix expressing
the $\bar{d}$ in terms of the diagram basis in each algebra is 
unitriangular with respect to any linear order extending $\le$.
\end{proof}

For any invertible $\delta' \in \Bbbk$, by \cites{MM,DG:twin} there is
an algebra isomorphism
\begin{equation}\label{e:1-iso}
  \PB_k(\delta,\delta') \cong \PB_k(\delta,1) 
\end{equation}
defined by replacing the generator $p_i$ by $p_i/\delta'$.  Thus,
there is no loss of generality in setting $\delta' = 1$, so from now
on we work in $\PB_k(\delta,1)$ and in its subalgebra
$\M_k(\delta,1)$.  This is convenient because $p_i$, $1-p_i$ become a
pair of commuting orthogonal idempotents.

For a given partial Brauer diagram $d$, consider the subsets
$\bott{d}$, $\topp{d}$ of $[k]$ (the \emph{bottom}, \emph{top frame},
respectively, of $d$) defined by
\begin{align*}
\bott{d} &= \{i \in [k] \mid \text{vertex $i'$ is non-isolated in $d$} \},\\
\topp{d} &= \{i \in [k] \mid \text{vertex $i$ is non-isolated in $d$} \}.
\end{align*}
Elements of the set $\bott{d}' \cup \topp{d}$ form the \emph{frame} of
$d$ and label the endpoints of the edges in $d$; its complement in
$[k]' \cup [k]$ labels the isolated vertices in~$d$.

\begin{prop}\label{p:prod-form}
Let $d$ be a partial Brauer diagram. The identities:
\begin{enumerate}
\item $\bar{d} = \prod_{i \in \topp{d}} (1-p_i) \; d \prod_{i' \in
  \bott{d}} (1-p_i)$
\item $\bar{d} = \rho_0(d) - \rho_1(d) + \rho_2(d) - \rho_3(d) + \cdots$
\end{enumerate}
hold in $\PB_k(\delta,1)$, where $\rho_i(d)$ is the sum of all
diagrams obtained from $d$ by removing exactly $i$ of its edges. If
$d$ is planar (i.e., a Motzkin diagram) then all terms on the right
hand side of the identities are also planar, hence belong to
$\M_k(\delta,1)$.
\end{prop}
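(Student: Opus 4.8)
The plan is to prove (b) first and then deduce (a), since (b) is just a repackaging of the definition \eqref{e:bar} while (a) requires understanding how right- and left-multiplication by the idempotents $1-p_i$ acts on $d$.

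\textbf{Proof of (b).} By definition \eqref{e:bar}, $\bar d = \sum_{d'\le d}(-1)^{\beta(d)-\beta(d')}d'$, where the sum is over all partial Brauer diagrams $d'\le d$. As recalled just before Lemma~\ref{l:bases}, for partial Brauer diagrams $d'\le d$ means precisely that $d'$ is obtained from $d$ by excising some subset of its edges (turning both endpoints of each excised edge into isolated vertices). If $d$ has $m$ edges and $d'$ is obtained by deleting a set $S$ of $|S|=i$ of them, then $\beta(d')-\beta(d)=i$: each deleted edge is replaced by two singleton blocks, a net gain of one block per edge. Hence $(-1)^{\beta(d)-\beta(d')}=(-1)^i$, and grouping the terms of \eqref{e:bar} according to the number $i$ of removed edges gives exactly $\bar d=\sum_{i\ge 0}(-1)^i\rho_i(d)$, which is (b). (The sum terminates at $i=m=|\edges(d)|$.) The planarity remark is immediate: excising edges from a planar diagram leaves a planar diagram, so every $\rho_i(d)$, and hence $\bar d$, lies in $\M_k(\delta,1)$ when $d$ is a Motzkin diagram.

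\textbf{Proof of (a).} Since $p_i^2=p_i$ in $\PB_k(\delta,1)$, the element $1-p_i$ is an idempotent, and distinct $p_i$'s commute, so $\prod_{i\in\topp d}(1-p_i)$ and $\prod_{i'\in\bott d}(1-p_i)$ are well-defined idempotents. The key computation is the effect of these on the diagram $d$. First I note that for any partial Brauer diagram $d$ and any $j\in[k]$, left-multiplication $p_j\,d$ either (if vertex $j$ is isolated in $d$) equals $d$ again, or (if $j$ is an endpoint of an edge) equals the diagram $d$ with that edge severed at the top — i.e.\ the edge $\{j,x\}$ of $d$ is removed and replaced by singletons $\{j\}$, $\{x\}$ — possibly times a power of $\delta$ coming from a loop; but in fact capping vertex $j$ with the isolated-vertex pair $p$ cannot create a loop (it only breaks an edge), so $p_j\,d$ is exactly that severed diagram, with no $\delta$ factor. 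Therefore $(1-p_j)d = d - (\text{$d$ with the edge at }j\text{ severed at top})$ when $j\in\topp d$. Iterating: expanding $\prod_{i\in\topp d}(1-p_i)\,d$ over subsets $T\subseteq\topp d$ gives $\sum_{T}(-1)^{|T|}(\text{$d$ with the edges meeting }T\text{ severed at top})$. One must be slightly careful that severing several edges at the top is order-independent and that severing an edge at its top endpoint "uses up" that endpoint; since each top vertex lies on at most one edge, subsets $T$ of $\topp d$ biject with subsets of the edges of $d$ that have been severed at the top. Doing the same on the right with $\bott d$ and combining, $\prod_{i\in\topp d}(1-p_i)\,d\,\prod_{i'\in\bott d}(1-p_i)$ expands as an alternating sum over pairs (set of edges severed at top, set of edges severed at bottom), which is the same as an alternating sum over subsets of $\edges(d)$ counted \emph{with multiplicity} according to whether each chosen edge is severed at top, at bottom, or at both ends — but severing an edge at either end, or both, produces the \emph{same} diagram $d'$ with that edge deleted. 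An edge with both endpoints in the frame (all of them, by definition of the frame) can be severed in $3$ ways contributing signs $(-1)^1+(-1)^1+(-1)^2 = -1$; more precisely, an edge $\{j,x'\}$ with $j\in\topp d$, $x\in\bott d$ contributes the generating factor $\bigl(1 - t_j\bigr)\bigl(1-t'_x\bigr)$ where $t_j$, $t'_x$ are the "sever at top", "sever at bottom" operations, and since $t_j d = t'_x d = t_j t'_x d$ (all equal to $d$ with that edge removed), this factor acts as $1 - 1 = $ "$-$(delete this edge)" plus "$+d$" correctly, i.e.\ reproduces $1 - (\text{delete edge})$. Running this over all edges yields $\prod_{\text{edges }E}\bigl(1-\text{(delete }E)\bigr)\,d = \sum_{S\subseteq\edges(d)}(-1)^{|S|}(\text{$d$ with edges in $S$ deleted}) = \sum_i(-1)^i\rho_i(d)$, which equals $\bar d$ by (b).

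\textbf{Main obstacle.} The routine part is (b); the delicate point in (a) is the bookkeeping of multiplicities: because each edge of $d$ can be "hit" from the top factor, from the bottom factor, or from both, one must check that these three contributions collapse correctly to the single coefficient $-1$ on "delete that edge," and that no spurious $\delta$-powers appear (no interior loops are formed when multiplying by $p_j=1-$nothing... by $p_j$, since $p_j$ is a rank-one-ish "break" operation, not a "connect" operation). I would handle this cleanly by first proving the one-sided identity $\prod_{i\in\topp d}(1-p_i)\,d = \sum_{U\subseteq\edges(d)}(-1)^{|U|}(d\setminus U)$ — which is unambiguous since every top vertex meets at most one edge — then observing that right-multiplying the result by $\prod_{i'\in\bott d}(1-p_i)$ already annihilates nothing new: for each $d\setminus U$, its bottom frame is $\bott d$ minus the bottom endpoints of edges in $U$, so $(1-p_i)(d\setminus U) = d\setminus U$ for $i'$ a bottom endpoint of an edge in $U$, while for the remaining $i'\in\bott d$ it again severs. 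A short induction on $|\topp d|+|\bott d|$, or on $|\edges(d)|$, then closes the argument.
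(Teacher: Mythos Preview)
Your proof of (b) is correct and your main argument for (a) is essentially the paper's argument, run in the opposite order: the paper proves (a) first by expanding the product and observing that each $p_i$ deletes the edge incident to that vertex, obtaining the intermediate identity $\bar d=\sum_{d'\le d}(-1)^{\edges(d)-\edges(d')}d'$, and then reads off (b); you derive (b) directly from \eqref{e:bar} and then use it to identify the expanded product in (a). The key bookkeeping point---that an edge can be ``hit'' from two sides but the resulting three deletion terms collapse to a single $-1$---is exactly what the paper phrases as ``horizontal edges are seemingly removed twice, once for each endpoint, but in fact the second multiplication by the appropriate $p_i$ acts as the identity.'' You only spell this out for a through edge $\{j,x'\}$; the cup and cap cases are identical (both endpoints lie in the same product) and you should say so explicitly.

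However, the ``cleaner'' route you sketch in the final paragraph is wrong on two counts and would not close the argument. First, the one-sided identity
\[
\textstyle\prod_{i\in\topp d}(1-p_i)\,d \;=\; \sum_{U\subseteq\edges(d)}(-1)^{|U|}(d\setminus U)
\]
is false: caps have no top endpoint, so the left-hand product cannot delete them. The sum on the right must range only over edges with at least one top endpoint (through edges and cups). Second, your claim that $(d\setminus U)(1-p_i)=d\setminus U$ when $i'$ is a bottom endpoint of a deleted edge is exactly backwards: since $i'$ is now isolated in $d\setminus U$ and $\delta'=1$, one has $(d\setminus U)\,p_i=d\setminus U$, so $(d\setminus U)(1-p_i)=0$. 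Thus the right-hand product does \emph{not} act as the identity on the one-sided expansion; it genuinely interacts with it, and the correct accounting is precisely the ``$-1-1+1=-1$'' collapse you already carried out in your main argument. Drop the final paragraph and your proof stands.
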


\begin{proof}
(a) It follows from the definition \eqref{e:2mult} of diagram
  multiplication that left (resp., right) multiplication by $p_i$
  removes the edge with endpoint $i$ (resp., $i'$), for any $i \in
  \topp{d}$, $i' \in \bott{d}$.  Expanding the products on the left
  and right of identity (a) thus gives
\[
\bar{d} = \textstyle \sum_{d' \le d} (-1)^{\text{edges}(d)-\text{edges}(d')} d'
\]
where $\text{edges}(d)$ is the number of edges in $d$.  Horizontal
edges are seemingly removed twice, once for each endpoint, but in fact
the second multiplication by the appropriate $p_i$ acts as the
identity, thus doesn't matter.  For partial Brauer diagrams, the above
expansion coincides with the expression in \eqref{e:bar}.

(b) This follows from the displayed equation in the proof of (a).
\end{proof}

\begin{rmk}
The formula in part (b) of the proposition says that, for partial
Brauer diagrams $d$, the element $\bar{d}$ is obtained from $d$ by
\emph{inclusion-exclusion edge removal}. As $\rho_0(d)=d$, the leading
term in the expansion is $d$ itself.
\end{rmk}

An edge of a partial Brauer diagram $d$ is a \emph{cup} (resp.,
\emph{cap}) if both of its endpoints are in $[k]$ (resp., in $[k]'$).
Such edges are also called \emph{horizontal}. For a partial Brauer
diagram $d$, we define
\begin{equation}\label{e:tilde-hat}
\begin{aligned}
\tilde{d} &:=  \prod_{i \in \topp{d}_H} (1-p_i) \; d \prod_{i' \in
  \bott{d}_H} (1-p_i) \\
\hat{d} &:=  \prod_{i \in \topp{d}_V} (1-p_i) \; d \prod_{i' \in
  \bott{d}_V} (1-p_i)
\end{aligned}
\end{equation}
where $\topp{d}_H$, $\bott{d}_H$ respectively index the endpoints of
horizontal edges in the top, bottom rows of $d$, and similarly
$\topp{d}_V$, $\bott{d}_V$ respectively index the endpoints of
vertical edges in the top, bottom rows of $d$. Notice that either
product in the definition of $\hat{d}$ may be omitted without changing
the result.  The element $\tilde{d}$ (resp., $\hat{d}$) is obtained
from $d$ by \emph{inclusion-exclusion horizontal (resp., vertical)
edge removal}.  We linearly extend the notations $\bar{d}$,
$\tilde{d}$, $\hat{d}$ to linear combinations of diagrams.

\begin{prop}\label{prop:tilde-hat}
Let $d$ be a partial Brauer diagram. Then
\begin{enumerate}
\item $\bar{d} = \hat{\tilde{d}} = \tilde{\hat{d}}$
\item $\tilde{d} = \rho^H_0(d) - \rho^H_1(d) + \rho^H_2(d) - \cdots$
\item $\hat{d} = \rho^V_0(d) - \rho^V_1(d) + \rho^V_2(d) - \cdots$
\end{enumerate}
where $\rho^H_i(d)$ (resp., $\rho^V_i(d)$) is the sum of all diagrams
obtained from $d$ by removing $i$ of its horizontal (resp., vertical)
edges. Hence the sets
\[
\{\tilde{d} \mid d \in \PB_k\}, \quad \{\tilde{d} \mid d \in \M_k\}
\]
are $\Bbbk$-bases of $\PB_k(\delta,1)$, $\M_k(\delta,1)$, respectively.
\end{prop}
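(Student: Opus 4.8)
The plan is to work throughout in $\PB_k(\delta,1)$ (and in its subalgebra $\M_k(\delta,1)$), exploiting that $p_1,\dots,p_k$ — hence $1-p_1,\dots,1-p_k$ — are pairwise commuting idempotents, and that, as established in the proof of Proposition~\ref{p:prod-form}, left (resp.\ right) multiplication by $p_i$ deletes the edge incident to the top vertex $i$ (resp.\ the bottom vertex $i'$), isolating \emph{both} of its endpoints. Since every cup in the top row of $d$ has both endpoints in $\topp{d}_H$ and every cap in the bottom row has both endpoints in $\bott{d}_H$, I would expand the products $\prod_{i\in\topp{d}_H}(1-p_i)$ and $\prod_{i'\in\bott{d}_H}(1-p_i)$ occurring in the definition of $\tilde d$ and run exactly the inclusion--exclusion bookkeeping from the proof of Proposition~\ref{p:prod-form}(b), in which the apparent ``double removal'' of a horizontal edge (once for each endpoint) collapses because the second $p_i$ acts as the identity on a diagram in which $i$ is already isolated. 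This yields part~(b): $\tilde d$ equals the alternating sum of the diagrams obtained from $d$ by deleting subsets of its horizontal edges, i.e.\ $\rho^H_0(d)-\rho^H_1(d)+\rho^H_2(d)-\cdots$. Part~(c) is the identical argument with ``horizontal'' replaced by ``vertical''.

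For part~(a) the key observation is that deleting horizontal edges from $d$ neither destroys nor creates vertical edges (a cup or cap has both endpoints in one row, and isolated vertices carry no edge), so every diagram $d'$ appearing in the expansion of $\tilde d$ from~(b) has the same set of vertical edges as $d$; in particular $\topp{d'}_V=\topp{d}_V$ and $\bott{d'}_V=\bott{d}_V$. Hence applying $\hat{(\cdot)}$ to the linear combination $\tilde d$ is simply left-multiplication by the \emph{fixed} element $\prod_{i\in\topp{d}_V}(1-p_i)$ together with right-multiplication by $\prod_{i'\in\bott{d}_V}(1-p_i)$. Substituting the definition of $\tilde d$, commuting the factors $1-p_i$, and using the disjoint decompositions $\topp{d}=\topp{d}_H\sqcup\topp{d}_V$ and $\bott{d}=\bott{d}_H\sqcup\bott{d}_V$, the product telescopes to $\prod_{i\in\topp{d}}(1-p_i)\,d\prod_{i'\in\bott{d}}(1-p_i)$, which is $\bar d$ by Proposition~\ref{p:prod-form}(a). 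Interchanging the roles of horizontal and vertical edges (deleting vertical edges likewise leaves the horizontal edge set unchanged) gives $\tilde{\hat d}=\bar d$ as well.

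For the basis assertion, part~(b) shows $\tilde d = d + (\text{a }\Bbbk\text{-combination of diagrams }d'\text{ with }d'<d)$ in the order~\eqref{e:order}, since every $d'$ obtained by removing at least one horizontal edge satisfies $d'<d$, while $\rho^H_0(d)=d$ occurs with coefficient $1$. Therefore the transition matrix between $\{\tilde d\}$ and the diagram basis is unitriangular with respect to any linear order refining $\le$, so it is invertible over $\Bbbk$; as removing horizontal edges preserves the property of being a partial Brauer diagram, and preserves planarity, it follows that $\{\tilde d \mid d\in\PB_k\}$ and $\{\tilde d \mid d\in\M_k\}$ are $\Bbbk$-bases of $\PB_k(\delta,1)$ and $\M_k(\delta,1)$, respectively.

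I expect the only genuine subtlety to be the bookkeeping in part~(a): one must argue carefully that the vertical-edge data is literally constant across the support of $\tilde d$, so that $\hat{(\cdot)}$ genuinely factors as multiplication by fixed elements on the two sides. Once that is in hand, everything reduces to formal manipulation of the commuting idempotents $1-p_i$ together with Proposition~\ref{p:prod-form}.
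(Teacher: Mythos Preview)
Your proof is correct and follows essentially the same line as the paper's: parts~(b) and~(c) by expanding the defining products, part~(a) by combining the disjoint decompositions $\topp{d}=\topp{d}_H\sqcup\topp{d}_V$, $\bott{d}=\bott{d}_H\sqcup\bott{d}_V$ with commutativity of the $1-p_i$. The only noteworthy difference is in the basis assertion: the paper applies $\tilde{(\cdot)}$ to both sides of part~(c) to obtain $\bar d=\tilde d-\tilde d_1+\tilde d_2-\cdots$ with each $d_i<d$, hence a unitriangular transition between $\{\tilde d\}$ and the $\{\bar d\}$ basis of Lemma~\ref{l:bases}, whereas you compare $\{\tilde d\}$ directly with the diagram basis via part~(b); both routes are valid, and yours is marginally more direct.
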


\begin{proof}
(a) follows from the product formula in Proposition
  \ref{p:prod-form}(a) and the fact that the $p_i$ pairwise commute.

(b), (c) follow by expanding the right hand side in \eqref{e:tilde-hat}.

Applying the operator $d\mapsto \tilde{d}$ to both sides of the
identity in part (c) shows that if $d$ is a partial Brauer diagram
then $\bar{d}$ is expressible as an alternating sum of the form
\[
\bar{d} = \tilde{d} - \tilde{d}_1 + \tilde{d}_2 - \cdots
\]
where each $d_i < d$. So the transition between the sets $\{\bar{d}
\mid d \in \PB_k \}$ and $\{\tilde{d} \mid d \in \PB_k \}$ is
unitriangular. The same holds if we restrict to Motzkin diagrams. The
final claim now follows from Lemma \ref{l:bases}.
\end{proof}

We will need to consider various subalgebras of the Motzkin algebra
$\M_k(\delta)$. Let $\RR_k$ be the subalgebra generated by $r_1,
\dots, r_{k-1}$ and $\LL_k$ the subalgebra generated by $l_1, \dots,
l_{k-1}$. Write $\RL_k$ for the subalgebra generated by $\RR_k$ and
$\LL_k$. Notice that $\tilde{d}=d$ for any $d$ in $\RL_k$, since such
$d$ have no horizontal edges.

\begin{thm}\label{t:bar-mult}
Suppose that $d_1$, $d_2$ are partial Brauer $k$-diagrams.  Let
$N_1(d_1,d_2)$ be the number of closed loops in the middle of
$\Gamma(d_1,d_2)$, as in equation~\eqref{e:2mult}, and set $d_3 = d_1
\circ d_2$, the product in the partition monoid~$\Ptn_k$.  Let
\[
\Omega(d_1,d_2) = \big( ([k]\setminus \bott{d_1}) \cap \topp{d_2}_H
\big) \cup \big( ([k]\setminus \topp{d_2}) \cap \bott{d_1}_H \big),
\]
the set indexing the vertices in the middle row of $\Gamma(d_1,d_2)$
for which an isolated vertex in one diagram is identified with a
horizontal edge endpoint in the other.  Then:
\begin{enumerate}
\item $\bar{d}_1 \, \bar{d}_2 =
  \begin{cases}
    (\delta-1)^{N_1(d_1,d_2)} \, \bar{d}_3 & \text{if $\bott{d_1} =
      \topp{d_2}$}\\
     \phantom{(\delta-1)} 0 & \text{otherwise}.
  \end{cases}
  $
  
\item $\tilde{d}_1 \, \tilde{d}_2 =
  \begin{cases}
  (\delta-1)^{N_1(d_1,d_2)} \, \prod_{i\in S} (1-p_i) \,\tilde{d}_3 &
    \text{if $\Omega(d_1,d_2)$ is empty}\\ \phantom{(\delta-1)} 0 &
    \text{otherwise}.
  \end{cases}
  $
\end{enumerate}
The set $S$ in formula \textup{(b)} is the set of indices on the top
vertex of any through edge that snakes through some cups and caps in
the middle row of $\Gamma(d_1,d_2)$ before emerging to connect to a
vertex in the bottom row.
\end{thm}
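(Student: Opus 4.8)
The plan is to derive part (a) from the product formula in Proposition~\ref{p:prod-form}(a), reducing the computation of $\bar d_1\,\bar d_2$ to a diagram-level statement in $\PB_k(\delta,1)$, and then obtain part (b) from part (a) together with the relation $\bar d=\hat{\tilde d}$ of Proposition~\ref{prop:tilde-hat}(a). First I would use Proposition~\ref{p:prod-form}(a) to write $\bar d_i = \prod_{j\in\topp{d_i}}(1-p_j)\,d_i\,\prod_{j'\in\bott{d_i}}(1-p_j)$, so that
\[
\bar d_1\,\bar d_2 = \Big(\prod_{j\in\topp{d_1}}(1-p_j)\Big)\, d_1 \Big(\prod_{j'\in\bott{d_1}}(1-p_j)\Big)\Big(\prod_{j\in\topp{d_2}}(1-p_j)\Big)\, d_2 \Big(\prod_{j'\in\bott{d_2}}(1-p_j)\Big).
\]
The middle factor is $\prod_{j\in\bott{d_1}\cup\topp{d_2}}(1-p_j)$ since the $p_j$ are commuting orthogonal idempotents. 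Now I compute $d_1\big(\prod_{j}(1-p_j)\big)d_2$ directly at the diagram level: multiplying $d_1$ on the right and $d_2$ on the left by the idempotent factors $(1-p_j)$ kills any diagram in which the $j$-th middle vertex meets a through-strand on either side, so the surviving composite is forced to have $\bott{d_1}=\topp{d_2}$ (otherwise some factor $1-p_j$ is absorbed by a through-strand it must annihilate, giving $0$); in the surviving case each interior closed loop in $\Gamma(d_1,d_2)$ contributes a factor $\delta$ while each interior single-vertex path contributes a factor $\delta'=1$, and one checks that the loops here each used to meet the frame in a way that is now cut — I would rather argue that the $(1-p_j)$'s force the interior components to be \emph{either} closed loops \emph{or} the isolated-vertex paths that were already going to be discarded, and the closed loops contribute $\delta-1$ rather than $\delta$ because each closed loop, when one of its vertices is hit by $1-p_j=1-p_j$, splits as (loop with coefficient $\delta$) minus (broken loop, an interior path, coefficient $1$). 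Combining, $d_1\prod_j(1-p_j)\,d_2 = (\delta-1)^{N_1(d_1,d_2)}\,d_3$ whenever $\bott{d_1}=\topp{d_2}$, and $0$ otherwise; reinstating the outer idempotent products and invoking Proposition~\ref{p:prod-form}(a) once more (the outer frames of $d_3$ are exactly $\topp{d_1}$ and $\bott{d_2}$, since only through-strands survive) yields $\bar d_1\,\bar d_2=(\delta-1)^{N_1(d_1,d_2)}\,\bar d_3$, which is part (a).

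For part (b) I would start from $\tilde d_i = \prod_{j\in\topp{d_i}_H}(1-p_j)\,d_i\,\prod_{j'\in\bott{d_i}_H}(1-p_j)$ and expand $\tilde d_1\,\tilde d_2$ the same way. This time the middle idempotent factor is $\prod_{j\in\bott{d_1}_H\cup\topp{d_2}_H}(1-p_j)$, which only removes edges that are endpoints of \emph{horizontal} edges in the adjacent diagram. The key combinatorial observation is that $(1-p_j)$ sitting between the two diagrams annihilates the composite precisely when vertex $j$ in the middle row is a horizontal-edge endpoint on one side and a \emph{through-strand} endpoint on the other — which is exactly the condition that $j\in\Omega(d_1,d_2)$; hence the product is $0$ iff $\Omega(d_1,d_2)\ne\varnothing$. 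When $\Omega(d_1,d_2)=\varnothing$, every middle vertex $j$ that is hit by a factor $1-p_j$ is a horizontal-edge endpoint on \emph{both} sides, so the $(1-p_j)$ acts as identity there (just as the ``second multiplication'' remark in the proof of Proposition~\ref{p:prod-form}(a)), except where a cup of $d_1$ mates with a cap of $d_2$ to close a loop, each such loop again contributing $\delta-1$ instead of $\delta$ for the same split-the-loop reason. This gives the $(\delta-1)^{N_1(d_1,d_2)}$ factor and leaves a residual product $\prod_{i\in S}(1-p_i)$ on those through-strands of $d_3$ whose top endpoint snaked through horizontal edges in the middle row before emerging — because along such a strand the $(1-p_j)$ factor it passed through does \emph{not} get absorbed, but instead propagates to the top endpoint of the resulting through-strand. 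I would make this precise by tracking, in $\Gamma(d_1,d_2)$, each through-component of $d_3$: if it touches the middle row only at through-strand endpoints of both diagrams, all the relevant $(1-p_j)$ act trivially; if it touches the middle row at a horizontal-edge endpoint, then (since $\Omega$ is empty) that endpoint is horizontal on \emph{both} sides, the strand genuinely ``snakes,'' and a surviving $(1-p_i)$ lands on its top. Collecting the surviving outer $(1-p_j)$'s plus these newly-produced ones, and recognizing $\prod_{j\in\topp{d_3}_H}(1-p_j)\,d_3\,\prod(1-p_j)=\tilde d_3$ absorbs the horizontal part, leaves exactly $\prod_{i\in S}(1-p_i)\,\tilde d_3$.

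The main obstacle is the bookkeeping in part (b): correctly matching the surviving idempotent factors with the set $S$ and verifying that the snaking through-strands produce \emph{exactly} one $(1-p_i)$ at their top endpoint and nowhere else. I would handle this by a careful case analysis on the types of the two endpoints of each middle-row vertex $j\in\bott{d_1}_H\cup\topp{d_2}_H$ in $\Gamma(d_1,d_2)$: (i) horizontal on both sides — $(1-p_j)$ acts as identity (possibly closing a loop, giving $\delta-1$); (ii) horizontal on one side, through on the other — this is the $\Omega$ case, giving $0$; (iii) $j$ not in that index set at all — nothing happens. Case (i) is where the snaking happens, and I would argue inductively along each through-component that the net effect of passing through a chain of such vertices is a single $(1-p_i)$ at the emerging top vertex, using the idempotent identity $(1-p_i)p_i=0$ and $(1-p_i)^2=1-p_i$ at each step. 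A secondary subtlety is confirming that the leading (maximal) diagram bookkeeping is consistent — i.e. that $\prod_{i\in S}(1-p_i)\,\tilde d_3$, re-expanded, indeed has $d_3$ (or the appropriate maximal balanced diagram) as leading term — but this is immediate from Proposition~\ref{prop:tilde-hat}(b) and the fact that $\rho^H_0$ of everything in sight is the diagram itself. Part (a) I expect to be routine once the ``broken loop'' mechanism for the $\delta\mapsto\delta-1$ shift is cleanly stated.
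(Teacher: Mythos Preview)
Your overall strategy---writing $\bar d_i$ and $\tilde d_i$ via the product formulas of Proposition~\ref{p:prod-form}(a) and equation~\eqref{e:tilde-hat}, collapsing the middle idempotents into a single product over $\bott{d_1}\cup\topp{d_2}$ (resp.\ $\bott{d_1}_H\cup\topp{d_2}_H$), and then tracking how each $(1-p_j)$ interacts with the composite $\Gamma(d_1,d_2)$---is exactly the paper's approach, and your explanation of why each closed middle loop contributes $\delta-1$ rather than $\delta$ is correct.

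However, you have the annihilation mechanism backwards, and this derails your case analysis in both parts. The middle factor $(1-p_j)$ does \emph{not} kill a through-strand; it kills the product precisely when vertex $j$ is \emph{isolated} on the other side, because then $p_j$ acts as the identity there (e.g.\ if $j\notin\bott{d_1}$ then $d_1p_j=d_1$, so $d_1(1-p_j)=0$). That is the mechanism in part~(a), and it immediately yields $\bott{d_1}=\topp{d_2}$ as the nonvanishing condition.

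The same confusion corrupts your part~(b). The set $\Omega(d_1,d_2)$ records middle vertices where a horizontal-edge endpoint meets an \emph{isolated} vertex, not a through-strand as you write; that is what makes the product vanish. The configuration you call case~(ii)---horizontal on one side, through on the other---is in fact the \emph{snaking} case that produces the residual $\prod_{i\in S}(1-p_i)$: if a through-edge of $d_1$ runs from top vertex $m$ down to $j'$, then $d_1(1-p_j)=(1-p_m)d_1$, and the emerging $(1-p_m)$ on the left is exactly the contribution to $S$. So your cases are permuted: horizontal-meets-isolated gives zero ($\Omega$); horizontal-meets-through pushes a surviving $(1-p_i)$ to the top endpoint of the resulting through-edge ($S$); and horizontal-meets-horizontal propagates the idempotent further along the middle path, leading either to a loop (factor $\delta-1$), to a new cup or cap of $d_3$ already absorbed by the outer $\tilde{\ }$-factors, or eventually to one of the previous cases. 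Once you correct this bookkeeping your argument goes through and coincides with the paper's.
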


\begin{proof}
(a) The proof is based on the formula in Proposition \ref{p:prod-form}(a).
First suppose that $\bott{d_1}$ does not match $\topp{d_2}$. Then
there must be at least one isolated vertex that matches up with the
endpoint of some edge. We can always insert a copy of $p_i$
corresponding to that vertex, as multiplication by $p_i$ is identity
on an isolated vertex in the $i$th position. This shows that the
product $\bar{d}_1\; \bar{d}_2 = 0$.

From now on, suppose that $\bott{d_1}$ matches $\topp{d_2}$.  There
are four cases to consider. First, suppose that two propagating edges
meet in the middle row of $\Gamma(d_1,d_2)$ at the $i$th identified
vertex. Then the idempotent $1-p_i$ can be commuted to both sides of
$d_1  d_2$, as $1-p_j$ on the left and $1-p_m$ on the right,
where the corresponding edge in $d_3 = d_1d_2$ connects the $j$th
vertex on the top row to the $m$th on the bottom.

Now suppose that there is an added cup in $d_3$ that is not
present in $d_1$. This means two propagating edges in $d_1$ join up
with a connected path in the middle of $\Gamma(d_1,d_2)$ to form that
additional cup. In this situation, we can commute all the interior
idempotents on the path up to the top (to the left of $d_3$).

The case of an added cap in $d_3$ that is not present in $d_2$ is
analogous to the previous case.

It remains only to consider loops in the middle row of
$\Gamma(d_1,d_2)$. For each such loop, the product of the middle
idempotents is equivalent to multiplication by the scalar
$\delta-1$. This completes the proof of (a).

(b) The proof of part (b) is similar to that for part (a), based on
the formula in Proposition~\ref{prop:tilde-hat}(b).
\end{proof}

For example, if $k=3$ one may verify that $\tilde{e}_1 \tilde{e}_2$ is
equal to the $8$-term linear combination
\[
\begin{aligned} \tilde{e}_1 \tilde{e}_2 = \;
\shade{ \begin{tikzpicture}[scale = 0.35,thick, baseline={(0,-1ex/2)}] 
\tikzstyle{vertex} = [shape = circle, minimum size = 4pt, inner sep = 1pt] 
\node[vertex] (G--3) at (3.0, -1) [shape = circle, draw,fill=black] {}; 
\node[vertex] (G--2) at (1.5, -1) [shape = circle, draw,fill=black] {}; 
\node[vertex] (G--1) at (0.0, -1) [shape = circle, draw,fill=black] {}; 
\node[vertex] (G-3) at (3.0, 1) [shape = circle, draw,fill=black] {}; 
\node[vertex] (G-1) at (0.0, 1) [shape = circle, draw,fill=black] {}; 
\node[vertex] (G-2) at (1.5, 1) [shape = circle, draw,fill=black] {}; 
\draw[] (G--3) .. controls +(-0.5, 0.5) and +(0.5, 0.5) .. (G--2); 
\draw[] (G-3) .. controls +(-1, -1) and +(1, 1) .. (G--1); 
\draw[] (G-1) .. controls +(0.5, -0.5) and +(-0.5, -0.5) .. (G-2); 
\end{tikzpicture}}
\;&-\; \shade{\begin{tikzpicture}[scale = 0.35,thick, baseline={(0,-1ex/2)}] 
\tikzstyle{vertex} = [shape = circle, minimum size = 4pt, inner sep = 1pt] 
\node[vertex] (G--3) at (3.0, -1) [shape = circle, draw,fill=black] {}; 
\node[vertex] (G--2) at (1.5, -1) [shape = circle, draw,fill=black] {}; 
\node[vertex] (G--1) at (0.0, -1) [shape = circle, draw,fill=black] {}; 
\node[vertex] (G-3) at (3.0, 1) [shape = circle, draw,fill=black] {}; 
\node[vertex] (G-1) at (0.0, 1) [shape = circle, draw,fill=black] {}; 
\node[vertex] (G-2) at (1.5, 1) [shape = circle, draw,fill=black] {}; 
\draw[] (G--3) .. controls +(-0.5, 0.5) and +(0.5, 0.5) .. (G--2); 
\draw[] (G-3) .. controls +(-1, -1) and +(1, 1) .. (G--1); 
\end{tikzpicture}}
\;-\; \shade{\begin{tikzpicture}[scale = 0.35,thick, baseline={(0,-1ex/2)}] 
\tikzstyle{vertex} = [shape = circle, minimum size = 4pt, inner sep = 1pt] 
\node[vertex] (G--3) at (3.0, -1) [shape = circle, draw,fill=black] {}; 
\node[vertex] (G--2) at (1.5, -1) [shape = circle, draw,fill=black] {}; 
\node[vertex] (G--1) at (0.0, -1) [shape = circle, draw,fill=black] {}; 
\node[vertex] (G-3) at (3.0, 1) [shape = circle, draw,fill=black] {}; 
\node[vertex] (G-1) at (0.0, 1) [shape = circle, draw,fill=black] {}; 
\node[vertex] (G-2) at (1.5, 1) [shape = circle, draw,fill=black] {}; 
\draw[] (G-3) .. controls +(-1, -1) and +(1, 1) .. (G--1); 
\draw[] (G-1) .. controls +(0.5, -0.5) and +(-0.5, -0.5) .. (G-2); 
\end{tikzpicture}}
\;+\; \shade{\begin{tikzpicture}[scale = 0.35,thick, baseline={(0,-1ex/2)}] 
\tikzstyle{vertex} = [shape = circle, minimum size = 4pt, inner sep = 1pt] 
\node[vertex] (G--3) at (3.0, -1) [shape = circle, draw,fill=black] {}; 
\node[vertex] (G--2) at (1.5, -1) [shape = circle, draw,fill=black] {}; 
\node[vertex] (G--1) at (0.0, -1) [shape = circle, draw,fill=black] {}; 
\node[vertex] (G-3) at (3.0, 1) [shape = circle, draw,fill=black] {}; 
\node[vertex] (G-1) at (0.0, 1) [shape = circle, draw,fill=black] {}; 
\node[vertex] (G-2) at (1.5, 1) [shape = circle, draw,fill=black] {}; 
\draw[] (G-3) .. controls +(-1, -1) and +(1, 1) .. (G--1); 
\end{tikzpicture}} \quad \\[1.0ex] \quad
\;&-\; \shade{\begin{tikzpicture}[scale = 0.35,thick, baseline={(0,-1ex/2)}] 
\tikzstyle{vertex} = [shape = circle, minimum size = 4pt, inner sep = 1pt] 
\node[vertex] (G--3) at (3.0, -1) [shape = circle, draw,fill=black] {}; 
\node[vertex] (G--2) at (1.5, -1) [shape = circle, draw,fill=black] {}; 
\node[vertex] (G--1) at (0.0, -1) [shape = circle, draw,fill=black] {}; 
\node[vertex] (G-1) at (0.0, 1) [shape = circle, draw,fill=black] {}; 
\node[vertex] (G-2) at (1.5, 1) [shape = circle, draw,fill=black] {}; 
\node[vertex] (G-3) at (3.0, 1) [shape = circle, draw,fill=black] {}; 
\draw[] (G--3) .. controls +(-0.5, 0.5) and +(0.5, 0.5) .. (G--2); 
\draw[] (G-1) .. controls +(0.5, -0.5) and +(-0.5, -0.5) .. (G-2); 
\end{tikzpicture}}
\;+\; \shade{\begin{tikzpicture}[scale = 0.35,thick, baseline={(0,-1ex/2)}] 
\tikzstyle{vertex} = [shape = circle, minimum size = 4pt, inner sep = 1pt] 
\node[vertex] (G--3) at (3.0, -1) [shape = circle, draw,fill=black] {}; 
\node[vertex] (G--2) at (1.5, -1) [shape = circle, draw,fill=black] {}; 
\node[vertex] (G--1) at (0.0, -1) [shape = circle, draw,fill=black] {}; 
\node[vertex] (G-1) at (0.0, 1) [shape = circle, draw,fill=black] {}; 
\node[vertex] (G-2) at (1.5, 1) [shape = circle, draw,fill=black] {}; 
\node[vertex] (G-3) at (3.0, 1) [shape = circle, draw,fill=black] {}; 
\draw[] (G--3) .. controls +(-0.5, 0.5) and +(0.5, 0.5) .. (G--2); 
\end{tikzpicture}}
\;+\; \shade{\begin{tikzpicture}[scale = 0.35,thick, baseline={(0,-1ex/2)}] 
\tikzstyle{vertex} = [shape = circle, minimum size = 4pt, inner sep = 1pt] 
\node[vertex] (G--3) at (3.0, -1) [shape = circle, draw,fill=black] {}; 
\node[vertex] (G--2) at (1.5, -1) [shape = circle, draw,fill=black] {}; 
\node[vertex] (G--1) at (0.0, -1) [shape = circle, draw,fill=black] {}; 
\node[vertex] (G-1) at (0.0, 1) [shape = circle, draw,fill=black] {}; 
\node[vertex] (G-2) at (1.5, 1) [shape = circle, draw,fill=black] {}; 
\node[vertex] (G-3) at (3.0, 1) [shape = circle, draw,fill=black] {}; 
\draw[] (G-1) .. controls +(0.5, -0.5) and +(-0.5, -0.5) .. (G-2); 
\end{tikzpicture}}
\;-\; \shade{\begin{tikzpicture}[scale = 0.35,thick, baseline={(0,-1ex/2)}] 
\tikzstyle{vertex} = [shape = circle, minimum size = 4pt, inner sep = 1pt] 
\node[vertex] (G--3) at (3.0, -1) [shape = circle, draw,fill=black] {}; 
\node[vertex] (G--2) at (1.5, -1) [shape = circle, draw,fill=black] {}; 
\node[vertex] (G--1) at (0.0, -1) [shape = circle, draw,fill=black] {}; 
\node[vertex] (G-1) at (0.0, 1) [shape = circle, draw,fill=black] {}; 
\node[vertex] (G-2) at (1.5, 1) [shape = circle, draw,fill=black] {}; 
\node[vertex] (G-3) at (3.0, 1) [shape = circle, draw,fill=black] {}; 
\end{tikzpicture}} 
\end{aligned} 
\]
illustrating the equality $\tilde{e}_1 \tilde{e}_2 = (1-p_3)\,
\widetilde{e_1e_2}$ in Theorem~\ref{t:bar-mult}(b).

\begin{cor}\label{c:tilde-cor}
If $x$ is a diagram in $\RL_k$ and $d$ is a partial Brauer diagram
then
\begin{enumerate}
\item $x\, \tilde{d} =
  \begin{cases}
    \widetilde{xd} & \text{if $\topp{d}_H \subset \bott{x}$}\\
    \; 0 &\text{otherwise}.
  \end{cases}
  $
\item $\tilde{d}\,x =
  \begin{cases}
    \widetilde{dx} & \text{if $\bott{d}_H \subset \topp{x}$}\\
    \;0 & \text{otherwise}.
  \end{cases}
  $
\end{enumerate}
\end{cor}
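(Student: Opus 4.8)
The plan is to deduce this corollary directly from Theorem~\ref{t:bar-mult}(b) by specializing to the case where one of the two diagrams lies in $\RL_k$. Since $x \in \RL_k$ has no horizontal edges, we have $\topp{x}_H = \bott{x}_H = \emptyset$; moreover $\tilde{x} = x$, as noted just before the theorem. So in part (a) we are computing $\tilde{x}\,\tilde{d}$, and in part (b) we are computing $\tilde{d}\,\tilde{x}$; in each case Theorem~\ref{t:bar-mult}(b) applies verbatim. The first step is to unwind what the vanishing condition ``$\Omega(x,d)$ empty'' and the scalar/idempotent factors become in this special case.

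For part (a), with $d_1 = x$ and $d_2 = d$, the set $\Omega(x,d)$ reduces to $\big(([k]\setminus\bott{x}) \cap \topp{d}_H\big) \cup \big(([k]\setminus\topp{d})\cap\bott{x}_H\big)$; the second summand is empty because $\bott{x}_H = \emptyset$, so $\Omega(x,d) = \emptyset$ if and only if $\topp{d}_H \subseteq \bott{x}$, which is exactly the case distinction in the statement. Next I must check that when this holds, the right-hand side of Theorem~\ref{t:bar-mult}(b) simplifies to $\widetilde{xd}$ with no leftover factors: since $x$ is a Brauer-type (in fact Motzkin) diagram with all blocks of size two and no horizontal edges, forming $\Gamma(x,d)$ introduces no closed loops in the middle row — every middle vertex that is a non-isolated endpoint in $x$ is an endpoint of a through (vertical) edge of $x$ — so $N_1(x,d) = 0$ and the scalar $(\delta-1)^{N_1}$ is $1$. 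It remains to argue that the set $S$ of Theorem~\ref{t:bar-mult}(b) is empty here: a through edge of $\Gamma(x,d)$ can only ``snake through cups and caps in the middle row'' if some cap of $d$ (a middle-row horizontal edge of $d$, from $d$'s top frame) is traversed, but every top endpoint of a cap of $d$ lies in $\topp{d}_H \subseteq \bott{x}$ and is therefore glued to a through edge of $x$, not left as a free strand to snake; a short combinatorial check shows the composite is again a Motzkin diagram with the through-edges of $x$ simply extended, so no index is added to $S$. Hence the product is exactly $\widetilde{xd}$. Part (b) is the mirror image: take $d_1 = d$, $d_2 = x$, use $\topp{x}_H = \emptyset$ to see $\Omega(d,x) = \emptyset \iff \bott{d}_H \subseteq \topp{x}$, and run the same loop-count and $S$-emptiness argument.

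The main obstacle I anticipate is the bookkeeping around the set $S$ and the claim that no idempotent factors $(1-p_i)$ survive — i.e. verifying cleanly that composition with a diagram in $\RL_k$ (which only has through strands and performs permutation-like reroutings) cannot create a through edge of the composite that passes through a horizontal edge in the middle row. This is intuitively clear from the picture but needs a careful case analysis of how a through edge of $x$ (respectively $d$) can link up with a through edge of $d$ (respectively $x$): the only interaction is endpoint-to-endpoint gluing of two vertical strands, producing a longer vertical strand, never a snaking one. Once that is pinned down, everything else is a direct specialization of the theorem, and the corollary follows. Alternatively, if one prefers to avoid invoking the full strength of the $S$-description, one can prove part (a) from scratch using Proposition~\ref{prop:tilde-hat}(b): expand $\tilde{d} = \sum_i (-1)^i \rho^H_i(d)$, multiply on the left by $x$ using \eqref{e:2mult}, observe that $x\rho^H_i(d)$ either vanishes (when a horizontal endpoint of $d$ is glued to an isolated vertex, which happens for every term precisely when $\topp{d}_H \not\subseteq \bott{x}$) or equals $\rho^H_i(xd)$ up to the sign and a loop factor that is $1$, and then resum to get $\widetilde{xd}$; part (b) is symmetric.
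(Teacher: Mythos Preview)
Your proof is correct and follows exactly the paper's approach: specialize Theorem~\ref{t:bar-mult}(b) using that $x \in \RL_k$ has no horizontal edges, so $\Omega$ reduces to the stated inclusion, no middle-row loops occur, and (as you argue more carefully than the paper itself does) the set $S$ is empty. One minor slip: diagrams in $\RL_k$ need not have all blocks of size two---they can have isolated vertices---but your actual reasoning does not rely on this and remains valid.
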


\begin{proof}
These formulas follow easily from part (b) of
Theorem~\ref{t:bar-mult}.  We prove formula (a). Since $x$ belongs to
$\RL_k$, it has no horizontal edges, so $\bott{x}_H$ is empty. This
means that $\Omega(x,d)$ is empty if and only if $\topp{d}_H \subset
\bott{x}$, and furthermore, there are no interior closed loops in
$\Gamma(x,d)$. Formula (a) thus follows. The proof of formula (b) is
symmetric.
\end{proof}

\section{The partial Temperley--Lieb algebra}\label{s:PTL}\noindent
We are now ready to define the main object of study for this paper.
Henceforth we set $\M_k(\delta) := \M_k(\delta,1)$.

We say that a Motzkin diagram is \emph{balanced} if it has the same
number of cups as caps. Equivalently, a Motzkin diagram is balanced if
the number of isolated vertices in each of its rows is the same.  Let
\begin{equation*}
  \D(k) := \{d \mid d \text{ is a balanced Motzkin $k$-diagram} \}.
\end{equation*}
We define the \emph{partial Temperley--Lieb algebra} $\PTL_k(\delta)$
to be the linear span of $\{ \tilde{d} \mid d \in \D(k) \}$. That
$\PTL_k(\delta)$ is a subalgebra of $\M_k(\delta)$ follows from
Theorem~\ref{t:bar-mult}.

\begin{thm}\label{t:bases}
Let $\Bbbk$ be a commutative ring and fix $\delta \in \Bbbk$. Then
either of the sets $\{\tilde{d} \mid d \in \D(k)\}$, $\{\bar{d} \mid d
\in \D(k)\}$ is a $\Bbbk$-basis of $\PTL_k(\delta)$.
\end{thm}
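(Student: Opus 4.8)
The plan is to derive both basis statements from the already-established facts in Proposition~\ref{prop:tilde-hat} together with the multiplication formula of Theorem~\ref{t:bar-mult}(b). By definition, $\PTL_k(\delta)$ is the $\Bbbk$-span of $\{\tilde{d} \mid d \in \D(k)\}$, so for that set the only thing to prove is linear independence. But Proposition~\ref{prop:tilde-hat} already tells us that $\{\tilde{d} \mid d \in \M_k\}$ is a $\Bbbk$-basis of the whole Motzkin algebra $\M_k(\delta,1)$; since $\D(k) \subseteq \M_k$, the subset $\{\tilde{d} \mid d \in \D(k)\}$ is automatically $\Bbbk$-linearly independent, being a subset of a basis. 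Hence it is a basis of its own span, which is $\PTL_k(\delta)$ by definition. So the $\tilde d$ claim is essentially immediate; I would state it in one or two sentences.

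For the $\{\bar{d} \mid d \in \D(k)\}$ claim, the strategy is to show that the transition between $\{\bar{d} \mid d \in \D(k)\}$ and $\{\tilde{d} \mid d \in \D(k)\}$ is unitriangular with respect to (a linear extension of) the partial order $\le$. From the proof of Proposition~\ref{prop:tilde-hat}, applying the operator $d \mapsto \hat{d}$ (inclusion--exclusion \emph{vertical} edge removal) to $\tilde d$ gives $\bar d = \hat{\tilde d}$, and more precisely $\bar{d} = \tilde{d} - \tilde{d}_1 + \tilde{d}_2 - \cdots$ where each $d_i < d$ is obtained from $d$ by deleting vertical edges. The key observation is that deleting a \emph{vertical} edge from a balanced Motzkin diagram $d$ leaves it balanced: removing a through-edge turns one top vertex and one bottom vertex into isolated vertices, so it increases the isolated-vertex count in each row by one, preserving the balance condition. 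Therefore every $d_i$ appearing in the expansion of $\bar d$ lies in $\D(k)$, and the expansion $\bar{d} = \sum_{d' \le d,\ d' \in \D(k)} (\pm 1)\, \tilde{d}^{\,\prime}$ takes place entirely within the index set $\D(k)$, with leading term $\tilde d$. Fixing any linear order on $\D(k)$ refining $\le$, the transition matrix from $\{\bar d\}$ to $\{\tilde d\}$ over $\D(k)$ is unitriangular, hence invertible over $\Bbbk$; so $\{\bar{d} \mid d \in \D(k)\}$ is also a $\Bbbk$-basis of $\PTL_k(\delta)$.

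The main point requiring care — the step I would flag as the crux — is the verification that the inclusion--exclusion vertical edge removal produces only \emph{balanced} diagrams, i.e.\ that $\D(k)$ is closed under deleting vertical edges; this is what makes the triangular system "internal" to $\D(k)$ rather than merely a relation inside $\M_k$. Once that combinatorial observation is in hand, everything else is bookkeeping: one also remarks in passing that $\bar d \in \M_k(\delta,1)$ for a Motzkin diagram $d$ (established before Lemma~\ref{l:bases}), so the elements $\bar d$ with $d \in \D(k)$ genuinely lie in $\PTL_k(\delta)$, which is needed before calling them a basis of it. I would write the proof in roughly four sentences: cite Proposition~\ref{prop:tilde-hat} for the $\tilde d$ case; note that vertical edge deletion preserves balance; conclude that $\bar d$ is a unitriangular combination of $\{\tilde{d'} \mid d' \in \D(k),\ d' \le d\}$; invoke unitriangularity to finish.
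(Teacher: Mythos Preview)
Your proposal is correct and follows essentially the same approach as the paper: linear independence inherited from the ambient Motzkin algebra, plus unitriangularity of the $\bar{d}$--$\tilde{d}$ transition restricted to $\D(k)$. You have organized it in the more natural order (handling $\tilde d$ first, since $\PTL_k(\delta)$ is \emph{defined} as its span) and you make explicit the key combinatorial point---that vertical-edge removal preserves balance---which the paper's proof leaves implicit when it asserts that $\{\bar d\mid d\in\D(k)\}$ spans.
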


\begin{proof}
The set $\{\bar{d} \mid d \in \D(k)\}$ is linearly independent by
Lemma~\ref{l:bases}. Since it spans the algebra, it is a
basis. Furthermore, as in the final paragraph of the proof of
Proposition~\ref{prop:tilde-hat}, the transition matrices between the
sets $\{\tilde{d} \mid d \in \D(k)\}$, $\{\bar{d} \mid d \in \D(k)\}$
are unitriangular, so $\{\tilde{d} \mid d \in \D(k)\}$ is also a
basis.
\end{proof}

Let $\D_n(k)$ be the subset of $\D(k)$ consisting of those balanced
Motzkin $k$-diagrams having exactly $n$ edges. Notice that $\D_k(k)$
is the set of Temperley--Lieb diagrams on $2k$ vertices.  To any $d
\in \D_n(k)$, we associate a triple $(A,t,B)$, where $t \in \D_n(n)$
is the unique Temperley--Lieb diagram on $2n$ vertices obtained by
deleting the isolated vertices in $d$, and $A$, $B$ are the subsets of
$[k]$ respectively indexing the non-isolated vertices in the top,
bottom row of $d$. For example,
\[
d = \;\shade{%
\begin{tikzpicture}[scale = 0.35,thick, baseline={(0,-1ex/2)}] 
\tikzstyle{vertex} = [shape = circle, minimum size = 4pt, inner sep = 1pt] 
\node[vertex] (G--6) at (7.5, -1) [shape = circle, draw,fill=black] {}; 
\node[vertex] (G--4) at (4.5, -1) [shape = circle, draw,fill=black] {}; 
\node[vertex] (G--5) at (6.0, -1) [shape = circle, draw,fill=black] {}; 
\node[vertex] (G--3) at (3.0, -1) [shape = circle, draw,fill=black] {}; 
\node[vertex] (G--2) at (1.5, -1) [shape = circle, draw,fill=black] {}; 
\node[vertex] (G-5) at (6.0, 1) [shape = circle, draw,fill=black] {}; 
\node[vertex] (G--1) at (0.0, -1) [shape = circle, draw,fill=black] {}; 
\node[vertex] (G-1) at (0.0, 1) [shape = circle, draw,fill=black] {}; 
\node[vertex] (G-4) at (4.5, 1) [shape = circle, draw,fill=black] {}; 
\node[vertex] (G-2) at (1.5, 1) [shape = circle, draw,fill=black] {}; 
\node[vertex] (G-3) at (3.0, 1) [shape = circle, draw,fill=black] {}; 
\node[vertex] (G-6) at (7.5, 1) [shape = circle, draw,fill=black] {}; 
\draw (G--6) .. controls +(-0.6, 0.6) and +(0.6, 0.6) .. (G--4); 
\draw (G-5) .. controls +(-1, -1) and +(1, 1) .. (G--2); 
\draw (G-1) .. controls +(0.7, -0.7) and +(-0.7, -0.7) .. (G-4); 
\end{tikzpicture}
}
\]
corresponds to the triple $(A,t,B)$ where $A = \{1,4,5\}$, $B =
\{2,4,6\}$, and
\[
t = e_1e_2 = \;\shade{%
\begin{tikzpicture}[scale = 0.35,thick, baseline={(0,-1ex/2)}] 
\tikzstyle{vertex} = [shape = circle, minimum size = 4pt, inner sep = 1pt] 
\node[vertex] (G--3) at (3.0, -1) [shape = circle, draw,fill=black] {}; 
\node[vertex] (G--2) at (1.5, -1) [shape = circle, draw,fill=black] {}; 
\node[vertex] (G--1) at (0.0, -1) [shape = circle, draw,fill=black] {}; 
\node[vertex] (G-3) at (3.0, 1) [shape = circle, draw,fill=black] {}; 
\node[vertex] (G-1) at (0.0, 1) [shape = circle, draw,fill=black] {}; 
\node[vertex] (G-2) at (1.5, 1) [shape = circle, draw,fill=black] {}; 
\draw (G--3) .. controls +(-0.5, 0.5) and +(0.5, 0.5) .. (G--2); 
\draw (G-3) .. controls +(-1, -1) and +(1, 1) .. (G--1); 
\draw (G-1) .. controls +(0.5, -0.5) and +(-0.5, -0.5) .. (G-2); 
\end{tikzpicture}
}
\]
in $\D_3(3)$. Since any diagram $d$ is reconstructible from its
triple, the following result is clear.

\begin{lem}\label{l:triples}
The map $d \mapsto (A,t,B)$ defines a bijection between $\D_n(k)$ and
the set of triples of the above form.
\end{lem}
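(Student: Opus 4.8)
The plan is to exhibit the inverse map explicitly and check that the two constructions are mutually inverse; the only point needing real (if brief) justification is that planarity is stable under deleting and inserting isolated vertices, everything else being bookkeeping.

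First I would verify that $d \mapsto (A,t,B)$ is well defined, i.e.\ that the output really is a triple of the stated form. Let $d \in \D_n(k)$ have $c$ cups, $c$ caps (equal in number since $d$ is balanced), and $v$ through edges, so that $n = 2c+v$. Counting endpoints in the top row gives $|\topp{d}| = 2c + v = n$, and likewise $|\bott{d}| = n$; thus $A := \topp{d}$ and $B := \bott{d}$ have size $n$. Deleting the $k-n$ isolated vertices from each row and relabelling the surviving vertices $1,\dots,n$ from left to right in each row yields a planar diagram $t$ on $2n$ vertices having $n$ edges in which every vertex is an endpoint of a (necessarily unique) edge, i.e.\ a planar perfect matching of the two rows — a Temperley--Lieb diagram on $2n$ vertices. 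Such a diagram is automatically balanced: if it has $c'$ cups and $c''$ caps, then $n - 2c'$ equals the number of through strands, counted from the top, which equals $n - 2c''$, counted from the bottom, so $c' = c''$. Hence $t \in \D_n(n)$.

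Next I would describe the inverse. Given a triple $(A,t,B)$ with $A,B \subseteq [k]$, $|A| = |B| = n$, and $t \in \D_n(n)$, let $\alpha \colon [n] \to A$ and $\beta \colon [n] \to B$ be the order-preserving bijections. Build a $k$-diagram $d$ on $[k] \cup [k]'$ by declaring the top vertices indexed by $[k]\setminus A$ and the bottom vertices indexed by $[k]\setminus B$ to be isolated, and, for each edge of $t$, inserting the corresponding edge among the vertices indexed by $A \cup B'$, transported via $\alpha$, $\beta$. This $d$ is a partial Brauer $k$-diagram with $n$ edges; it is balanced because its cups (resp.\ caps) are in bijection with those of $t$, which are equal in number. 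It is planar: draw $t$ inside a horizontally shrunken sub-rectangle and stretch it back to the full rectangle, placing each inserted isolated vertex on the top or bottom edge, where it obstructs nothing — more formally, an order-preserving insertion of isolated points into the two rows of a planar diagram creates no crossing. Therefore $d \in \D_n(k)$, with $\topp{d} = A$ and $\bott{d} = B$.

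Finally I would observe that the two constructions are mutually inverse. Starting from $d \in \D_n(k)$: deleting its isolated vertices and then reinserting isolated vertices at the complements of its top and bottom frames returns $d$, since the order-preserving relabelling used in one direction is undone by the one used in the other. Starting from a triple $(A,t,B)$: the round trip recovers $A$ and $B$ as the top and bottom frames of the constructed $d$ and $t$ as the diagram obtained by deleting its isolated vertices. The main (mild) obstacle is the stability of planarity under these two operations, which is the step I would take care to justify cleanly; all remaining verifications follow at once from the definitions of the two maps.
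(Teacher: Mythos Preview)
Your proof is correct and follows the same approach as the paper, which simply remarks that ``any diagram $d$ is reconstructible from its triple'' and declares the result clear without further argument. You have carefully spelled out the inverse map and the well-definedness checks (including planarity preservation) that the paper leaves implicit.
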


From now on, write $d(A,t,B)$ for the $k$-diagram in $\D_n(k)$
corresponding in the above lemma to a given triple $(A,t,B)$ such that
$A$, $B$ are subsets of $[k]$ of cardinality $n$ and $t \in
\D_n(n)$. In other words, the map $(A,t,B) \mapsto d(A,t,B)$ is the
inverse of the bijection in Lemma~\ref{l:triples}.

The cardinality of the set $\TL_n$ of Temperley--Lieb $n$-diagrams is
equal to the $n$th Catalan number $\mathcal{C}_n = \frac{1}{n+1}
\binom{2n}{n}$.  As $\D(k) = \bigcup_{n=0}^k \D_n(k)$ (disjoint union),
by combining Lemma~\ref{l:triples} with Theorem~\ref{t:bases}, we have
\begin{equation}\label{e:PTL-dim}
\rank_\Bbbk \PTL_k(\delta) = |\D(k)| = \sum_{n=0}^k \binom{k}{n}^2
\mathcal{C}_n \, .
\end{equation}

If $t \in \D_n(n)$ is a Temperley--Lieb $n$-diagram, there are two
equally natural ways to extend $t$ to a diagram in $\D_n(k)$, for any
$k \ge n$. Either of the maps
\begin{equation}\label{e:extend}
t \mapsto t \otimes \omega_{k-n}, \qquad t \mapsto t \otimes 1_{k-n}
\end{equation}
will do the job, where $\omega_j$ is the $j$-diagram in which all $2j$
vertices are isolated. The following observation is an immediate
consequence of Theorem~\ref{t:bar-mult}(a).

\begin{lem}\label{l:TL-iso}
Write $t_0 = t \otimes \omega_{k-n}$, $t_1 = t \otimes 1_{k-n}$ for
the image of $t$ in $\D_n(n)$ under the map \eqref{e:extend}. For any
$n \le k$, either of the linear maps such that $t \mapsto \bar{t}_0$,
$t \mapsto \bar{t}_1$ defines an algebra isomorphism of
$\TL_n(\delta-1)$ with a subalgebra of $\M_k(\delta)$.
\end{lem}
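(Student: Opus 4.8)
The plan is to verify directly, for each of the two choices indexed by $\e\in\{0,1\}$, that the linear map $\phi_\e\colon \TL_n(\delta-1)\to \M_k(\delta)$ determined on the diagram basis $\TL_n=\D_n(n)$ of $\TL_n(\delta-1)$ by $t\mapsto \bar t_\e$ (where $t_0:=t\otimes\omega_{k-n}$ and $t_1:=t\otimes 1_{k-n}$) is an injective homomorphism of algebras; its image is then the subalgebra asserted in the statement, and $\phi_\e$ an isomorphism onto it. Injectivity is the easy half: the assignment $t\mapsto t_\e$ is an injection of $\TL_n$ into the set of Motzkin $k$-diagrams, so $\{\bar t_\e\mid t\in\TL_n\}$ is a subset of the basis $\{\bar d\mid d\text{ a Motzkin }k\text{-diagram}\}$ of $\M_k(\delta)$ provided by Lemma~\ref{l:bases}, hence linearly independent; thus $\phi_\e$ carries a basis to a linearly independent set.

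The substance is multiplicativity, which I would extract from Theorem~\ref{t:bar-mult}(a). Fix $t,t'\in\TL_n$. Every vertex of a Temperley--Lieb diagram is non-isolated, so $\bott{t_\e}=\topp{t'_\e}$ --- this common subset of $[k]$ being $[n]$ when $\e=0$ and $[k]$ when $\e=1$ --- and hence the first, nonzero, case of Theorem~\ref{t:bar-mult}(a) always applies. Composition respects the $\otimes$-decomposition: from $\omega_{k-n}\circ\omega_{k-n}=\omega_{k-n}$ and $1_{k-n}\circ 1_{k-n}=1_{k-n}$ in $\Ptn_{k-n}$ one gets $t_\e\circ t'_\e=(t\circ t')_\e$, where $t\circ t'$ is the product in the partition monoid $\Ptn_n$ (still a Temperley--Lieb diagram). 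Likewise every interior closed loop of $\Gamma(t_\e,t'_\e)$ lies in the Temperley--Lieb part, so $N_1(t_\e,t'_\e)$ equals the loop count $N(t,t')$ that governs multiplication in $\TL_n(\delta-1)$. Using the linearity of $d\mapsto\bar d$ (and of the extension operation $t\mapsto t_\e$), we obtain
\[
\phi_\e(t)\,\phi_\e(t')=\bar t_\e\,\bar t'_\e=(\delta-1)^{N(t,t')}\,\overline{(t\circ t')_\e}=\overline{\bigl((\delta-1)^{N(t,t')}(t\circ t')\bigr)_\e}=\phi_\e(tt'),
\]
where $tt'=(\delta-1)^{N(t,t')}(t\circ t')$ denotes the product in $\TL_n(\delta-1)$. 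Thus $\phi_\e$ is an algebra homomorphism; as $\phi_\e(1_n)$ acts as a two-sided identity on the subring $\phi_\e(\TL_n(\delta-1))$ of $\M_k(\delta)$, that image is a subalgebra and $\phi_\e$ is a unital algebra isomorphism onto it.

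I do not expect a genuine obstacle --- consistent with the remark preceding the lemma that this is an immediate consequence of Theorem~\ref{t:bar-mult}(a). The only things to confirm are that the hypothesis $\bott{t_\e}=\topp{t'_\e}$ of that theorem is automatic for Temperley--Lieb diagrams, and that composition and loop counting are compatible with the $\otimes$-decomposition; both are routine. (Equivalently: $t\mapsto t_\e$ is a monoid embedding of the Temperley--Lieb monoid $\TL_n$ into the Motzkin monoid on $k$ strands, after which one simply quotes Theorem~\ref{t:bar-mult}(a).)
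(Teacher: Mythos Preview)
Your proof is correct and follows exactly the approach the paper indicates: the paper offers no explicit proof, merely stating that the lemma is an immediate consequence of Theorem~\ref{t:bar-mult}(a), and you have carefully unpacked that implication by checking that $\bott{t_\e}=\topp{t'_\e}$ automatically holds, that composition and loop counts respect the $\otimes$-decomposition, and that injectivity follows from Lemma~\ref{l:bases}. One minor notational remark: in the paper $\e$ is reserved for $\varepsilon$ (used in $\e_i=\tilde e_i$), so you may wish to rename your index to avoid a clash.
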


Let $\RP_k$ (resp., $\LP_k$) be the subalgebra of $\M_k(\delta)$
generated by all $r_i$, $p_i$ (resp, all $l_i$, $p_i$). The following
result is a variant of \cite{BH}*{(2.12)}.

\begin{lem}\label{l:Motzkin-factors}
  Let $d = d(A,t,B)$ be a diagram in $\D_n(k)$, where $t$ is in
  $\D_n(n)$. In the Motzkin algebra $\M_k(\delta)$, we have the
  factorizations
  \[
  d = r_A (t \otimes \omega_{k-n}) l_B , \qquad d = r_A (t \otimes
  1_{k-n}) l_B
  \]
  where $r_A \in \RP_k$ (resp., $l_B \in \LP_k$) is the unique
  $k$-diagram with edges from the first $n$ bottom-row (resp.,
  top-row) vertices connecting to the top-row (resp., top-row)
  vertices indexed by $A$ (resp., $B$), in order.
\end{lem}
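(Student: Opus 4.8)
The plan is to verify both identities by directly computing the diagram products in the partition monoid $\Ptn_k$, tracking connected components of the composite configurations and checking that no closed loop — and hence no power of $\delta$ — is ever discarded. Write $A = \{a_1 < \cdots < a_n\}$ and $B = \{b_1 < \cdots < b_n\}$. By Lemma~\ref{l:triples}, the diagram $d = d(A,t,B)$ has non-isolated top (resp.\ bottom) vertices exactly $a_1, \dots, a_n$ (resp.\ $b_1, \dots, b_n$), and its edges are obtained from those of $t$ by the relabelling $i \mapsto a_i$ on the top and $j \mapsto b_j$ on the bottom: a cup of $t$ on its $i$-th and $i'$-th top vertices becomes the cup $\{a_i,a_{i'}\}$ of $d$, a cap of $t$ on its $j$-th and $j'$-th bottom vertices becomes the cap $\{b_j',b_{j'}'\}$, and a through-strand of $t$ from its $i$-th top to its $j$-th bottom vertex becomes the strand $\{a_i,b_j'\}$. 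Moreover $r_A$ is determined by the stated description — its edges are precisely the strands $\{i',a_i\}$, all other vertices being isolated — and it lies in $\RP_k$, since it is a product of the $p_i$ (which isolate a column, e.g.\ $1_n\otimes\omega_{k-n}=p_{n+1}\cdots p_k$) and the $r_i$ (which slide a single strand); symmetrically, $l_B\in\LP_k$ has edges precisely the strands $\{i,b_i'\}$. (This is essentially \cite{BH}*{(2.12)}.)

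The core computation is $r_A\,(t\otimes\omega_{k-n})$. Stack $r_A$ above $t\otimes\omega_{k-n}$ and identify the middle row. Since every vertex of a Temperley--Lieb diagram lies on exactly one edge, every vertex of $t\otimes\omega_{k-n}$ has degree $\le 1$, and likewise every vertex of $r_A$; hence in $\Gamma(r_A,t\otimes\omega_{k-n})$ every vertex has degree $\le 2$, so each connected component is a simple path or a cycle. A cycle would be built from degree-$2$ vertices only — that is, from middle-row vertices only — and would alternate between edges of $r_A$ and edges of $t\otimes\omega_{k-n}$ that have both endpoints in the middle row; but $r_A$ has no such edges (each of its edges runs from the middle row up to the top). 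So no cycle occurs, no closed loop is discarded, and (as $\delta'=1$ in $\M_k(\delta)$) the product $r_A\,(t\otimes\omega_{k-n})$ is the single diagram $r_A\circ(t\otimes\omega_{k-n})$. Tracing the remaining path components shows this diagram is obtained from $t\otimes\omega_{k-n}$ by relabelling its top row via $j\mapsto a_j$ ($j\le n$) and isolating every other top vertex: a cup of $t$ re-emerges as the cup $\{a_i,a_{i'}\}$, a through-strand of $t$ becomes a strand from $a_i$ to its unchanged bottom endpoint, and the caps and isolated vertices are left alone.

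Right-multiplication by $l_B$ is the mirror image: $l_B$ has no horizontal edges, so again no loop is created, and the effect of $(\,\cdot\,)\,l_B$ is to relabel the bottom row via $j\mapsto b_j$ ($j\le n$) and isolate every other bottom vertex. Composing the two relabellings and comparing with the description of $d(A,t,B)$ above gives $d = r_A\,(t\otimes\omega_{k-n})\,l_B$. For the second factorization, observe that $t\otimes 1_{k-n}$ differs from $t\otimes\omega_{k-n}$ only in the $k-n$ vertical strands in columns $n+1,\dots,k$; since the bottom vertices $n+1,\dots,k$ of $r_A$ are isolated — equivalently $r_A = r_A\,p_{n+1}\cdots p_k$ and $p_{n+1}\cdots p_k\,(t\otimes 1_{k-n}) = t\otimes\omega_{k-n}$ — these strands are cut off when the product is formed, so $r_A\,(t\otimes 1_{k-n}) = r_A\,(t\otimes\omega_{k-n})$ and the second identity follows from the first. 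The only step that needs genuine care is the component bookkeeping in the middle paragraph, especially ruling out closed loops; but this is immediate from the absence of cups and caps in $r_A$ and $l_B$, so I do not anticipate a real obstacle.
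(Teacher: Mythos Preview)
Your proof is correct and amounts to an explicit unpacking of the $RTL$ factorization that the paper simply cites from \cite{BH}*{\S2}; the only cosmetic difference is that the paper establishes the $t\otimes 1_{k-n}$ version first (by citation) and deduces the $t\otimes\omega_{k-n}$ version, whereas you do it the other way round via the identity $r_A\,p_{n+1}\cdots p_k = r_A$. Your care in ruling out closed loops (hence powers of $\delta$) and noting that interior paths contribute only $\delta'=1$ is exactly the bookkeeping that the citation suppresses.
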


\begin{proof}
The second factorization is the $RTL$ factorization in
\cite{BH}*{\S2}. It doesn't matter in that argument if we replace the
identity edges in $1_{k-n}$ by the isolated vertices in
$\omega_{k-n}$, which yields the first factorization.
\end{proof}

We illustrate the proof of Lemma~\ref{l:Motzkin-factors}.  As an
example of the $RTL$ factorization in \cite{BH}, we have
\[
d = \;
\shade{\begin{tikzpicture}[scale = 0.35,thick, baseline={(0,-1ex/2)}] 
\tikzstyle{vertex} = [shape = circle, minimum size = 4pt, inner sep = 1pt] 
\node[vertex] (G--8) at (10.5, -1) [shape = circle, draw,fill=black] {}; 
\node[vertex] (G-6) at (7.5, 1) [shape = circle, draw,fill=black] {}; 
\node[vertex] (G--7) at (9.0, -1) [shape = circle, draw,fill=black] {}; 
\node[vertex] (G--3) at (3.0, -1) [shape = circle, draw,fill=black] {}; 
\node[vertex] (G--6) at (7.5, -1) [shape = circle, draw,fill=black] {}; 
\node[vertex] (G--5) at (6.0, -1) [shape = circle, draw,fill=black] {}; 
\node[vertex] (G--4) at (4.5, -1) [shape = circle, draw,fill=black] {}; 
\node[vertex] (G--2) at (1.5, -1) [shape = circle, draw,fill=black] {}; 
\node[vertex] (G-5) at (6.0, 1) [shape = circle, draw,fill=black] {}; 
\node[vertex] (G--1) at (0.0, -1) [shape = circle, draw,fill=black] {}; 
\node[vertex] (G-2) at (1.5, 1) [shape = circle, draw,fill=black] {}; 
\node[vertex] (G-1) at (0.0, 1) [shape = circle, draw,fill=black] {}; 
\node[vertex] (G-3) at (3.0, 1) [shape = circle, draw,fill=black] {}; 
\node[vertex] (G-4) at (4.5, 1) [shape = circle, draw,fill=black] {}; 
\node[vertex] (G-7) at (9.0, 1) [shape = circle, draw,fill=black] {}; 
\node[vertex] (G-8) at (10.5, 1) [shape = circle, draw,fill=black] {}; 
\draw (G-6) .. controls +(1, -1) and +(-1, 1) .. (G--8); 
\draw (G--7) .. controls +(-0.9, 0.9) and +(0.9, 0.9) .. (G--3); 
\draw (G--5) .. controls +(-0.5, 0.5) and +(0.5, 0.5) .. (G--4); 
\draw (G-5) .. controls +(-1, -1) and +(1, 1) .. (G--2); 
\draw (G-2) .. controls +(-0.75, -1) and +(0.75, 1) .. (G--1); 
\draw (G-3) .. controls +(0.5, -0.5) and +(-0.5, -0.5) .. (G-4); 
\draw (G-7) .. controls +(0.5, -0.5) and +(-0.5, -0.5) .. (G-8); 
\end{tikzpicture}
}
\; = \;
\begin{aligned}
\shade{\begin{tikzpicture}[scale = 0.35,thick, baseline={(0,-1ex/2)}] 
\tikzstyle{vertex} = [shape = circle, minimum size = 4pt, inner sep = 1pt] 
\node[vertex] (G--8) at (10.5, -1) [shape = circle, draw,fill=black] {}; 
\node[vertex] (G--7) at (9.0, -1) [shape = circle, draw,fill=black] {}; 
\node[vertex] (G-8) at (10.5, 1) [shape = circle, draw,fill=black] {}; 
\node[vertex] (G--6) at (7.5, -1) [shape = circle, draw,fill=black] {}; 
\node[vertex] (G-7) at (9.0, 1) [shape = circle, draw,fill=black] {}; 
\node[vertex] (G--5) at (6.0, -1) [shape = circle, draw,fill=black] {}; 
\node[vertex] (G-6) at (7.5, 1) [shape = circle, draw,fill=black] {}; 
\node[vertex] (G--4) at (4.5, -1) [shape = circle, draw,fill=black] {}; 
\node[vertex] (G-5) at (6.0, 1) [shape = circle, draw,fill=black] {}; 
\node[vertex] (G--3) at (3.0, -1) [shape = circle, draw,fill=black] {}; 
\node[vertex] (G-4) at (4.5, 1) [shape = circle, draw,fill=black] {}; 
\node[vertex] (G--2) at (1.5, -1) [shape = circle, draw,fill=black] {}; 
\node[vertex] (G-3) at (3.0, 1) [shape = circle, draw,fill=black] {}; 
\node[vertex] (G--1) at (0.0, -1) [shape = circle, draw,fill=black] {}; 
\node[vertex] (G-2) at (1.5, 1) [shape = circle, draw,fill=black] {}; 
\node[vertex] (G-1) at (0.0, 1) [shape = circle, draw,fill=black] {}; 
\draw (G-8) .. controls +(-0.75, -1) and +(0.75, 1) .. (G--7); 
\draw (G-7) .. controls +(-0.75, -1) and +(0.75, 1) .. (G--6); 
\draw (G-6) .. controls +(-0.75, -1) and +(0.75, 1) .. (G--5); 
\draw (G-5) .. controls +(-0.75, -1) and +(0.75, 1) .. (G--4); 
\draw (G-4) .. controls +(-0.75, -1) and +(0.75, 1) .. (G--3); 
\draw (G-3) .. controls +(-0.75, -1) and +(0.75, 1) .. (G--2); 
\draw (G-2) .. controls +(-0.75, -1) and +(0.75, 1) .. (G--1); 
\end{tikzpicture}
} \\
\shade{\begin{tikzpicture}[scale = 0.35,thick, baseline={(0,-1ex/2)}] 
\tikzstyle{vertex} = [shape = circle, minimum size = 4pt, inner sep = 1pt] 
\node[vertex] (G--8) at (10.5, -1) [shape = circle, draw,fill=black] {}; 
\node[vertex] (G-8) at (10.5, 1) [shape = circle, draw,fill=black] {}; 
\node[vertex] (G--7) at (9.0, -1) [shape = circle, draw,fill=black] {}; 
\node[vertex] (G-5) at (6.0, 1) [shape = circle, draw,fill=black] {}; 
\node[vertex] (G--6) at (7.5, -1) [shape = circle, draw,fill=black] {}; 
\node[vertex] (G--3) at (3.0, -1) [shape = circle, draw,fill=black] {}; 
\node[vertex] (G--5) at (6.0, -1) [shape = circle, draw,fill=black] {}; 
\node[vertex] (G--4) at (4.5, -1) [shape = circle, draw,fill=black] {}; 
\node[vertex] (G--2) at (1.5, -1) [shape = circle, draw,fill=black] {}; 
\node[vertex] (G-4) at (4.5, 1) [shape = circle, draw,fill=black] {}; 
\node[vertex] (G--1) at (0.0, -1) [shape = circle, draw,fill=black] {}; 
\node[vertex] (G-1) at (0.0, 1) [shape = circle, draw,fill=black] {}; 
\node[vertex] (G-2) at (1.5, 1) [shape = circle, draw,fill=black] {}; 
\node[vertex] (G-3) at (3.0, 1) [shape = circle, draw,fill=black] {}; 
\node[vertex] (G-6) at (7.5, 1) [shape = circle, draw,fill=black] {}; 
\node[vertex] (G-7) at (9.0, 1) [shape = circle, draw,fill=black] {}; 
\draw (G-8) .. controls +(0, -1) and +(0, 1) .. (G--8); 
\draw (G-5) .. controls +(1, -1) and +(-1, 1) .. (G--7); 
\draw (G--6) .. controls +(-0.8, 0.8) and +(0.8, 0.8) .. (G--3); 
\draw (G--5) .. controls +(-0.5, 0.5) and +(0.5, 0.5) .. (G--4); 
\draw (G-4) .. controls +(-1, -1) and +(1, 1) .. (G--2); 
\draw (G-1) .. controls +(0, -1) and +(0, 1) .. (G--1); 
\draw (G-2) .. controls +(0.5, -0.5) and +(-0.5, -0.5) .. (G-3); 
\draw (G-6) .. controls +(0.5, -0.5) and +(-0.5, -0.5) .. (G-7); 
\end{tikzpicture}
}   \\
\shade{\begin{tikzpicture}[scale = 0.35,thick, baseline={(0,-1ex/2)}] 
\tikzstyle{vertex} = [shape = circle, minimum size = 4pt, inner sep = 1pt] 
\node[vertex] (G--8) at (10.5, -1) [shape = circle, draw,fill=black] {}; 
\node[vertex] (G-7) at (9.0, 1) [shape = circle, draw,fill=black] {}; 
\node[vertex] (G--7) at (9.0, -1) [shape = circle, draw,fill=black] {}; 
\node[vertex] (G-6) at (7.5, 1) [shape = circle, draw,fill=black] {}; 
\node[vertex] (G--6) at (7.5, -1) [shape = circle, draw,fill=black] {}; 
\node[vertex] (G--5) at (6.0, -1) [shape = circle, draw,fill=black] {}; 
\node[vertex] (G-5) at (6.0, 1) [shape = circle, draw,fill=black] {}; 
\node[vertex] (G--4) at (4.5, -1) [shape = circle, draw,fill=black] {}; 
\node[vertex] (G-4) at (4.5, 1) [shape = circle, draw,fill=black] {}; 
\node[vertex] (G--3) at (3.0, -1) [shape = circle, draw,fill=black] {}; 
\node[vertex] (G-3) at (3.0, 1) [shape = circle, draw,fill=black] {}; 
\node[vertex] (G--2) at (1.5, -1) [shape = circle, draw,fill=black] {}; 
\node[vertex] (G-2) at (1.5, 1) [shape = circle, draw,fill=black] {}; 
\node[vertex] (G--1) at (0.0, -1) [shape = circle, draw,fill=black] {}; 
\node[vertex] (G-1) at (0.0, 1) [shape = circle, draw,fill=black] {}; 
\node[vertex] (G-8) at (10.5, 1) [shape = circle, draw,fill=black] {}; 
\draw (G-7) .. controls +(0.75, -1) and +(-0.75, 1) .. (G--8); 
\draw (G-6) .. controls +(0.75, -1) and +(-0.75, 1) .. (G--7); 
\draw (G-5) .. controls +(0, -1) and +(0, 1) .. (G--5); 
\draw (G-4) .. controls +(0, -1) and +(0, 1) .. (G--4); 
\draw (G-3) .. controls +(0, -1) and +(0, 1) .. (G--3); 
\draw (G-2) .. controls +(0, -1) and +(0, 1) .. (G--2); 
\draw (G-1) .. controls +(0, -1) and +(0, 1) .. (G--1); 
\end{tikzpicture}
}  
\end{aligned}
\]
in which the middle diagram is the diagram $t \otimes 1_1$,
where $t \in \D_7(7)$ is the diagram
\[
t = \;
\shade{\begin{tikzpicture}[scale = 0.35,thick, baseline={(0,-1ex/2)}] 
\tikzstyle{vertex} = [shape = circle, minimum size = 4pt, inner sep = 1pt] 
\node[vertex] (G--7) at (9.0, -1) [shape = circle, draw,fill=black] {}; 
\node[vertex] (G-5) at (6.0, 1) [shape = circle, draw,fill=black] {}; 
\node[vertex] (G--6) at (7.5, -1) [shape = circle, draw,fill=black] {}; 
\node[vertex] (G--3) at (3.0, -1) [shape = circle, draw,fill=black] {}; 
\node[vertex] (G--5) at (6.0, -1) [shape = circle, draw,fill=black] {}; 
\node[vertex] (G--4) at (4.5, -1) [shape = circle, draw,fill=black] {}; 
\node[vertex] (G--2) at (1.5, -1) [shape = circle, draw,fill=black] {}; 
\node[vertex] (G-4) at (4.5, 1) [shape = circle, draw,fill=black] {}; 
\node[vertex] (G--1) at (0.0, -1) [shape = circle, draw,fill=black] {}; 
\node[vertex] (G-1) at (0.0, 1) [shape = circle, draw,fill=black] {}; 
\node[vertex] (G-2) at (1.5, 1) [shape = circle, draw,fill=black] {}; 
\node[vertex] (G-3) at (3.0, 1) [shape = circle, draw,fill=black] {}; 
\node[vertex] (G-6) at (7.5, 1) [shape = circle, draw,fill=black] {}; 
\node[vertex] (G-7) at (9.0, 1) [shape = circle, draw,fill=black] {}; 
\draw (G-5) .. controls +(1, -1) and +(-1, 1) .. (G--7); 
\draw (G--6) .. controls +(-0.8, 0.8) and +(0.8, 0.8) .. (G--3); 
\draw (G--5) .. controls +(-0.5, 0.5) and +(0.5, 0.5) .. (G--4); 
\draw (G-4) .. controls +(-1, -1) and +(1, 1) .. (G--2); 
\draw (G-1) .. controls +(0, -1) and +(0, 1) .. (G--1); 
\draw (G-2) .. controls +(0.5, -0.5) and +(-0.5, -0.5) .. (G-3); 
\draw (G-6) .. controls +(0.5, -0.5) and +(-0.5, -0.5) .. (G-7); 
\end{tikzpicture}
} \; .
\]
It is clear that replacing $t \otimes 1_1$ in the middle of the above
stacked product by $t \otimes \omega_1$ makes no difference in the product.

The next task is to identify a set of generators for
$\PTL_k(\delta)$. To that end, we set
\begin{equation}\label{e:eps_i}
\e_i := \tilde{e}_i = (1-p_i)e_i(1-p_i) .
\end{equation}
We note that the equality $\e_i = (1-p_i)e_i(1-p_i)$ remains valid if
we replace either or both factors of $(1-p_i)$ by $(1-p_{i+1})$.
Since the diagrams $l_i$, $r_i$, and $p_i$ have no horizontal edges,
it is clear that $\tilde{l}_i = l_i$, $\tilde{r}_i = r_i$, and
$\tilde{p}_i = p_i$.

\begin{thm}\label{t:generators}
$\PTL_k(\delta)$ is generated by the set $\{l_i, r_i, \e_i \mid i \in
  [k-1] \}$.
\end{thm}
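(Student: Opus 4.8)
The plan is to prove that the subalgebra $B := \langle l_i, r_i, \e_i \mid i \in [k-1]\rangle$ of $\M_k(\delta)$ coincides with $\PTL_k(\delta)$. The inclusion $B \subseteq \PTL_k(\delta)$ is immediate from Theorem~\ref{t:bases}, since $l_i = \tilde l_i$, $r_i = \tilde r_i$, $\e_i = \tilde e_i$ and $l_i, r_i, e_i$ are balanced Motzkin diagrams. For the reverse inclusion it suffices, again by Theorem~\ref{t:bases}, to show that $\tilde d \in B$ for every $d \in \D(k)$. A useful preliminary observation: since $p_i = r_i l_i = l_{i-1} r_{i-1}$, the subalgebra $\RL_k$ lies in $B$, and it contains $\RP_k$ and $\LP_k$; in particular every $p_i$, and every diagram $r_A, l_B$ appearing in Lemma~\ref{l:Motzkin-factors}, belongs to $B$.

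I would then induct on $n$ to show that $\tilde d \in B$ for all $d \in \D_n(k)$. The base case $n = 0$ is $\widetilde{\omega_k} = \omega_k = p_1 \cdots p_k \in B$. For the inductive step, fix $d = d(A,t,B) \in \D_n(k)$ with $t \in \D_n(n) = \TL_n$. Applying Corollary~\ref{c:tilde-cor} to the factorization $d = r_A\,(t \otimes \omega_{k-n})\, l_B$ of Lemma~\ref{l:Motzkin-factors} --- the hypotheses there hold because the relevant frames of $r_A$ and $l_B$ equal $\{1,\dots,n\}$, which contains all horizontal-edge endpoints of $t \otimes \omega_{k-n}$ --- gives $\tilde d = r_A\, \widetilde{t \otimes \omega_{k-n}}\, l_B$. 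Since $r_A, l_B \in B$, the step reduces to showing $\widetilde{t \otimes \omega_{k-n}} \in B$. If $t = 1_n$ this element is $p_{n+1}\cdots p_k \in B$; otherwise write $t = e_{i_1} \circ \cdots \circ e_{i_m}$ as a loop-free product in the Temperley--Lieb monoid, with $i_j \in [n-1]$ and $m \ge 1$.

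The core computation is then to evaluate $y := (p_{n+1}\cdots p_k)\,\e_{i_1}\cdots\e_{i_m} \in B$ in two ways. On one hand, Corollary~\ref{c:tilde-cor}(a) gives $\widetilde{e_i \otimes \omega_{k-n}} = (p_{n+1}\cdots p_k)\,\e_i$ for $i \in [n-1]$ (since $e_i\otimes\omega_{k-n} = (p_{n+1}\cdots p_k)\circ e_i$), and since $p_{n+1}\cdots p_k$ is idempotent and commutes with each $\e_{i_j}$, we get $y = \widetilde{e_{i_1}\otimes\omega_{k-n}} \cdots \widetilde{e_{i_m}\otimes\omega_{k-n}}$. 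On the other hand, expanding this product by repeated use of Theorem~\ref{t:bar-mult}(b) --- the sets $\Omega$ vanish at every stage because the isolated columns $n+1,\dots,k$ never meet a horizontal edge, and no power of $\delta-1$ occurs because the word was chosen loop-free --- yields $y = \prod_{l \in S}(1-p_l)\,\widetilde{t \otimes \omega_{k-n}}$ for some $S \subseteq \topp{t}_V$, the top endpoints of through edges of $t$ (those factors $1-p_l$ indexed by horizontal endpoints may be dropped, as they act as the identity on $\widetilde{t \otimes \omega_{k-n}}$). Expanding $\prod_{l\in S}(1-p_l)$ and applying Corollary~\ref{c:tilde-cor}(a) once more --- for $T \subseteq S$ the element $\bigl(\prod_{l\in T}p_l\bigr)\widetilde{t\otimes\omega_{k-n}}$ equals $\widetilde{d_T}$ for the diagram $d_T \in \D_{n-|T|}(k)$ obtained from $t \otimes \omega_{k-n}$ by deleting the through edges indexed by $T$ --- we obtain
\[
\widetilde{t \otimes \omega_{k-n}} \;=\; y \;-\!\!\!\sum_{\emptyset \ne T \subseteq S}\!\!\!(-1)^{|T|}\,\widetilde{d_T}.
\]
Each term with $T \ne \emptyset$ has $n - |T| < n$, so $\widetilde{d_T} \in B$ by the inductive hypothesis; hence $\widetilde{t \otimes \omega_{k-n}} \in B$, which closes the induction.

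The main obstacle is the bookkeeping in the third paragraph: one must track carefully how the correction factors $\prod(1-p_l)$ accumulate as the $\e_{i_j}$ are multiplied one generator at a time, verify that the indices that survive are genuinely through-edge endpoints of $t$ (so that deleting the corresponding edges really does lower the edge count, keeping the induction well-founded), and confirm that no annihilation or spurious scalar $\delta - 1$ enters. One should also be careful to justify the standard fact invoked --- that every Temperley--Lieb diagram is a loop-free product of the generators $e_i$ --- for instance by a short $\delta$-divisibility argument for the coefficients over $\Z[\delta]$.
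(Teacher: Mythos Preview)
Your proof is correct and follows essentially the same approach as the paper's: both induct on the number of edges $n$, use the factorization $d = r_A(t \otimes \omega_{k-n})l_B$ from Lemma~\ref{l:Motzkin-factors}, express $t$ as a word in the $e_i$, and invoke Theorem~\ref{t:bar-mult}(b) together with Corollary~\ref{c:tilde-cor} to exhibit $\tilde d$ as an element of the generated subalgebra modulo lower-order terms handled by induction. The only differences are organizational: you first peel off $r_A$ and $l_B$ via the exact identity $\tilde d = r_A\,\widetilde{t\otimes\omega_{k-n}}\,l_B$ and insist on a loop-free word to suppress $(\delta-1)$ factors, whereas the paper keeps $r_A, l_B$ in the product and simply writes ``$\tilde d$ plus lower order terms'' without tracking the correction factors $\prod(1-p_l)$ explicitly.
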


\begin{proof}
Let $P$ be the subalgebra generated by the given set.  By Theorem
\ref{t:bases}, it suffices to show that $\tilde{d}$ belongs to $P$,
for any $d$ in $\D(k)$. Let $d = d(A,t,B)$ as discussed in the
paragraph after Lemma~\ref{l:triples}. By
Lemma~\ref{l:Motzkin-factors}, we have $d = r_A(t \otimes \omega_{k-n})
\otimes l_B$.

We will need to distinguish generators with differing number of
vertices, so we temporarily (in this proof only) write $e_i(n)$ for
the diagram $e_i$ in $\D(n)$, for each $n \le k$.  For any $1 \le n
\le k-1$, let
\[
e_{i,n}(k) = e_i(n) \otimes \omega_{k-n} = e_i(k) p_{n+1} \cdots p_k .
\]
Then by Corollary~\ref{c:tilde-cor}, $\tilde{e}_{i,n}(k) =
\tilde{e}_i(k) p_{n+1} \cdots p_k = \e_i p_{n+1} \cdots p_k$, hence
belongs to $P$. Now let
\[
t = e_{i_1}(n) e_{i_2}(n) \cdots e_{i_m}(n)
\]
be any word that expresses $t$ in terms of the standard
Temperley--Lieb generators of $\TL_n(\delta)$. Then the multiplication
rule in Theorem~\ref{t:bar-mult}(b) along with
Corollary~\ref{c:tilde-cor} implies that the equation
\[
r_A \tilde{e}_{i_1,n}(k) \tilde{e}_{i_2,n}(k) \cdots
\tilde{e}_{i_m,n}(k) l_B = \tilde{d} + \text{lower order terms}
\]
holds in $\PTL_k(\delta)$. The left hand side of the above equation
belongs to~$P$. The lower order terms are a linear combination of
$\tilde{u}$ such that $u$ belongs to $\D_j(k)$ for some $j<n$. By
induction, we may assume that each such $\tilde{u}$ belongs to
$P$. Hence, the same conclusion holds for~$\tilde{d}$, completing the
proof.
\end{proof}

\section{Semisimplicity of $\PTL_k(\delta)$}\label{s:ss}%
\noindent
In this section, we fix the ground ring $\Bbbk$, $\delta \in \Bbbk$,
and $k$.

\begin{thm}\label{t:Xn-dec}
Let $\Bbbk$ be an arbitrary unital commutative ring. Let $X(n)$ be the
$\Bbbk$-linear span of $\{\bar{d} \mid d \in \D_n(k)\}$. Then the
algebra $\PTL_k(\delta)$ has the direct sum decomposition
\begin{equation*}
  \PTL_k(\delta) = \bigoplus_{n=0}^k X(n)
\end{equation*}
into pairwise orthogonal two-sided ideals, in the sense that
$xy=0$ if $x \in X(n)$ and $y \in X(n')$, where $n \ne n'$.
\end{thm}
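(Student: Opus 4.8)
The plan is to prove both the direct sum decomposition as $\Bbbk$-modules and the orthogonality of the $X(n)$ separately. The module decomposition is immediate: by Theorem~\ref{t:bases}, $\{\bar{d} \mid d \in \D(k)\}$ is a $\Bbbk$-basis of $\PTL_k(\delta)$, and since $\D(k) = \bigsqcup_{n=0}^k \D_n(k)$ is a disjoint union, the spans $X(n)$ give a direct sum decomposition of the underlying $\Bbbk$-module. So the content is entirely in showing that $X(n)$ is a two-sided ideal and that $X(n) X(n') = 0$ when $n \neq n'$; in fact the orthogonality statement will imply the ideal statement, since $\PTL_k(\delta) = \bigoplus_{n'} X(n')$ forces $X(n) \cdot \PTL_k(\delta) = \sum_{n'} X(n)X(n') \subseteq X(n)$ (and symmetrically on the other side), once we know $X(n)X(n') \subseteq X(n)$, which is part of what orthogonality $X(n)X(n')=0$ for $n\ne n'$ gives together with $X(n)X(n) \subseteq X(n)$.

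The heart of the argument is therefore the following: for balanced Motzkin diagrams $d_1 \in \D_{n_1}(k)$ and $d_2 \in \D_{n_2}(k)$, compute $\bar{d}_1 \bar{d}_2$ using Theorem~\ref{t:bar-mult}(a). That theorem tells us the product is either $0$ or $(\delta-1)^{N_1(d_1,d_2)}\,\bar{d}_3$ with $d_3 = d_1 \circ d_2$, the latter occurring only when $\bott{d_1} = \topp{d_2}$. So I would first dispose of the case $\bott{d_1} \neq \topp{d_2}$ (product is $0$, fine). In the remaining case, I need to understand the number of edges of $d_3 = d_1 \circ d_2$ and whether $d_3$ is again balanced. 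Here I would track what happens to the edge count: when $\bott{d_1} = \topp{d_2}$, every non-isolated bottom vertex of $d_1$ matches a non-isolated top vertex of $d_2$, so in the composite configuration $\Gamma(d_1,d_2)$ the edges of $d_1$ and $d_2$ form paths and loops through the middle row with no ``dead ends''; the key combinatorial fact is that $d_3$ then has exactly $n_1 = n_2$ edges. Indeed, matching $\bott{d_1} = \topp{d_2}$ already forces $|\bott{d_1}| = |\topp{d_2}|$, i.e.\ $2n_1 - (\text{number of through-strands of }d_1) = \ldots$; more cleanly, a balanced Motzkin diagram in $\D_n(k)$ has $n$ non-isolated top vertices and $n$ non-isolated bottom vertices, so $\bott{d_1} = \topp{d_2}$ forces $n_1 = n_2 =: n$, and then a path-counting argument in $\Gamma(d_1,d_2)$ shows $d_3 \in \D_n(k)$ — every top-row endpoint of $d_3$ traces through the middle and out to a bottom-row endpoint or back to the top, and the balance (equal cups and caps) is preserved because the middle-row matching is a perfect matching on a balanced configuration. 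Hence $\bar{d}_1 \bar{d}_2 \in X(n)$, which simultaneously shows $X(n)X(n) \subseteq X(n)$ and $X(n)X(n') = 0$ for $n \neq n'$ (the latter because $\bott{d_1} = \topp{d_2}$ is then impossible, as $n_1 \neq n_2$).

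I expect the main obstacle to be the edge-count/balance bookkeeping in the subcase $\bott{d_1} = \topp{d_2}$: one must argue carefully that $d_3$ has precisely $n$ edges (not fewer, not more) and that it is balanced. The ``not more'' direction is easy since $d_3 \le$ (a diagram built from the $n$ strands of $d_1$ and $n$ of $d_2$), but the ``not fewer'' and the balance claim need the observation that in the composite configuration each of the $n$ top-row frame vertices and each of the $n$ bottom-row frame vertices is an endpoint of a path in $\Gamma(d_1,d_2)$ that survives in $d_3$ — no strand gets absorbed into a loop, precisely because $\bott{d_1} = \topp{d_2}$ means the middle row matches frame-to-frame with no mismatches. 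Once that is in hand, linearity extends the product formula from basis elements to all of $X(n), X(n')$ and the theorem follows.
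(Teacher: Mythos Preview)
Your proposal is correct and follows essentially the same route as the paper, just with the details spelled out: the paper's proof is a single sentence invoking Theorem~\ref{t:bar-mult}(a) together with balance, and you have unpacked precisely the reasoning that sentence leaves implicit. In particular, your key observation---that a balanced Motzkin diagram in $\D_n(k)$ has exactly $n$ non-isolated vertices in each row, so $\bott{d_1}=\topp{d_2}$ forces $n_1=n_2$ and then $\topp{d_3}=\topp{d_1}$, $\bott{d_3}=\bott{d_2}$ forces $d_3\in\D_n(k)$---is exactly the ``balanced diagrams'' ingredient the paper gestures at.
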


\begin{proof}
This is an immediate consequence of Theorem \ref{t:bar-mult}, along
with the fact that we are dealing only with balanced diagrams.
\end{proof}

Let $Q_n$ be the free $\Bbbk$-module on the set of all cardinality $n$
subsets of $[k]$. Identify $\End_\Bbbk(Q_n)$ with the matrix algebra
$\Mat_{\binom{k}{n}}(\Bbbk)$ by means of its basis. All tensor
products in this paper are taken over the ground ring $\Bbbk$, so we
write $\otimes$ instead of $\otimes_\Bbbk$.

\begin{thm}\label{t:Xn}
For any $\delta \in \Bbbk$, where $\Bbbk$ is a commutative unital
ring, the ideal $X(n)$ is isomorphic to the matrix algebra
\[
\Mat_{\binom{k}{n}}\big(\TL_n(\delta-1)\big) \cong
\Mat_{\binom{k}{n}}(\Bbbk) \otimes \TL_n(\delta-1).
\]
Furthermore, every $X(n)$-module is isomorphic to one of the form $Q_n
\otimes N$, where $N$ is a $\TL_n(\delta-1)$-module.
\end{thm}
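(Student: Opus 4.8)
The plan is to write down the claimed isomorphism explicitly on bases and then check that it is multiplicative, using only the product rule of Theorem~\ref{t:bar-mult}(a). By Lemma~\ref{l:triples}, the elements $\bar{d}(A,t,B)$, with $A,B$ ranging over the $n$-subsets of $[k]$ and $t$ over $\D_n(n)=\TL_n$, form a $\Bbbk$-basis of $X(n)$. Index the matrix units $E_{A,B}$ of $\End_\Bbbk(Q_n)\cong\Mat_{\binom{k}{n}}(\Bbbk)$ by these same $n$-subsets, via the distinguished basis of $Q_n$, and recall the canonical identification $\Mat_m(\TL_n(\delta-1))\cong\Mat_m(\Bbbk)\otimes\TL_n(\delta-1)$, $m=\binom{k}{n}$. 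Then I would define the $\Bbbk$-linear map
\[
\varphi\colon X(n)\longrightarrow \End_\Bbbk(Q_n)\otimes\TL_n(\delta-1),\qquad
\bar{d}(A,t,B)\longmapsto E_{A,B}\otimes t ,
\]
which is a $\Bbbk$-module isomorphism since it sends the basis of the source bijectively onto the basis $\{E_{A,B}\otimes t\}$ of the target.

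First I would verify that $\varphi$ preserves products. Fix $d_1=d(A,t,B)$ and $d_2=d(A',t',B')$ in $\D_n(k)$, so that $\topp{d_1}=A$, $\bott{d_1}=B$, $\topp{d_2}=A'$, $\bott{d_2}=B'$. Theorem~\ref{t:bar-mult}(a) gives $\bar{d}_1\bar{d}_2=0$ unless $B=A'$, matching $E_{A,B}E_{A',B'}=0$ unless $B=A'$. When $B=A'=:C$, the composite configuration $\Gamma(d_1,d_2)$ contributes only the data of the Temperley--Lieb composite $\Gamma(t,t')$: every middle-row vertex outside $C$ is isolated in both $d_1$ and $d_2$, hence inert, while on the $n$ vertices of $C$ the bottom configuration of $d_1$ is a copy of $t$ and the top configuration of $d_2$ is a copy of $t'$, glued along $C$ in the same left-to-right order. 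Consequently $N_1(d_1,d_2)$ equals the number of closed loops formed when composing $t$ with $t'$, and the surviving diagram $d_1\circ d_2$ has top frame $A$, bottom frame $B'$, and Temperley--Lieb part $t\circ t'$; that is, $d_1\circ d_2=d(A,\,t\circ t',\,B')$, so that
\[
\bar{d}_1\,\bar{d}_2=(\delta-1)^{N_1(d_1,d_2)}\,\bar{d}(A,\,t\circ t',\,B') .
\]
On the matrix side $\varphi(\bar{d}_1)\,\varphi(\bar{d}_2)=(E_{A,C}\otimes t)(E_{C,B'}\otimes t')=E_{A,B'}\otimes(tt')$, and in $\TL_n(\delta-1)$ one has $tt'=(\delta-1)^{N_1(d_1,d_2)}\,(t\circ t')$; thus the two sides agree under $\varphi$. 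A bijective ring homomorphism automatically preserves the identity element, so $\varphi$ is an algebra isomorphism, establishing both isomorphisms in the statement.

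For the module statement I would appeal to the classical Morita equivalence of $\Mat_m(R)$ with $R$, for $R=\TL_n(\delta-1)$ and $m=\binom{k}{n}=\rank_\Bbbk Q_n$: every left $\Mat_m(R)$-module is isomorphic to $N^{\oplus m}$ for some left $R$-module $N$, with $\Mat_m(R)$ acting by matrix multiplication on column vectors, and $N^{\oplus m}\cong\Bbbk^m\otimes_\Bbbk N\cong Q_n\otimes N$, the $\End_\Bbbk(Q_n)$-factor acting on $Q_n$ and the $\TL_n(\delta-1)$-factor on $N$. Transporting this along $\varphi$ realizes every $X(n)$-module in the asserted form $Q_n\otimes N$. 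The only step that takes any real care is the product computation above, specifically the assertion that $N_1(d_1,d_2)$ and the triple of $d_1\circ d_2$ are governed entirely by the part of $\Gamma(d_1,d_2)$ lying over the common frame $C$; this rests on the elementary observation that, once $\bott{d_1}=\topp{d_2}$, all middle vertices outside $C$ are isolated on both sides and therefore produce neither loops nor new connections, so that $\Gamma(d_1,d_2)$ reproduces the Temperley--Lieb composite $\Gamma(t,t')$ and nothing more.
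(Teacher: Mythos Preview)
Your proof is correct and follows essentially the same approach as the paper: both use the triple bijection of Lemma~\ref{l:triples} together with the multiplication rule of Theorem~\ref{t:bar-mult}(a) to identify $X(n)$ with $\Mat_{\binom{k}{n}}(\TL_n(\delta-1))$, and both invoke standard Morita theory for the module statement. Your write-up is simply more explicit about the map $\varphi$ and the verification that $N_1(d_1,d_2)$ and $d_1\circ d_2$ are determined by the Temperley--Lieb composite $\Gamma(t,t')$ on the common frame $C$, which the paper leaves largely to the reader.
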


\begin{proof}
Let $(A_i,d_i',B_i)$ be the triples in Lemma~\ref{l:triples}
corresponding to diagrams $d_i \in \D_n(k)$, for $i = 1,2$.
Theorem~\ref{t:bar-mult} implies that
\[
\bar{d}_1 \bar{d}_2 =
\begin{cases}
  (\delta-1)^N \bar{d}_3 = & \text{ if
  } B_1 = A_2 \\ 0 & \text{ otherwise}
\end{cases}
\]
where, in the nonzero case, $d'_3 = d'_1 \circ d'_2$ in the partition
monoid $\Ptn_n$ and the triple corresponding to $d_3 \in \D_n(k)$ is
$(A_1,d'_3, B_2)$. Notice that $d'_1 d'_2 = (\delta-1)^N d'_3$
in the Temperley--Lieb algebra $\TL_n(\delta-1)$.  This proves
the first claim. The second claim follows from the well known
isomorphism
\[
\Mat_m\big(\mathcal{A}) \cong \Mat_m(\Bbbk) \otimes \mathcal{A}
\]
as $\Bbbk$-algebras, for any $\Bbbk$-algebra $\mathcal{A}$ and any
$m$. The isomorphism is given for any $a_{ij} \in \mathcal{A}$ by
$\sum a_{ij} e_{ij} \mapsto \sum e_{ij} \otimes a_{ij}$, where
$e_{ij}$ is the $(i,j)$th matrix unit. The final claim is a standard
feature of Morita theory. The action of
\[
\Mat_{\binom{k}{n}}(\Bbbk) \otimes \TL_n(\delta-1)
\cong \End_\Bbbk(Q_n) \otimes \TL_n(\delta-1)
\]
on $Q_n \otimes N$ is the obvious one, in which $(f\otimes t)(A
\otimes v) = f(A) \otimes tv$, for any $f \in \End_\Bbbk(Q_n)$, $t \in
\TL_n(\delta-1)$, $A \in Q_n$, $v \in N$.
\end{proof}

\begin{rmk}\label{r:Morita}
(i) Theorem~\ref{t:Xn} implies that $X(n)$ is Morita equivalent to
  $\TL_n(\delta)$; that is, there is a category equivalence between
  their (left, or, equivalently, right) modules. In particular, this
  implies (rather trivially) that $\PTL_k(\delta)$ is an iterated
  inflation of Temperley--Lieb algebras, in the sense of
  \cites{KX:99,KX:01,Green-Paget}.

(ii) The bijection in Lemma~\ref{l:triples} between $\D_n(k)$ and
  triples induces an isomorphism $X(n) \cong Q_n \otimes
  \TL_n(\delta-1) \otimes Q_n^*$, where $Q_n^* := \Hom_\Bbbk(Q_n,
  \Bbbk)$ is the linear dual of $Q_n$.  This is an isomorphism of
  $\Bbbk$-modules, and also an isomorphism of $\Bbbk$-algebras, where
  $Q_n^* \otimes Q_n$ is identified with $\End_\Bbbk(Q_n)$ by the
  usual isomorphism.
\end{rmk}

\begin{thm}\label{t:ss}
Suppose that $\Bbbk$ is a field. Then:
  \begin{enumerate}
  \item $X(n)$ is semisimple if and only if the same is true of
    $\TL_n(\delta-1)$.
  \item If $X(n)$ is semisimple, its simple modules are of the form
    $Q_n \otimes \TL^\lambda$, where $\TL^\lambda$ is a simple
    $\TL_n(\delta-1)$-module.
  \item $\PTL_k(\delta)$ is semisimple for any $\delta \in
    \Bbbk$ such that $\TL_n(\delta-1)$ is semisimple for all
    $n=0,1,\dots, k$.
  \end{enumerate}
\end{thm}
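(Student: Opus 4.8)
The plan is to deduce Theorem~\ref{t:ss} directly from the structural results already established, namely the ideal decomposition of Theorem~\ref{t:Xn-dec} and the matrix-algebra identification of Theorem~\ref{t:Xn}.

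\textbf{Part (a).} I would invoke Theorem~\ref{t:Xn}, which gives $X(n) \cong \Mat_{\binom{k}{n}}(\Bbbk) \otimes \TL_n(\delta-1)$. For a field $\Bbbk$, the classical Morita-invariance of semisimplicity says that a full matrix algebra $\Mat_m(\mathcal{A})$ over a $\Bbbk$-algebra $\mathcal{A}$ is semisimple (Artinian) if and only if $\mathcal{A}$ is; equivalently, the Jacobson radical satisfies $\operatorname{rad}(\Mat_m(\mathcal{A})) = \Mat_m(\operatorname{rad}\mathcal{A})$. Apply this with $\mathcal{A} = \TL_n(\delta-1)$, which is finite-dimensional over the field $\Bbbk$, to conclude that $X(n)$ is semisimple iff $\TL_n(\delta-1)$ is.

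\textbf{Part (b).} Again from Theorem~\ref{t:Xn}, every $X(n)$-module has the form $Q_n \otimes N$ with $N$ a $\TL_n(\delta-1)$-module, and this correspondence $N \mapsto Q_n \otimes N$ is the Morita equivalence of Remark~\ref{r:Morita}(i). Morita equivalences preserve and reflect simplicity, so when $X(n)$ is semisimple its simple modules are exactly $Q_n \otimes \TL^\lambda$ as $\TL^\lambda$ ranges over the simple $\TL_n(\delta-1)$-modules.

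\textbf{Part (c).} By Theorem~\ref{t:Xn-dec}, $\PTL_k(\delta) = \bigoplus_{n=0}^k X(n)$ as a direct sum of two-sided ideals with $X(n)X(n')=0$ for $n\ne n'$; since each $X(n)$ contains the corresponding part of the identity (the sum of $\bar{d}$ over the relevant balanced diagrams with $n$ edges that are identities on their frame), this is in fact a direct product decomposition of algebras, $\PTL_k(\delta) \cong \prod_{n=0}^k X(n)$. A finite direct product of algebras is semisimple iff each factor is. Hence, if $\TL_n(\delta-1)$ is semisimple for every $n = 0,1,\dots,k$, then by part (a) each $X(n)$ is semisimple, and therefore so is $\PTL_k(\delta)$. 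I anticipate that the only point requiring a word of care — the ``main obstacle,'' though it is minor — is noting that the ideal decomposition of Theorem~\ref{t:Xn-dec} is genuinely a unital direct product (so that semisimplicity passes between the whole algebra and its factors), which follows once one checks that $1 \in \PTL_k(\delta)$ decomposes as a sum of central idempotents, one in each $X(n)$; everything else is a formal appeal to Morita theory and the Artin--Wedderburn philosophy already set up in the preceding section.
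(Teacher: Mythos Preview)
Your proposal is correct and follows essentially the same approach as the paper: both deduce (a) and (b) from the matrix-algebra identification in Theorem~\ref{t:Xn}, and (c) from the ideal decomposition in Theorem~\ref{t:Xn-dec} together with (a). Your flagged ``main obstacle'' is not really one: once a unital algebra is written as a direct sum of pairwise orthogonal two-sided ideals, the identity automatically decomposes as a sum of central idempotents (one per summand), so the passage to a direct product of unital algebras is immediate and needs no separate verification.
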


\begin{proof}
(a) follows from Theorem \ref{t:Xn}; see e.g.~\cite{Munn}*{Lemma 4.5}.
  Part (b) is clear from the isomorphism in Theorem~\ref{t:Xn}, and
  part (c) follows from part (a).  
\end{proof}

\begin{rmk}\label{r:ss}
By Theorem~\ref{t:ss-TL}, if $q$ is a nonzero element of the field
$\Bbbk$ such that $\Brackets{n}^!_q \ne 0$, then $\PTL_n(1
\pm(q+q^{-1}))$ is semisimple over $\Bbbk$. In particular, this holds
whenever $q$ is not a root of unity.
\end{rmk}

\section{Representations of $\TL_k(\delta)$}\label{s:TL-mod}\noindent
The purpose of this section is to recall \cite{GL:96}*{Example 1.4}
the standard combinatorial construction of the cell modules for a
Temperley--Lieb algebra $\TL_k(\delta)$. A \emph{planar involution}
with $\lambda$ fixed points is a sequence of $k$ points arranged in a
line with $k-\lambda$ of them joined in pairs and $\lambda$ of them
with an ``end'' attached. The pairs correspond to interchanges and
ends to fixed points. The planar condition requires that the
involution can be drawn in a half-plane if the ends are extended to
infinity, without intersections. For example, the two diagrams
\[
\begin{tikzpicture}[scale = 0.35,thick, baseline={(0,-1.1ex/2)}] 
\tikzstyle{vertex} = [shape = circle, minimum size = 4pt, inner sep = 1pt] 
\node[vertex] (G-1) at (0.0, 0) [shape = circle, draw,fill=black] {}; 
\node[vertex] (G-2) at (1.5, 0) [shape = circle, draw, fill=black] {}; 
\node[vertex] (G-3) at (3.0, 0) [shape = circle, draw, fill=black] {};
\node[vertex] (G-4) at (4.5, 0) [shape = circle, draw, fill=black] {};
\node[vertex] (G-5) at (6.0, 0) [shape = circle, draw, fill=black] {};
\node[vertex] (G-6) at (7.5, 0) [shape = circle, draw, fill=black] {};
\node[vertex] (G-7) at (9.0, 0) [shape = circle, draw,fill=black] {};
\draw (G-3) .. controls +(0.5, -0.5) and +(-0.5, -0.5) .. (G-4);
\draw (G-2) .. controls +(1, -1) and +(-1, -1) .. (G-5);
\draw (G-1) -- (0.0,-1.8);
\draw (G-6) -- (7.5,-1.8);
\draw (G-7) -- (9.0,-1.8);
\end{tikzpicture}
\qquad\qquad\qquad
\begin{tikzpicture}[scale = 0.35,thick, baseline={(0,-1.1ex/2)}] 
\tikzstyle{vertex} = [shape = circle, minimum size = 4pt, inner sep = 1pt] 
\node[vertex] (G-1) at (0.0, 0) [shape = circle, draw,fill=black] {}; 
\node[vertex] (G-2) at (1.5, 0) [shape = circle, draw, fill=black] {}; 
\node[vertex] (G-3) at (3.0, 0) [shape = circle, draw, fill=black] {};
\node[vertex] (G-4) at (4.5, 0) [shape = circle, draw, fill=black] {};
\node[vertex] (G-5) at (6.0, 0) [shape = circle, draw, fill=black] {};
\node[vertex] (G-6) at (7.5, 0) [shape = circle, draw, fill=black] {};
\node[vertex] (G-7) at (9.0, 0) [shape = circle, draw,fill=black] {};
\draw (G-1) .. controls +(0.5, -0.5) and +(-0.5, -0.5) .. (G-2);
\draw (G-5) .. controls +(0.5, -0.5) and +(-0.5, -0.5) .. (G-6);
\draw (G-3) -- (3.0,-1.8);
\draw (G-4) -- (4.5,-1.8);
\draw (G-7) -- (9.0,-1.8);
\end{tikzpicture}
\]
depict planar involutions on $k=7$ points with $3$ fixed points. One
can think of planar involutions as products of disjoint cycles of
length $\le 2$, or as ``half'' Temperley--Lieb diagrams. Given two
planar involutions with the same number of fixed points, there is a
unique way to join the ends to create a Temperley--Lieb diagram. For
instance, the above involutions join to make the Temperley--Lieb
diagram
\[
\begin{tikzpicture}[scale = 0.35,thick, baseline={(0,-1ex/2)}] 
\tikzstyle{vertex} = [shape = circle, minimum size = 4pt, inner sep = 1pt] 
\node[vertex] (G--7) at (9.0, -1) [shape = circle, draw,fill=black] {}; 
\node[vertex] (G-7) at (9.0, 1) [shape = circle, draw,fill=black] {}; 
\node[vertex] (G--6) at (7.5, -1) [shape = circle, draw,fill=black] {}; 
\node[vertex] (G--5) at (6.0, -1) [shape = circle, draw,fill=black] {}; 
\node[vertex] (G--4) at (4.5, -1) [shape = circle, draw,fill=black] {}; 
\node[vertex] (G-6) at (7.5, 1) [shape = circle, draw,fill=black] {}; 
\node[vertex] (G--3) at (3.0, -1) [shape = circle, draw,fill=black] {}; 
\node[vertex] (G-1) at (0.0, 1) [shape = circle, draw,fill=black] {}; 
\node[vertex] (G--2) at (1.5, -1) [shape = circle, draw,fill=black] {}; 
\node[vertex] (G--1) at (0.0, -1) [shape = circle, draw,fill=black] {}; 
\node[vertex] (G-2) at (1.5, 1) [shape = circle, draw,fill=black] {}; 
\node[vertex] (G-5) at (6.0, 1) [shape = circle, draw,fill=black] {}; 
\node[vertex] (G-3) at (3.0, 1) [shape = circle, draw,fill=black] {}; 
\node[vertex] (G-4) at (4.5, 1) [shape = circle, draw,fill=black] {}; 
\draw[] (G-7) .. controls +(0, -1) and +(0, 1) .. (G--7); 
\draw[] (G--6) .. controls +(-0.5, 0.5) and +(0.5, 0.5) .. (G--5); 
\draw[] (G-6) .. controls +(-1, -1) and +(1, 1) .. (G--4); 
\draw[] (G-1) .. controls +(1, -1) and +(-1, 1) .. (G--3); 
\draw[] (G--2) .. controls +(-0.5, 0.5) and +(0.5, 0.5) .. (G--1); 
\draw[] (G-2) .. controls +(1, -1) and +(-1, -1) .. (G-5); 
\draw[] (G-3) .. controls +(0.5, -0.5) and +(-0.5, -0.5) .. (G-4); 
\end{tikzpicture}
\]
with the leftmost involution at the top and rightmost involution
(inverted) at the bottom.

By labeling the fixed points and left endpoint of interchanges by $1$
and labeling right endpoints of interchanges by $-1$, a planar
involution of length $k$ produces a sequence $a=(a_1,\dots,a_k)$
such that
\begin{enumerate}\renewcommand{\labelenumi}{(\roman{enumi})}
\item $a_i = \pm 1$ for all $1 \le i \le k$.
\item Each partial sum $a_1 + \cdots + a_n \ge 0$ for all $1 \le n \le
  k$.
\end{enumerate}
We call any sequence satisfying these conditions a
\emph{Temperley--Lieb path}.  Given any Temperley--Lieb path, we can
construct a unique planar involution that produces the sequence.  To
see this, pair each $i$ such that some $j>i$ exists with
$a_i+a_{i+1}+\cdots + a_j = 0$ with the unique minimal such~$j$. Such
pairings define the interchanges in the involution, and all unpaired
vertices are fixed points. Thus, we have a bijection between the sets
of planar involutions and Temperley--Lieb paths of the same length, so
we may as well identify these sets.

A Temperley--Lieb diagram $t$ acts on a planar involution $a$ by the
usual diagram multiplication, producing a multiple of another planar
involution $b$ having at most as many fixed points as $a$. So the free
$\Bbbk$-module $W_0$ on the set of planar involutions of length $k$ is
a $\TL_k(\delta)$-module.  Let $W_0^{\le \lambda}$ (resp.,
$W_0^{<\lambda}$) denote the span of the planar involutions with at
most (resp., fewer than) $\lambda$ fixed points. Then
\[
\TL^\lambda:= W_0^{\le \lambda}/W_0^{<\lambda}
\]
is a $\TL_k(\delta)$-module, for any $\lambda$ such that $k-\lambda$
is an even number.

\begin{thm}[\cite{GL:96}] 
Let $\Bbbk$ be a commutative unital ring. The collection
$\{\TL^\lambda \mid \lambda \equiv k~(\text{\upshape{mod} } 2)\}$ is a
complete set of cell modules for $\TL_k(\delta)$. If $\Bbbk$ is a
field and $\delta \in \Bbbk$ is chosen such that $\TL_k(\delta)$ is
semisimple, then the same set is a complete set of
isomorphism classes of simple $\TL_k(\delta)$-modules.
\end{thm}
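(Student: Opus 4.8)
This is \cite{GL:96}*{Example 1.4}; the plan is to record explicitly the cell datum on $\TL_k(\delta)$ underlying that example, to check that its cell modules are precisely the modules $\TL^\lambda$ constructed above, and then to read off the semisimple statement from the general structure theory of cellular algebras.

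For the cell datum take the weight poset $\Lambda = \{\lambda \in \Z : 0 \le \lambda \le k,\ \lambda \equiv k \pmod{2}\}$, ordered by the usual order on integers, and for $\lambda \in \Lambda$ let $M(\lambda)$ be the set of planar involutions on $k$ points with $\lambda$ fixed points (equivalently, the Temperley--Lieb paths of that type). Let $*$ be the $\Bbbk$-linear anti-automorphism of $\TL_k(\delta)$ obtained by reflecting a diagram about a horizontal axis, equivalently the map fixing every $e_i$, and for $S,T \in M(\lambda)$ let $c^\lambda_{S,T}$ be the Temperley--Lieb $k$-diagram formed, as in the discussion above, by placing $S$ on the top, the reflection of $T$ on the bottom, and joining the ends. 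Every Temperley--Lieb $k$-diagram has a well-defined number $\lambda \in \Lambda$ of through strands and is reconstructed uniquely from the pair of half-diagrams cut out of it; hence $\{c^\lambda_{S,T}\}$ is simply the diagram basis relabelled, and $(c^\lambda_{S,T})^* = c^\lambda_{T,S}$. This gives axioms (C1) and (C2) of a cell datum.

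The remaining axiom is the multiplicative one: for every $a \in \TL_k(\delta)$,
\[
a\, c^\lambda_{S,T} \;\equiv\; \sum_{S' \in M(\lambda)} r_a(S',S)\, c^\lambda_{S',T} \pmod{\TL_k(\delta)^{<\lambda}},
\]
where $\TL_k(\delta)^{<\lambda}$ is the span of the $c^\mu_{U,V}$ with $\mu < \lambda$ and the scalar $r_a(S',S) \in \Bbbk$ does not depend on $T$. By $\Bbbk$-linearity it suffices to take $a$ a Temperley--Lieb $k$-diagram. Stacking $a$ on top of $c^\lambda_{S,T}$ and discarding the closed interior loops yields $\delta^{N}$ times a single Temperley--Lieb diagram whose number of through strands is at most $\lambda$; if it is strictly less, the product lies in $\TL_k(\delta)^{<\lambda}$, which is manifestly a two-sided ideal because diagram composition never increases the number of through strands; if it equals $\lambda$, then every through strand of $c^\lambda_{S,T}$ survives and therefore still reaches the bottom row in the positions prescribed by $T$, so $a\, c^\lambda_{S,T} = \delta^{N} c^\lambda_{S',T}$ with $N$ and $S'$ depending only on $a$ and $S$. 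This diagram-surgery bookkeeping is the one genuinely nontrivial point, and I expect it to be the main obstacle; granting it, $(\Lambda, M, C, *)$ is a cell datum, so $\TL_k(\delta)$ is cellular over an arbitrary commutative unital ring $\Bbbk$.

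Finally, the cell module attached to $\lambda$ is by construction the free $\Bbbk$-module on $M(\lambda)$ with $\TL_k(\delta)$ acting via the coefficients $r_a(\cdot,\cdot)$, and this is exactly the module $\TL^\lambda$ above: $W_0$ is the free module on all planar involutions, $W_0^{\le\lambda}$ is a submodule because a diagram acting on an involution cannot increase its number of fixed points, and the basis and action induced on $W_0^{\le\lambda}/W_0^{<\lambda}$ are precisely $M(\lambda)$ and the $r_a$. Hence $\TL^\lambda \cong W(\lambda)$, so $\{\TL^\lambda \mid \lambda \equiv k \pmod{2}\}$ is the full set of cell modules for this cell datum, proving the first claim. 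For the second, recall from \cite{GL:96} that each $W(\lambda)$ carries a canonical bilinear form compatible with $*$, that a cellular algebra over a field is semisimple if and only if all of these forms are nondegenerate, and that when this holds the $W(\lambda)$ constitute a complete set of pairwise non-isomorphic simple modules. Therefore, whenever $\delta$ is chosen so that $\TL_k(\delta)$ is semisimple (as in Appendix~\ref{a:TL}), $\{\TL^\lambda \mid \lambda \equiv k \pmod{2}\}$ is a complete irredundant list of its simple modules.
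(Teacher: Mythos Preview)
The paper gives no proof of this theorem at all; it is stated purely as a citation of \cite{GL:96}*{Example 1.4}, with the preceding paragraphs merely setting up the notation needed to state it. Your proposal is therefore not competing with any argument in the paper---it is supplying one where the paper chose to omit it. The content of your proof is correct and is exactly the standard Graham--Lehrer verification: the diagram basis, indexed by pairs of half-diagrams with matching numbers of through strands, is a cellular basis with respect to the flip anti-involution, the multiplicative axiom follows from the fact that stacking cannot increase the number of through strands and leaves the untouched half-diagram intact, and the identification of the resulting cell modules with the quotients $\TL^\lambda = W_0^{\le\lambda}/W_0^{<\lambda}$ is immediate. Your appeal to the general structure theory of cellular algebras for the semisimple statement is also the intended route.
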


\begin{rmk}\label{r:TL-indexing}
There is another way to index the cell modules for $\TL_k(\delta)$, by
the set $\Lambda_k$ of partitions of $k$ with at most two parts.  The
map $(\lambda_1,\lambda_2) \mapsto \lambda_1-\lambda_2$ defines a
bijection of $\Lambda_k$ with the set of integers in $\{0,1, \dots,
k\}$ which are congruent to $k$ mod $2$, where the value $\lambda_2=0$
is allowed. In this indexing scheme, for
$\lambda=(\lambda_1,\lambda_2)$ in $\Lambda_k$, the notation
$\TL^\lambda = W_0^{\lessdom \lambda}/W_0^{\slessdom \lambda}$
replaces the previous notation $\TL^m = W_0^{\le m}/W_0^{<m}$, where
$m = \lambda_1-\lambda_2$. Here $\dom$ is the usual dominance order on
partitions. We will switch to the partition notation starting in
\S\ref{s:reps}.
\end{rmk}

\section{Motzkin paths and representations}\label{s:Motz}\noindent
We now recall the combinatorial construction of the cell modules for
the Motzkin algebra, which generalizes the previous section. This will
be applied in the next section to construct the cell modules for
$\PTL_k(\delta)$. The remainder of this section closely follows
\cite{BH}*{\S4}, to which the reader should refer for additional
details.

A \emph{Motzkin path} of length $k$ is a sequence $a=(a_1,
\dots, a_k)$ such that $a_i \in \{-1,0,1\}$ and each partial sum $a_1
+ \cdots + a_n \ge 0$ for all $1 \le n \le k$. The \emph{rank} of a
Motzkin path $a=(a_i)$ is defined to be
\[
\rank(a):= a_1+\cdots+a_k.
\]
For each index $i$ with $a_i=1$, let $j$ be the smallest index (if
any) such that $i<j \le k$ and $a_i+a_{i+1}+\cdots+a_j=0$. Whenever
this happens, $(a_i,a_j) = (1,-1)$ are said to be \emph{paired};
otherwise, $a_i=1$ is \emph{unpaired}. By omitting the zeros in a
Motzkin path we obtain a Temperley--Lieb path in the sense of the
previous section.  Connecting paired indices by an edge and extending
unpaired indices by a line to infinity, we recover the planar
involution of that path. By including the discarded zeros as isolated
vertices, we obtain a graph on $[k]$ called a $1$-\emph{factor}.
Since paired indices cancel one another in the sum,
\[
\rank(a) = \text{ the number of fixed points (lines to infinity)}
\]
in the corresponding $1$-factor. 
For example, the graph
\[\alpha = \;
\begin{tikzpicture}[scale = 0.35,thick, baseline={(0,-1.1ex/2)}] 
\tikzstyle{vertex} = [shape = circle, minimum size = 4pt, inner sep = 1pt] 
\node[vertex] (G-1) at (0.0, 0) [shape = circle, draw,fill=black] {}; 
\node[vertex] (G-2) at (1.5, 0) [shape = circle, draw, fill=black] {}; 
\node[vertex] (G-3) at (3.0, 0) [shape = circle, draw, fill=black] {};
\node[vertex] (G-4) at (4.5, 0) [shape = circle, draw, fill=black] {};
\node[vertex] (G-5) at (6.0, 0) [shape = circle, draw, fill=black] {};
\node[vertex] (G-6) at (7.5, 0) [shape = circle, draw, fill=black] {};
\node[vertex] (G-7) at (9.0, 0) [shape = circle, draw,fill=black] {};
\node[vertex] (G-8) at (10.5, 0) [shape = circle, draw, fill=black] {};
\node[vertex] (G-9) at (12.0, 0) [shape = circle, draw, fill=black] {};
\node[vertex] (G-10) at (13.5, 0) [shape = circle, draw, fill=black] {};
\draw (G-3) .. controls +(0.5, -0.5) and +(-0.5, -0.5) .. (G-4);
\draw (G-8) .. controls +(0.5, -0.5) and +(-0.5, -0.5) .. (G-10);
\draw (G-2) .. controls +(1, -1) and +(-1, -1) .. (G-6);
\draw (G-1) -- (0,-1.7);
\draw (G-7) -- (9.0,-1.7);
\end{tikzpicture}
\]
is the $1$-factor produced by the Motzkin path $(1,1,1,-1,0,-1,1,1,0,-1)$.

In general, a $1$-factor on $k$ vertices is a graph that gives a
planar involution once its isolated points are removed. This means that
fixed points of the involution may not appear between paired vertices.
Labeling paired vertices in a $1$-factor $\alpha$ by $1,-1$
respectively, labeling fixed points by $1$, and labeling all other
vertices by $0$ produces a Motzkin path $a$ given by the sequence of
labels whose corresponding $1$-factor is $\alpha$.  So there is a
bijection between Motzkin paths and $1$-factors of the same length.
From now on, we identify Motzkin paths with $1$-factors
by means of this bijection.

\begin{rmk}
In \cite{BH}, the fixed points of a $1$-factor are depicted by
white-colored vertices instead of lines to infinity.
\end{rmk}

For any given pair $(\alpha,\beta)$, where $\alpha,\beta$ are
$1$-factors on $k$ vertices of the same rank $\lambda$, there is a
unique Motzkin $k$-diagram $C^\lambda_{\alpha,\,\beta}$ such that the
fixed points in $\alpha$ are connected to those in $\beta$. For
example, if $\alpha$ is the $1$-factor displayed above and $\beta$ the
$1$-factor (of rank $2$) below
\begin{align*}
\beta &= \;
\begin{tikzpicture}[scale = 0.35,thick, baseline={(0,-1.1ex/2)}] 
\tikzstyle{vertex} = [shape = circle, minimum size = 4pt, inner sep = 1pt] 
\node[vertex] (G-1) at (0.0, 0) [shape = circle, draw, fill=black] {}; 
\node[vertex] (G-2) at (1.5, 0) [shape = circle, draw, fill=black] {}; 
\node[vertex] (G-3) at (3.0, 0) [shape = circle, draw, fill=black] {};
\node[vertex] (G-4) at (4.5, 0) [shape = circle, draw,fill=black] {};
\node[vertex] (G-5) at (6.0, 0) [shape = circle, draw, fill=black] {};
\node[vertex] (G-6) at (7.5, 0) [shape = circle, draw, fill=black] {};
\node[vertex] (G-7) at (9.0, 0) [shape = circle, draw, fill=black] {};
\node[vertex] (G-8) at (10.5, 0) [shape = circle, draw, fill=black] {};
\node[vertex] (G-9) at (12.0, 0) [shape = circle, draw,fill=black] {};
\node[vertex] (G-10) at (13.5, 0) [shape = circle, draw,fill=black] {};
\draw (G-6) .. controls +(0.5, -0.5) and +(-0.5, -0.5) .. (G-7);
\draw (G-5) .. controls +(1, -1) and +(-1, -1) .. (G-8);
\draw (G-4) -- (4.5,-1.7);
\draw (G-10) -- (13.5,-1.7);
\end{tikzpicture} \\
\intertext{%
then $C^\lambda_{\alpha,\beta}$ is obtained by reflecting $\beta$ across its
horizontal axis and then drawing edges connecting the fixed points
in $\alpha$, $\beta$ in order, which gives}
C_{\alpha,\beta}^2 &= \; \shade{
\begin{tikzpicture}[scale = 0.35,thick, baseline={(0,-1ex/2)}] 
\tikzstyle{vertex} = [shape = circle, minimum size = 4pt, inner sep = 1pt] 
\node[vertex] (G--10) at (13.5, -1) [shape = circle, draw,fill=black] {}; 
\node[vertex] (G-7) at (9.0, 1) [shape = circle, draw,fill=black] {}; 
\node[vertex] (G--9) at (12.0, -1) [shape = circle, draw,fill=black] {}; 
\node[vertex] (G--8) at (10.5, -1) [shape = circle, draw,fill=black] {}; 
\node[vertex] (G--5) at (6.0, -1) [shape = circle, draw,fill=black] {}; 
\node[vertex] (G--7) at (9.0, -1) [shape = circle, draw,fill=black] {}; 
\node[vertex] (G--6) at (7.5, -1) [shape = circle, draw,fill=black] {}; 
\node[vertex] (G--4) at (4.5, -1) [shape = circle, draw,fill=black] {}; 
\node[vertex] (G-1) at (0.0, 1) [shape = circle, draw,fill=black] {}; 
\node[vertex] (G--3) at (3.0, -1) [shape = circle, draw,fill=black] {}; 
\node[vertex] (G--2) at (1.5, -1) [shape = circle, draw,fill=black] {}; 
\node[vertex] (G--1) at (0.0, -1) [shape = circle, draw,fill=black] {}; 
\node[vertex] (G-2) at (1.5, 1) [shape = circle, draw,fill=black] {}; 
\node[vertex] (G-6) at (7.5, 1) [shape = circle, draw,fill=black] {}; 
\node[vertex] (G-3) at (3.0, 1) [shape = circle, draw,fill=black] {}; 
\node[vertex] (G-4) at (4.5, 1) [shape = circle, draw,fill=black] {}; 
\node[vertex] (G-5) at (6.0, 1) [shape = circle, draw,fill=black] {}; 
\node[vertex] (G-8) at (10.5, 1) [shape = circle, draw,fill=black] {}; 
\node[vertex] (G-10) at (13.5, 1) [shape = circle, draw,fill=black] {}; 
\node[vertex] (G-9) at (12.0, 1) [shape = circle, draw,fill=black] {}; 
\draw (G-7) .. controls +(1, -1) and +(-1, 1) .. (G--10); 
\draw (G--8) .. controls +(-0.8, 0.8) and +(0.8, 0.8) .. (G--5); 
\draw (G--7) .. controls +(-0.5, 0.5) and +(0.5, 0.5) .. (G--6); 
\draw (G-1) .. controls +(1, -1) and +(-1, 1) .. (G--4); 
\draw (G-2) .. controls +(0.9, -0.9) and +(-0.9, -0.9) .. (G-6); 
\draw (G-3) .. controls +(0.5, -0.5) and +(-0.5, -0.5) .. (G-4); 
\draw (G-8) .. controls +(0.6, -0.6) and +(-0.6, -0.6) .. (G-10); 
\end{tikzpicture}
} \; .
\end{align*}
Notice that the zeros in the Motzkin paths corresponding to
$\alpha,\beta$ label the isolated vertices in the diagram
$C^\lambda_{\alpha,\beta}$.  For any $k$, the disjoint
union
\begin{equation}
  \textstyle \bigsqcup_{\lambda=0}^k \{ C^\lambda_{\alpha,\,\beta}
  \mid \rank(\alpha) = \rank(\beta)=\lambda\}
\end{equation}
is a cellular basis of $\M_k(\delta)$. This is the basis of Motzkin
$k$-diagrams. 

Now we describe the cell modules for the Motzkin algebra.  For a
Motzkin diagram $d$ and a Motzkin path $a$ (viewed as a $1$-factor) of
the same length, the Motzkin path $da$ is given by
\begin{equation}\label{e:path-action}
  da = \delta^{N(d,a)} b
\end{equation}
where the multiplication is carried out by the usual graphical
stacking procedure. The integer $N(d,a)$ is the number of loops in the
bottom row of $\Gamma(d,a)$, and $b$ is the resulting $1$-factor in
the top row of $\Gamma(d,a)$, after erasing the second row of
vertices.

For example, if $d$ is the Motzkin diagram and $a$ the
$1$-factor given by the pictures
\begin{align*}
d &= \;
\shade{\begin{tikzpicture}[scale = 0.35,thick, baseline={(0,-1ex/2)}] 
\tikzstyle{vertex} = [shape = circle, minimum size = 4pt, inner sep = 1pt] 
\node[vertex] (G--14) at (19.5, -1) [shape = circle, draw,fill=black] {}; 
\node[vertex] (G--13) at (18.0, -1) [shape = circle, draw,fill=black] {}; 
\node[vertex] (G--12) at (16.5, -1) [shape = circle, draw,fill=black] {}; 
\node[vertex] (G-11) at (15.0, 1) [shape = circle, draw,fill=black] {}; 
\node[vertex] (G--11) at (15.0, -1) [shape = circle, draw,fill=black] {}; 
\node[vertex] (G-10) at (13.5, 1) [shape = circle, draw,fill=black] {}; 
\node[vertex] (G--10) at (13.5, -1) [shape = circle, draw,fill=black] {}; 
\node[vertex] (G-9) at (12.0, 1) [shape = circle, draw,fill=black] {}; 
\node[vertex] (G--9) at (12.0, -1) [shape = circle, draw,fill=black] {}; 
\node[vertex] (G-8) at (10.5, 1) [shape = circle, draw,fill=black] {}; 
\node[vertex] (G--8) at (10.5, -1) [shape = circle, draw,fill=black] {}; 
\node[vertex] (G--7) at (9.0, -1) [shape = circle, draw,fill=black] {}; 
\node[vertex] (G--6) at (7.5, -1) [shape = circle, draw,fill=black] {}; 
\node[vertex] (G--5) at (6.0, -1) [shape = circle, draw,fill=black] {}; 
\node[vertex] (G--4) at (4.5, -1) [shape = circle, draw,fill=black] {}; 
\node[vertex] (G-5) at (6.0, 1) [shape = circle, draw,fill=black] {}; 
\node[vertex] (G--3) at (3.0, -1) [shape = circle, draw,fill=black] {}; 
\node[vertex] (G--2) at (1.5, -1) [shape = circle, draw,fill=black] {}; 
\node[vertex] (G--1) at (0.0, -1) [shape = circle, draw,fill=black] {}; 
\node[vertex] (G-1) at (0.0, 1) [shape = circle, draw,fill=black] {}; 
\node[vertex] (G-4) at (4.5, 1) [shape = circle, draw,fill=black] {}; 
\node[vertex] (G-2) at (1.5, 1) [shape = circle, draw,fill=black] {}; 
\node[vertex] (G-3) at (3.0, 1) [shape = circle, draw,fill=black] {}; 
\node[vertex] (G-6) at (7.5, 1) [shape = circle, draw,fill=black] {}; 
\node[vertex] (G-7) at (9.0, 1) [shape = circle, draw,fill=black] {}; 
\node[vertex] (G-12) at (16.5, 1) [shape = circle, draw,fill=black] {}; 
\node[vertex] (G-13) at (18.0, 1) [shape = circle, draw,fill=black] {}; 
\node[vertex] (G-14) at (19.5, 1) [shape = circle, draw,fill=black] {}; 
\draw (G--14) .. controls +(-0.5, 0.5) and +(0.5, 0.5) .. (G--13); 
\draw (G-11) .. controls +(0.75, -1) and +(-0.75, 1) .. (G--12); 
\draw (G-10) .. controls +(0.75, -1) and +(-0.75, 1) .. (G--11); 
\draw (G-9) .. controls +(0.75, -1) and +(-0.75, 1) .. (G--10); 
\draw (G-8) .. controls +(0.75, -1) and +(-0.75, 1) .. (G--9); 
\draw (G--8) .. controls +(-0.5, 0.5) and +(0.5, 0.5) .. (G--7); 
\draw (G--6) .. controls +(-0.5, 0.5) and +(0.5, 0.5) .. (G--5); 
\draw (G-5) .. controls +(-0.75, -1) and +(0.75, 1) .. (G--4); 
\draw (G--2) .. controls +(-0.5, 0.5) and +(0.5, 0.5) .. (G--1); 
\draw (G-1) .. controls +(1, -1) and +(-1, -1) .. (G-4); 
\draw (G-2) .. controls +(0.5, -0.5) and +(-0.5, -0.5) .. (G-3); 
\draw (G-6) .. controls +(0.5, -0.5) and +(-0.5, -0.5) .. (G-7); 
\draw (G-13) .. controls +(0.5, -0.5) and +(-0.5, -0.5) .. (G-14); 
\end{tikzpicture} } \\
a &= \;
\begin{tikzpicture}[scale = 0.35,thick, baseline={(0,-1ex/2)}] 
\tikzstyle{vertex} = [shape = circle, minimum size = 4pt, inner sep = 1pt] 
\node[vertex] (G-11) at (15.0, 0) [shape = circle, draw,fill=black] {}; 
\node[vertex] (G-10) at (13.5, 0) [shape = circle, draw,fill=black] {}; 
\node[vertex] (G-9) at (12.0, 0) [shape = circle, draw,fill=black] {}; 
\node[vertex] (G-8) at (10.5, 0) [shape = circle, draw,fill=black] {}; 
\node[vertex] (G-5) at (6.0, 0) [shape = circle, draw,fill=black] {}; 
\node[vertex] (G-1) at (0.0, 0) [shape = circle, draw,fill=black] {}; 
\node[vertex] (G-4) at (4.5, 0) [shape = circle, draw,fill=black] {}; 
\node[vertex] (G-2) at (1.5, 0) [shape = circle, draw,fill=black] {}; 
\node[vertex] (G-3) at (3.0, 0) [shape = circle, draw,fill=black] {}; 
\node[vertex] (G-6) at (7.5, 0) [shape = circle, draw,fill=black] {}; 
\node[vertex] (G-7) at (9.0, 0) [shape = circle, draw,fill=black] {}; 
\node[vertex] (G-12) at (16.5, 0) [shape = circle, draw,fill=black] {}; 
\node[vertex] (G-13) at (18.0, 0) [shape = circle, draw,fill=black] {}; 
\node[vertex] (G-14) at (19.5, 0) [shape = circle, draw,fill=black] {}; 
\draw (G-5) .. controls +(1, -1) and +(-1, -1) .. (G-8); 
\draw (G-6) .. controls +(0.5, -0.5) and +(-0.5, -0.5) .. (G-7); 
\draw (G-2) .. controls +(0.5, -0.5) and +(-0.5, -0.5) .. (G-3); 
\draw (G-12) .. controls +(0.5, -0.5) and +(-0.5, -0.5) .. (G-13);
\draw (G-9) .. controls +(0.9, -0.9) and +(-0.9, -0.9) .. (G-11);
\draw (G-1) -- (0.0,-1.7);
\draw (G-4) -- (4.5,-1.7);
\draw (G-14) -- (19.5,-1.7);
\end{tikzpicture} 
\intertext{then the resulting $\Gamma(d,a)$ is the configuration}
\Gamma(d,a) &= \;
\begin{tikzpicture}[scale = 0.35,thick, baseline={(0,-1ex/2)}] 
\tikzstyle{vertex} = [shape = circle, minimum size = 4pt, inner sep = 1pt] 
\node[vertex] (G--14) at (19.5, -1) [shape = circle, draw,fill=black] {}; 
\node[vertex] (G--13) at (18.0, -1) [shape = circle, draw,fill=black] {}; 
\node[vertex] (G--12) at (16.5, -1) [shape = circle, draw,fill=black] {}; 
\node[vertex] (G-11) at (15.0, 1) [shape = circle, draw,fill=black] {}; 
\node[vertex] (G--11) at (15.0, -1) [shape = circle, draw,fill=black] {}; 
\node[vertex] (G-10) at (13.5, 1) [shape = circle, draw,fill=black] {}; 
\node[vertex] (G--10) at (13.5, -1) [shape = circle, draw,fill=black] {}; 
\node[vertex] (G-9) at (12.0, 1) [shape = circle, draw,fill=black] {}; 
\node[vertex] (G--9) at (12.0, -1) [shape = circle, draw,fill=black] {}; 
\node[vertex] (G-8) at (10.5, 1) [shape = circle, draw,fill=black] {}; 
\node[vertex] (G--8) at (10.5, -1) [shape = circle, draw,fill=black] {}; 
\node[vertex] (G--7) at (9.0, -1) [shape = circle, draw,fill=black] {}; 
\node[vertex] (G--6) at (7.5, -1) [shape = circle, draw,fill=black] {}; 
\node[vertex] (G--5) at (6.0, -1) [shape = circle, draw,fill=black] {}; 
\node[vertex] (G--4) at (4.5, -1) [shape = circle, draw,fill=black] {}; 
\node[vertex] (G-5) at (6.0, 1) [shape = circle, draw,fill=black] {}; 
\node[vertex] (G--3) at (3.0, -1) [shape = circle, draw,fill=black] {}; 
\node[vertex] (G--2) at (1.5, -1) [shape = circle, draw,fill=black] {}; 
\node[vertex] (G--1) at (0.0, -1) [shape = circle, draw,fill=black] {}; 
\node[vertex] (G-1) at (0.0, 1) [shape = circle, draw,fill=black] {}; 
\node[vertex] (G-4) at (4.5, 1) [shape = circle, draw,fill=black] {}; 
\node[vertex] (G-2) at (1.5, 1) [shape = circle, draw,fill=black] {}; 
\node[vertex] (G-3) at (3.0, 1) [shape = circle, draw,fill=black] {}; 
\node[vertex] (G-6) at (7.5, 1) [shape = circle, draw,fill=black] {}; 
\node[vertex] (G-7) at (9.0, 1) [shape = circle, draw,fill=black] {}; 
\node[vertex] (G-12) at (16.5, 1) [shape = circle, draw,fill=black] {}; 
\node[vertex] (G-13) at (18.0, 1) [shape = circle, draw,fill=black] {}; 
\node[vertex] (G-14) at (19.5, 1) [shape = circle, draw,fill=black] {}; 
\draw (G--14) .. controls +(-0.5, 0.5) and +(0.5, 0.5) .. (G--13); 
\draw (G-11) .. controls +(0.75, -1) and +(-0.75, 1) .. (G--12); 
\draw (G-10) .. controls +(0.75, -1) and +(-0.75, 1) .. (G--11); 
\draw (G-9) .. controls +(0.75, -1) and +(-0.75, 1) .. (G--10); 
\draw (G-8) .. controls +(0.75, -1) and +(-0.75, 1) .. (G--9); 
\draw (G--8) .. controls +(-0.5, 0.5) and +(0.5, 0.5) .. (G--7); 
\draw (G--6) .. controls +(-0.5, 0.5) and +(0.5, 0.5) .. (G--5); 
\draw (G-5) .. controls +(-0.75, -1) and +(0.75, 1) .. (G--4); 
\draw (G--2) .. controls +(-0.5, 0.5) and +(0.5, 0.5) .. (G--1); 
\draw (G-1) .. controls +(1, -1) and +(-1, -1) .. (G-4); 
\draw (G-2) .. controls +(0.5, -0.5) and +(-0.5, -0.5) .. (G-3); 
\draw (G-6) .. controls +(0.5, -0.5) and +(-0.5, -0.5) .. (G-7); 
\draw (G-13) .. controls +(0.5, -0.5) and +(-0.5, -0.5) .. (G-14);
\draw (G--5) .. controls +(1, -1) and +(-1, -1) .. (G--8); 
\draw (G--6) .. controls +(0.5, -0.5) and +(-0.5, -0.5) .. (G--7); 
\draw (G--2) .. controls +(0.5, -0.5) and +(-0.5, -0.5) .. (G--3); 
\draw (G--12) .. controls +(0.5, -0.5) and +(-0.5, -0.5) .. (G--13);
\draw (G--9) .. controls +(0.9, -0.9) and +(-0.9, -0.9) .. (G--11);
\draw (G--1) -- (0.0,-2.7);
\draw (G--4) -- (4.5,-2.7);
\draw (G--14) -- (19.5,-2.7);
\end{tikzpicture}
\intertext{and thus $da = \delta\, b$, where $b$ is the $1$-factor}
b &= \;
\begin{tikzpicture}[scale = 0.35,thick, baseline={(0,-1ex/2)}] 
\tikzstyle{vertex} = [shape = circle, minimum size = 4pt, inner sep = 1pt] 
\node[vertex] (G-11) at (15.0, 0) [shape = circle, draw,fill=black] {}; 
\node[vertex] (G-10) at (13.5, 0) [shape = circle, draw,fill=black] {}; 
\node[vertex] (G-9) at (12.0, 0) [shape = circle, draw,fill=black] {}; 
\node[vertex] (G-8) at (10.5, 0) [shape = circle, draw,fill=black] {}; 
\node[vertex] (G-5) at (6.0, 0) [shape = circle, draw,fill=black] {}; 
\node[vertex] (G-1) at (0.0, 0) [shape = circle, draw,fill=black] {}; 
\node[vertex] (G-4) at (4.5, 0) [shape = circle, draw,fill=black] {}; 
\node[vertex] (G-2) at (1.5, 0) [shape = circle, draw,fill=black] {}; 
\node[vertex] (G-3) at (3.0, 0) [shape = circle, draw,fill=black] {}; 
\node[vertex] (G-6) at (7.5, 0) [shape = circle, draw,fill=black] {}; 
\node[vertex] (G-7) at (9.0, 0) [shape = circle, draw,fill=black] {}; 
\node[vertex] (G-12) at (16.5, 0) [shape = circle, draw,fill=black] {}; 
\node[vertex] (G-13) at (18.0, 0) [shape = circle, draw,fill=black] {}; 
\node[vertex] (G-14) at (19.5, 0) [shape = circle, draw,fill=black] {}; 
\draw (G-1) .. controls +(1, -1) and +(-1, -1) .. (G-4); 
\draw (G-2) .. controls +(0.5, -0.5) and +(-0.5, -0.5) .. (G-3); 
\draw (G-2) .. controls +(0.5, -0.5) and +(-0.5, -0.5) .. (G-3); 
\draw (G-6) .. controls +(0.5, -0.5) and +(-0.5, -0.5) .. (G-7);
\draw (G-8) .. controls +(0.9, -0.9) and +(-0.9, -0.9) .. (G-10);
\draw (G-13) .. controls +(0.5, -0.5) and +(-0.5, -0.5) .. (G-14);
\draw (G-5) -- (6.0,-1.7);
\draw (G-11) -- (15.0,-1.7);
\end{tikzpicture}\; .
\end{align*}
There is a factor of $\delta$ in this example because there is a
single loop in $\Gamma(d,a)$.

Let $W$ be the free $\Bbbk$-module on the Motzkin paths of
length $k$.  The above action extends linearly to make $W$ into an
$\M_k(\delta)$-module. Since
\begin{equation}
  \rank(da) \le \min(\rank(d),\rank(a)), 
\end{equation}
the $\Bbbk$-submodule $W^{\le \lambda}$ of $W$ spanned by the Motzkin
paths of rank at most $\lambda$ is an $\M_k(\delta)$-submodule, for
any $\lambda = 0, 1, \dots, k$. Thus we have a filtration $(0)
\subseteq W^{\le 0} \subseteq W^{\le 1} \subseteq \cdots \subseteq
W^{\le k} = W$ of $\M_k(\delta)$-submodules. For each $\lambda$, the
quotient module
\[
\M^\lambda := W^{\le \lambda} / W^{<\lambda}
\]
is an $\M_k(\delta)$-module, where we set $W^{<\lambda} := W^{\le
  \lambda-1}$ (and $W^{<0} = (0)$).

\begin{thm}[\cite{BH}*{Thms.~4.7, 4.16}]\label{t:M-cell-reps}
Let $\Bbbk$ be a commutative unital ring. The cell modules for
$\M_k(\delta)$ are given by
\[
\{ \M^\lambda \mid 0 \le \lambda \le k\}.
\]
If $\Bbbk$ is a field then $\M^\lambda$ is indecomposable, and if
$\delta \in \Bbbk$ is chosen so that $\M_k(\delta)$ is semisimple then
the above set is a complete set of pairwise nonisomorphic simple
$\M_k(\delta)$-modules.
\end{thm}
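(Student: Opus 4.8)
The result is \cite{BH}*{Thms.~4.7, 4.16}, and the plan is to recover it from the general theory of cellular algebras \cite{GL:96}. As recalled above, the Motzkin diagrams form a cellular basis of $\M_k(\delta)$: the poset is $\{0 < 1 < \cdots < k\}$, the index set attached to $\lambda$ is the set of $1$-factors on $k$ vertices of rank $\lambda$, the basis elements are the $C^\lambda_{\alpha,\beta}$, and the cell anti-involution $*$ is reflection of a diagram in a horizontal axis, an algebra anti-automorphism sending $C^\lambda_{\alpha,\beta}$ to $C^\lambda_{\beta,\alpha}$. Granting this, \cite{GL:96} produces for each $\lambda$ a cell module $\Delta(\lambda)$, and the first step is to identify $\Delta(\lambda)$ with the module $\M^\lambda = W^{\le\lambda}/W^{<\lambda}$ of the text.

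\emph{Main step: $\Delta(\lambda) \cong \M^\lambda$.} Write $\M_k^{<\lambda}$ for the $\Bbbk$-span of the $C^\mu_{\gamma,\eta}$ with $\mu < \lambda$. For a Motzkin $k$-diagram $d$ I would compute $d\, C^\lambda_{\alpha,\beta}$ by graphical stacking and observe that placing $d$ on top affects only the top half $\alpha$: the new top $1$-factor is the path product $d\alpha$ of \eqref{e:path-action}, each interior loop created between $d$ and $\alpha$ multiplies the result by $\delta$, and the bottom half remains $\beta$ except that, when $\rank(d\alpha) = \mu < \lambda$, exactly $\lambda - \mu$ through-strands get pinched off above and their lower endpoints are reabsorbed into a new bottom $1$-factor of rank $\mu$. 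Since $\rank(d\alpha) \le \lambda$ always, passing modulo $\M_k^{<\lambda}$ discards the terms with $\mu < \lambda$ and leaves $d\, C^\lambda_{\alpha,\beta} \equiv \sum_{\alpha'} r_d(\alpha,\alpha')\, C^\lambda_{\alpha',\beta}$, with the coefficients $r_d(\alpha,\alpha') \in \Bbbk$ equal to the structure constants of \eqref{e:path-action} and, in particular, independent of $\beta$. This identifies $\Delta(\lambda)$ with the free $\Bbbk$-module on the rank-$\lambda$ $1$-factors under the action \eqref{e:path-action}, i.e.\ with $\M^\lambda$; since this holds over any commutative unital ring, the cell modules of $\M_k(\delta)$ are $\{\M^\lambda \mid 0 \le \lambda \le k\}$, giving the first assertion.

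For the remaining assertions I would quote \cite{GL:96}: over a field each $\M^\lambda$ carries a symmetric bilinear form $\phi_\lambda$, the nonzero modules among $L^\lambda := \M^\lambda/\operatorname{rad}\phi_\lambda$ are a complete irredundant set of simple $\M_k(\delta)$-modules, $\M_k(\delta)$ is (split) semisimple exactly when every $\phi_\lambda$ is nondegenerate, and in that case $\M^\lambda = L^\lambda$ for all $\lambda$, so the $\M^\lambda$ are a complete set of pairwise non-isomorphic simple modules. For indecomposability over an arbitrary field it suffices to show $\phi_\lambda \ne 0$ for every $\lambda \le k$: taking $\alpha_\lambda$ to be the $1$-factor whose first $\lambda$ points are fixed points and whose remaining $k-\lambda$ points are isolated, the diagram $C^\lambda_{\alpha_\lambda,\alpha_\lambda}$ equals $1_\lambda \otimes \omega_{k-\lambda}$, an idempotent diagram with no interior loops, which forces $\phi_\lambda(\alpha_\lambda,\alpha_\lambda) = 1$. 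Then \cite{GL:96} gives $\operatorname{rad}\phi_\lambda = \operatorname{rad}(\M^\lambda)$ and $L^\lambda$ simple, so $\M^\lambda$ has a simple head and is therefore indecomposable.

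I expect the only real work to be the bookkeeping in the main step: keeping careful track of how through-strands are pinched off and how interior loops are created when $d$ is stacked onto the top half, and verifying both that no term of rank $\lambda$ slips into $\M_k^{<\lambda}$ and that the surviving coefficients do not depend on $\beta$. This is essentially the computation carried out in \cite{BH}*{\S4}, and the argument above just reorganizes it around the path action \eqref{e:path-action}.
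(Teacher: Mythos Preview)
The paper does not give its own proof of this theorem: it is stated as a citation of \cite{BH}*{Thms.~4.7, 4.16} and no argument is supplied. Your proposal therefore cannot be compared against a proof in the paper, but it is a correct and self-contained derivation via the Graham--Lehrer machinery \cite{GL:96}, and it is essentially the route taken in \cite{BH} itself. Your identification of $\Delta(\lambda)$ with $\M^\lambda$ is the right one, and your verification that $\phi_\lambda \ne 0$ using the idempotent diagram $1_\lambda \otimes \omega_{k-\lambda}$ (which squares to itself with coefficient $1$ in $\M_k(\delta,1)$, since each interior isolated vertex contributes a factor $\delta'=1$) is exactly what is needed to conclude indecomposability of each cell module over a field.
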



By restricting our attention to the Motzkin paths with no zeros, we
obtain copies of the cell modules for $\TL_k(\delta)$. The following
was observed in \cite{BH}*{Rmk.~4.9}.

\begin{thm}\label{t:TL-cell-reps}
Let $\mathcal{T}^\lambda$ be the $\Bbbk$-span of the Motzkin paths
$a$ in $\M^\lambda$ with no zeros.  The span of the Motzkin diagrams
in $\M_k(\delta)$ having no isolated vertices is a subalgebra
isomorphic to $\TL_k(\delta)$.  When restricted to that subalgebra,
the action of $\M_k(\delta)$ on $\M^\lambda$ makes
$\mathcal{T}^\lambda$ into a $\TL_k(\delta)$-module which is
isomorphic to the cell module $\TL^\lambda$, for any $\lambda$ such that
$k-\lambda$ is even.
\end{thm}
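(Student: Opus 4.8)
The plan is to handle the three assertions in turn: that the Motzkin diagrams with no isolated vertices span a subalgebra of $\M_k(\delta)$, that this subalgebra is $\TL_k(\delta)$, and that the restricted action on $\mathcal{T}^\lambda$ realises the cell module $\TL^\lambda$.

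First I would note that a Motzkin $k$-diagram with no isolated vertices is exactly a planar perfect matching on $[k]\cup[k]'$, that is, a Temperley--Lieb $k$-diagram. If $d_1,d_2$ are two such diagrams, every vertex in the middle row of $\Gamma(d_1,d_2)$ lies on a through-strand or on a closed loop, so no through-path collapses to a point; by \eqref{e:2mult} the product in $\M_k(\delta)=\M_k(\delta,1)$ is $\delta^{N_1(d_1,d_2)}\,d_3$ with $d_3$ again a Temperley--Lieb diagram, and no factor of $\delta'$ intervenes. Hence the span is closed under multiplication and the resulting rule is precisely that of $\TL_k(\delta)$, so the $\Bbbk$-linear bijection sending each basis diagram of $\TL_k(\delta)$ to the corresponding Motzkin diagram is an algebra isomorphism onto this span. (Alternatively, the $e_i$ lie in the span and satisfy \eqref{e:TL-rels}, giving a surjection from $\TL_k(\delta)$ which is injective since the Temperley--Lieb $k$-diagrams form part of the Motzkin diagram basis of $\M_k(\delta)$.)

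For the module statement, a Motzkin path of length $k$ with no zeros is precisely a Temperley--Lieb path, and its rank equals its number of fixed points; such paths exist only when $k-\lambda$ is even, which is why the claim is restricted to that case. The span $W_0$ of all Temperley--Lieb paths sits inside $W$, and $W_0\cap W^{\le\mu}=W_0^{\le\mu}$ for every $\mu$, so the quotient map $W^{\le\lambda}\to\M^\lambda$ carries $W_0^{\le\lambda}$ onto $\mathcal{T}^\lambda$ with kernel $W_0^{<\lambda}$; this gives a $\Bbbk$-module isomorphism $\TL^\lambda=W_0^{\le\lambda}/W_0^{<\lambda}\xrightarrow{\ \sim\ }\mathcal{T}^\lambda$. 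To see that $\mathcal{T}^\lambda$ is a submodule over the subalgebra of the first step and that this map is $\TL_k(\delta)$-linear, I would check that for a Temperley--Lieb $k$-diagram $d$ and a Temperley--Lieb path $a$ the action \eqref{e:path-action} produces $\delta^{N(d,a)}b$ with $b$ again a Temperley--Lieb path (no top vertex of $\Gamma(d,a)$ can be isolated, since $d$ has none) of rank at most that of $a$, and that this is the very same graphical stacking, with the same loop count, as the $\TL_k(\delta)$-action on $W_0$ recalled in \S\ref{s:TL-mod}. Passing to quotients then identifies the two module structures; and when the rank strictly drops the result lies in $W^{<\lambda}$, hence maps to $0$ in $\M^\lambda$, consistently on both sides.

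The steps are routine diagram bookkeeping; the only point to watch is that, as long as one stays within diagrams and paths having no isolated vertices, no isolated vertex or collapsed through-path --- and hence no spurious $\delta'$-factor --- ever appears, so that the Motzkin-algebra structures restrict exactly to the Temperley--Lieb ones. Granting that, the isomorphism $\mathcal{T}^\lambda\cong\TL^\lambda$ of $\TL_k(\delta)$-modules is immediate.
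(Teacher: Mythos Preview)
Your argument is correct and is exactly the natural one: identify Motzkin diagrams with no isolated vertices with Temperley--Lieb diagrams, check that their multiplication in $\M_k(\delta,1)$ never produces a $\delta'$-factor, and then observe that the action \eqref{e:path-action} restricted to zero-free paths reproduces the $\TL_k(\delta)$-action on $W_0$ from \S\ref{s:TL-mod}. The paper itself does not supply a proof---it simply records the statement as an observation attributed to \cite{BH}*{Rmk.~4.9}---so your write-up is more detailed than what the paper offers, but follows the same (and essentially only) route.
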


\section{Representations of $\PTL_k(\delta)$}\label{s:reps}\noindent
We continue to fix $k$ and $\delta \in \Bbbk$, where $\Bbbk$ is a
given unital commutative ring.  Recall from Theorem \ref{t:Xn-dec}
that $\PTL_k(\delta) = \bigoplus_{n=0}^k X(n)$. From now on, we will
index representations by partitions of not more than two parts,
instead of by integers as in the previous two sections; see
Remark~\ref{r:TL-indexing}.

The \emph{type} of a Motzkin path $a = (a_1,\dots, a_k)$ is the pair
$\lambda = (\lambda_1,\lambda_2)$ such that $\lambda_1$ (resp.,
$\lambda_2$) be the number of $i$ such that $a_i = 1$ (resp., $a_i =
-1$). Then $\lambda_1 \ge \lambda_2 \ge 0$ and $\rank(a)=\lambda_1 -
\lambda_2$.  We set
\[
|\lambda| := \lambda_1+\lambda_2.
\]
The $1$-factor corresponding to $a$ has $\lambda_2$ cups and $a$ has
$k-|\lambda|$ zeros. Write
\[
\Lambda :=\{(\lambda_1,\lambda_2) \mid \lambda_i \in \Z, \lambda_1\ge
\lambda_2 \ge 0, \lambda_1+\lambda_2 \le k\}
\]
for the set of types that occur as the type of some Motzkin path.  We
identify elements of $\Lambda$ with partitions of at most two parts.
Thus $\Lambda$ is the disjoint union of the $\Lambda_n$ (see Remark
\ref{r:TL-indexing}) as $n$ runs from $0$ to $k$.

For Motzkin paths $a=(a_1,\dots,a_k)$, $a'=(a'_1,\dots, a'_k)$ we
write $a' \le a$ if the following two conditions are satisfied:
\begin{enumerate}\renewcommand{\labelenumi}{(\roman{enumi})}
\item $a_i = 0$ implies that $a'_i = 0$.
\item $a'_i = 0$ if and only if $a'_j=0$ whenever $a_i,a_j$ are paired in $a$.
\end{enumerate}
In other words, $a' \le a$ if and only if the $1$-factor corresponding
to $a'$ is obtainable from the $1$-factor corresponding to $a$ by
erasing zero or more edges or lines to infinity. Let $\chi(a,a')$ be
the number of such changes.  Define $\bar{a}$ to be the linear
combination
\begin{equation}
  \bar{a} :=  \sum_{a' \le a} (-1)^{\chi(a,a')} a' \, .
\end{equation}
For example, if $a=(1,-1,1)$ then
\[
\bar{a} = (1,-1,1) - (0,0,1) - (1,-1,0) + (0,0,0).
\]
If $i$ is any index such that $a_i = \pm 1$ then $p_i a$ is the unique
Motzkin path obtained from $a$ by changing $a_i$ to $0$ (and also
changing $a_j$ to $0$ if $a_i$, $a_j$ are paired).

We define $\topp{a}$ to be the the set of all $i$ such that $a_i = \pm
1$. In the corresponding $1$-factor, this set indexes the vertices
that are endpoints of edges or lines to infinity. Notice that we have
\begin{equation}\label{e:bar-a}
\bar{a} = \textstyle \prod_{i\in \topp{a}} (1-p_i) \, a
\end{equation}
in the Motzkin algebra. Notice that $\{\bar{a} \mid a \text{ is a
  Motzkin path of length $k$}\}$ is a $\Bbbk$-basis for the free
$\Bbbk$-module $W$ on the set of Motzkin paths of length $k$. The next
result explains how $\PTL_k(\delta)$ acts on $W$.

\begin{thm}\label{t:PTL}
Let $\Bbbk$ be a commutative unital ring.  Suppose that $a$ is a
Motzkin path in $W$ of type $\lambda$ in $\Lambda$. Let $b$ be the
unique Motzkin path such that $d a = \delta^N\, b$, for some $N$,
where $d \in \D(k)$ is balanced. 
\[
\bar{d} \, \bar{a} =
\begin{cases}
  (\delta-1)^{N(d,a)}\, \bar{b} & \text{ if $\bott{d} = \topp{a}$}\\
  \phantom{\delta-1} 0   & \text{ otherwise.}
\end{cases}
\]
If $\bott{d} = \topp{a}$ then the type of $b$ is some $\mu$ in
$\Lambda$ such that $|\mu|=|\lambda|$, and $\rank(a)-\rank(b)$ is an
even number.
\end{thm}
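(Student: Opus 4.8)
The plan is to prove the last sentence by reducing both assertions to the single combinatorial identity $\topp{b} = \topp{d}$, where $\topp{b}$ denotes the set of non-isolated vertices of the $1$-factor $b$ and $\topp{d}$ is the top frame of $d$. First I would record the structure of $d$: as a Motzkin $k$-diagram it consists of $t$ through-strands, $c_{\mathrm{top}}$ cups, $c_{\mathrm{bot}}$ caps, and some isolated vertices, so that $|\topp{d}| = t + 2c_{\mathrm{top}}$ and $|\bott{d}| = t + 2c_{\mathrm{bot}}$. The balancedness of $d$ is exactly the condition $c_{\mathrm{top}} = c_{\mathrm{bot}}$, so $|\topp{d}| = |\bott{d}|$; combined with the hypothesis $\bott{d} = \topp{a}$ and the identity $|\topp{a}| = \lambda_1 + \lambda_2 = |\lambda|$ coming from the definition of the type of $a$, this gives $|\topp{d}| = |\lambda|$.

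The heart of the matter is the identity $\topp{b} = \topp{d}$. One inclusion is immediate: if $i \notin \topp{d}$ then $i$ is isolated in $d$, hence an isolated vertex of $\Gamma(d,a)$ that survives to the top row, so $i \notin \topp{b}$. For the reverse inclusion I would trace, for each $i \in \topp{d}$, the connected component of $i$ in $\Gamma(d,a)$. If the edge of $d$ at $i$ is a cup, then $i$ is joined in $b$ to the other cup endpoint. Otherwise the edge at $i$ is a through-strand descending to a middle-row vertex which, because $\bott{d} = \topp{a}$, is non-isolated in \emph{both} $d$ and $a$; from there the component propagates, alternating between an edge of $a$ (a cup or a line to infinity) and an edge of $d$ (a cap or a through-strand), and every middle-row vertex it visits again lies in $\bott{d} = \topp{a}$. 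Hence the component is a path that cannot close up into a loop (since $i$ has degree one) and that can terminate only at a vertex of $\topp{d}$ in the top row or at a line to infinity of $a$ --- never at an isolated vertex. Thus in $b$ the vertex $i$ is either joined to another top vertex or extended to infinity, i.e.\ $i \in \topp{b}$. This ``snaking'' bookkeeping is the main obstacle; it is analogous to the phenomenon producing the correction set $S$ in Theorem~\ref{t:bar-mult}(b), and the only real work is to organize the alternating case analysis cleanly.

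Finally, granting $\topp{b} = \topp{d}$, we obtain $|\mu| = |\topp{b}| = |\topp{d}| = |\lambda|$. Since $\mu$ is the type of a genuine Motzkin path of length $k$, it satisfies $\mu_1 \ge \mu_2 \ge 0$ (the second inequality because $\rank(b) = \mu_1 - \mu_2 \ge 0$) and $|\mu| \le k$, so $\mu \in \Lambda$. For the parity statement, note that any Motzkin path $x$ of type $(x_1,x_2)$ has $\rank(x) = x_1 - x_2 \equiv x_1 + x_2 \pmod 2$; applying this to $a$ and to $b$ and using $|\mu| = |\lambda|$ yields $\rank(a) \equiv |\lambda| = |\mu| \equiv \rank(b) \pmod 2$, so $\rank(a) - \rank(b)$ is even (and it is moreover nonnegative, by the inequality $\rank(da) \le \min(\rank(d),\rank(a))$ noted earlier).
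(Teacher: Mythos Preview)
Your proposal addresses only the last sentence of the theorem and says nothing about the displayed formula for $\bar{d}\,\bar{a}$. In the paper this is the first claim, dispatched in one line: by the identity $\bar{a} = \prod_{i\in\topp{a}}(1-p_i)\,a$ of equation~\eqref{e:bar-a}, the product $\bar{d}\,\bar{a}$ is computed by exactly the same idempotent manipulation as $\bar{d}_1\,\bar{d}_2$ in Theorem~\ref{t:bar-mult}(a), with the $1$-factor $a$ playing the role of $d_2$. You should at least record this reduction before turning to the combinatorics of types.

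For the last sentence itself, your argument is correct and in fact more explicit than the paper's. The paper asserts without justification that the isolated top vertices of $d$ become the zeros of $b$, and then invokes balancedness to get $|\mu|=|\lambda|$; your snaking argument supplies exactly the missing verification, proving the stronger set equality $\topp{b}=\topp{d}$. Your parity argument via $\rank(x)\equiv |x|\pmod 2$ and $|\mu|=|\lambda|$ is also a little different from, and arguably cleaner than, the paper's direct reasoning that rank can drop only by converting pairs of fixed points into cups.
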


\begin{proof}
Thanks to the identity \eqref{e:bar-a}, the action of $\bar{d}$ by
$\bar{a}$ is given by precisely the same rule as the multiplication
rule in Theorem~\ref{t:bar-mult}(a), giving the first claim. Suppose
that $\bott{d} = \topp{a}$. Then the zeros in $a$ appear at the same
places at the isolated vertices in the bottom row of $d$. Since $d$ is
balanced, it has the same number of isolated vertices in its top row,
so $|\mu|=|\lambda|$. The rank of $b$ cannot be larger but may be
strictly less than that of~$a$. (For instance, if $k=3$ and
$a=(1,1,1)$ then $e_1 a = b = (1,-1,1)$, so $\bar{e}_1 \bar{a} =
\bar{b}$ where $\rank(a)=3$, $\rank(b)=1$.)  The rank decreases only
if one or more pairs of unpaired indices of $1$ in $a$ are replaced by
pairings in $b$, so the rank can decrease only in steps of~$2$.
\end{proof}

Recall that the usual \emph{dominance order} on partitions is defined
by declaring that $\lambda \dom \mu$ if and only if $\lambda-\mu$ can
be written as a sum of positive roots (in the root system of $\gl_n$).
In our situation, if $|\lambda|=|\mu|$ and $\lambda,\mu \in \Lambda$,
this is equivalent to
\[
\lambda \dom \mu \iff \lambda-\mu = m(1,-1) \text{ for some integer }
m \ge 0.
\]
This is equivalent to $(\lambda_1-\lambda_2) - (\mu_1-\mu_2)$ being a
nonnegative even integer. Write $\lambda \sdom \mu$ whenever $\lambda
\dom \mu$ but $\lambda \ne \mu$.

For any $\lambda$ in $\Lambda$, let $\ov{W}^{\lessdom \lambda}$ and
$\ov{W}^{\slessdom \lambda}$ be the $\Bbbk$-span of the sets
\[
\{ \bar{a} \mid a \text{ has type } \mu \text{ and } \lambda \dom
\mu\} \quad \text{and}\quad \{ \bar{a} \mid a \text{ has type } \mu
\text{ and } \lambda \sdom \mu\}
\]
respectively. By Theorem~\ref{t:PTL}, both $\ov{W}^{\lessdom \lambda}$,
$\ov{W}^{\slessdom \lambda}$ are $\PTL_k(\delta)$-submodules of $W$. We
define
\[
\PTL^\lambda := \ov{W}^{\lessdom \lambda}/\ov{W}^{\slessdom \lambda}
\]
to be the corresponding quotient module. So the collection of $\bar{a}
+ \ov{W}^{\slessdom \lambda}$ such that $a$ has type $\lambda$ is a
basis of $\PTL^\lambda$.

Theorem \ref{t:PTL} above and Theorems \ref{t:Xn}, \ref{t:ss}
imply the following result.

\begin{thm}\label{t:PTL-cell}
Let $\Bbbk$ be a commutative unital ring.  The $\PTL^\lambda$ for
$\lambda$ in $\Lambda$ are the cell modules for $\PTL_k(\delta)$.  For
any $\lambda \in \Lambda$, with $n = |\lambda|$ we have an isomorphism
\[
\PTL^\lambda \cong Q_n \otimes \TL^\lambda
\]
as $X(n)$-modules, where $\TL^\lambda$ is the cell module
for $\TL_n(\delta-1)$ indexed by $\lambda$ in accordance with
Remark~\textup{\ref{r:TL-indexing}}.  Hence, if $\Bbbk$ is a field and
$\delta \in \Bbbk$ is chosen so that $\PTL_k(\delta)$ is
semisimple, then
\[
\{ \PTL^\lambda \mid \lambda \in \Lambda \}
\]
is a complete set of pairwise nonisomorphic simple
$\PTL_k(\delta)$-modules.
\end{thm}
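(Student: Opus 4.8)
The plan is to deduce the statement from Theorem~\ref{t:PTL} together with the structural results Theorems~\ref{t:Xn-dec}, \ref{t:Xn}, \ref{t:ss} and Remark~\ref{r:Morita}; the real content is to transport the $\PTL_k(\delta)$-module $W$ of Theorem~\ref{t:PTL} through the block decomposition $\PTL_k(\delta) = \bigoplus_n X(n)$. First I would observe that the action of Theorem~\ref{t:PTL} respects the splitting $W = \bigoplus_{n=0}^k \ov{W}_n$, where $\ov{W}_n$ is the $\Bbbk$-span of those $\bar a$ with $|\topp a| = n$: by Theorem~\ref{t:PTL} the product $\bar d\,\bar a$ vanishes unless $\bott d = \topp a$, which forces $|\bott d| = |\topp a|$, i.e. $d \in \D_n(k)$ and $\bar d \in X(n)$, and then the resulting $\bar b$ again has $|\topp b| = |\topp d| = n$. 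So $\ov{W}_n$ is an $X(n)$-module on which $X(n')$ acts as $0$ for $n' \ne n$ and the unit of $X(n)$ acts as the identity; and since $\lambda \dom \mu$ forces $|\lambda| = |\mu|$, the submodule $\ov{W}^{\lessdom\lambda}$ with $|\lambda| = n$ lies inside $\ov{W}_n$, so the filtration of Section~\ref{s:reps} restricted to $\ov W_n$ is a full filtration whose sections are exactly the $\PTL^\lambda$ with $|\lambda|=n$.

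Next I would identify $\ov{W}_n$ as an $X(n)$-module. A Motzkin path $a$ with $|\topp a| = n$ is the same data as a pair $(A, v)$, where $A = \topp a$ is an $n$-subset of $[k]$ and $v$ is the planar involution on $n$ points obtained by deleting the isolated vertices from the $1$-factor of $a$; this matches $Q_n$ (basis: $n$-subsets of $[k]$) tensored with the free $\Bbbk$-module $W_0(n)$ on planar involutions of length $n$, the latter being the module $W_0$ of Section~\ref{s:TL-mod} for the algebra $\TL_n(\delta-1)$. I would then check that $\bar a \mapsto A \otimes v$ is an isomorphism of $X(n)$-modules $\ov{W}_n \xrightarrow{\sim} Q_n \otimes W_0(n)$: under the identification $X(n) \cong \End_\Bbbk(Q_n) \otimes \TL_n(\delta-1)$ of Theorem~\ref{t:Xn} the basis element $\bar d(A',t,A)$ becomes the matrix unit $E_{A',A}$ tensored with $t$, and by Theorem~\ref{t:PTL} it sends $\bar a$ to $0$ unless $\topp a = A$, in which case it produces $(\delta-1)^{N}\bar b$ with $b \leftrightarrow (A', t v)$ and $N$ equal to the number of loops formed in stacking $t$ on $v$ in $\TL_n(\delta-1)$ (because $\bott d = \topp a$ makes the isolated vertices of $d$ match the zeros of $a$ exactly, so no discarded loop ever meets one of those vertices) — which is precisely the action of $E_{A',A}\otimes t$ on $A \otimes v$. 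This $X(n)$-equivariance verification, with its bookkeeping of loops, is the step I expect to be the main obstacle; everything else is formal.

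Passing to sections, under $\bar a \mapsto A \otimes v$ the submodule $\ov{W}^{\lessdom\lambda}$ (with $n=|\lambda|$) corresponds to $Q_n \otimes W_0(n)^{\lessdom\lambda}$, since the type of $a$ equals the type of its planar involution $v$ as a partition in $\Lambda_n$, and the dominance order between partitions of the fixed size $n$ used in Section~\ref{s:reps} agrees with the one used in Section~\ref{s:TL-mod} and Remark~\ref{r:TL-indexing} (both amount to the rank, i.e. the number of fixed points, dropping in even steps). Taking the quotient by $\ov{W}^{\slessdom\lambda}$ therefore gives $\PTL^\lambda \cong Q_n \otimes \TL^\lambda$ as $X(n)$-modules, where $\TL^\lambda = W_0(n)^{\lessdom\lambda}/W_0(n)^{\slessdom\lambda}$ is the cell module of $\TL_n(\delta-1)$ in the indexing of Remark~\ref{r:TL-indexing}; this is the displayed isomorphism. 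For the cell-module assertion, recall from Remark~\ref{r:Morita} that $\PTL_k(\delta)$ is the direct sum of the orthogonal ideals $X(n) \cong Q_n \otimes \TL_n(\delta-1) \otimes Q_n^*$ and is cellular, obtained by inflating the cellular algebra $\TL_n(\delta-1)$ along $Q_n$ and summing over $n$ with poset $\Lambda = \bigsqcup_n \Lambda_n$; by the standard behaviour of cell modules under such inflation \cites{KX:99,KX:01,Green-Paget} the cell modules of this algebra are exactly the $Q_n \otimes \TL^\mu$, i.e. the $\PTL^\lambda$ for $\lambda \in \Lambda$.

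Finally, suppose $\Bbbk$ is a field and $\PTL_k(\delta)$ is semisimple. Then each ideal $X(n)$ is semisimple, hence so is each $\TL_n(\delta-1)$ by Theorem~\ref{t:ss}(a), and by Theorem~\ref{t:ss}(b) the simple $X(n)$-modules are the $Q_n \otimes \TL^\mu$ with $\mu \in \Lambda_n$; since $\PTL_k(\delta) = \bigoplus_n X(n)$ is a direct sum of two-sided ideals, a simple module is simple over exactly one $X(n)$, so the simple $\PTL_k(\delta)$-modules are the union over $n=0,\dots,k$ of these, which by the previous paragraph is $\{\PTL^\lambda \mid \lambda \in \Lambda\}$. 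These are pairwise non-isomorphic: for $\lambda,\lambda'$ with $|\lambda|=|\lambda'|=n$ because the cell modules $\TL^\lambda, \TL^{\lambda'}$ of $\TL_n(\delta-1)$ are pairwise non-isomorphic and Morita equivalence preserves isomorphism classes, and for $|\lambda|=n \ne n'=|\lambda'|$ because $X(n)$ acts unitally on the nonzero module $\PTL^\lambda$ but as $0$ on $\PTL^{\lambda'}$. This completes the argument.
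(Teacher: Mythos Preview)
Your proposal is correct and follows essentially the same route as the paper's proof: both construct the map $\bar a \mapsto A \otimes v$ (where $A=\topp{a}$ and $v$ is the planar involution obtained by deleting the zeros of $a$), check it carries $\ov{W}^{\lessdom\lambda}$ to $Q_n\otimes W_0^{\lessdom\lambda}$, and pass to quotients. You are more explicit than the paper in two places---the verification that this map is $X(n)$-equivariant (matching the loop count $N$ in Theorem~\ref{t:PTL} with the loop count in $\TL_n(\delta-1)$), and the pairwise non-isomorphism argument in the semisimple case---but these are details the paper simply omits rather than a different strategy.
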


\begin{proof}
Suppose that $a$ is a Motzkin path of type $\lambda$, for $\lambda$ in
$\Lambda$. Let $A = \topp{a}$. If $n=|A|$ then $A$ is an element of
$Q_n$. Let $b$ be the Motzkin path obtained from $a$ by removing all
its zero entries; that is, $b$ is the part of $a$ supported by
$A$. Then $b$ may be regarded as a Temperley--Lieb half diagram (a
planar involution in the terminology of Graham and Lehrer; see
\S\ref{s:TL-mod}). Suppose that $|A|=n$. The linear map sending
\[
\bar{a} \mapsto A \otimes b
\]
defines an isomorphism $\ov{W}^{\lessdom \lambda} \to Q_n \otimes
W_0^{\lessdom \lambda}$ that restricts to an isomorphism
$\ov{W}^{\slessdom \lambda} \to Q_n \otimes W_0^{\slessdom
  \lambda}$. Passing to quotients induces the desired isomorphism
$\PTL^\lambda \cong Q_n \otimes \TL^\lambda$. This shows that the
$\PTL^\lambda$ are inflations of the various cell modules for
$\TL_n(\delta-1)$, which proves the first claim. The last claim
follows from the first two.
\end{proof}

As a consequence of the last result, if $\lambda$ is in $\Lambda$ and
satisfies $\lambda_1-\lambda_2 = n$, we have
\begin{equation}
  \rank_\Bbbk \PTL^\lambda = \binom{k}{n}\, \rank_\Bbbk \TL^\lambda.
\end{equation}

\begin{rmk}
The subalgebra of $\PTL_k(\delta)$ spanned by $\{\bar{d} \mid d \in
\D_k(k)\}$ is isomorphic to $\TL_k(\delta-1)$. For $\lambda\in
\Lambda$ with $|\lambda|=k$, the action of $\PTL_k(\delta)$ on
$\PTL^\lambda$, when restricted to that subalgebra, is isomorphic to
$\TL^\lambda$ as a $\TL_k(\delta-1)$-module. 
\end{rmk}

\section{$\UU_q(\gl_2)$ and $\UU_q(\fraksl_2)$}\label{s:quantum}%
\noindent
We assume henceforth that $\Bbbk$ is a field and that $0 \ne q \in
\Bbbk$ is not a root of unity. Let $\UU=\UU_q(\gl_2)$ be the quantized
enveloping algebra of the general linear Lie algebra $\gl_2$. By
definition, $\UU$ is the associative algebra with $1$ generated by
$E$, $F$, $K_i^{\pm1}$ (for $i=1,2$) subject to the defining relations
\begin{equation}
\begin{alignedat}{-1}
  & K_1K_2 = K_2K_1, && K_iK_i^{-1} = K_i^{-1}K_i = 1~(i=1,2)\\
  & K_1 E K_1^{-1} = q E, && K_2 E K_2^{-1} = q^{-1} E \\
  & K_1 F K_1^{-1} = q^{-1} F, && K_2 F K_2^{-1} = q F \\
  & EF - FE = \frac{K-K^{-1}}{q-q^{-1}}, && \text{where $K:= K_1K_2^{-1}$}. 
\end{alignedat}
\end{equation}
Furthermore, $\UU$ is a Hopf algebra with coproduct $\Delta: \UU \to
\UU \otimes \UU$ and counit $\epsilon: \UU \to \Bbbk$ given on
generators by
\begin{equation}\label{e:coprod}
\begin{alignedat}{-1}
  \Delta(E) &= E \otimes K + 1 \otimes E\\
  \Delta(F) &= F \otimes 1 + K^{-1} \otimes F\\
  \Delta(K_i) &= K_i \otimes K_i~(i=1,2)\\
    \epsilon(E) = \epsilon(F) &= 0, \quad \epsilon(K_i) = 1~(i=1,2).
\end{alignedat}
\end{equation}
The subalgebra of $\UU=\UU_q(\gl_2)$ generated by $E,F, K^{\pm1}$ is
the quantized enveloping algebra $\UU_q(\fraksl_2)$.

\begin{rmk}
The coproduct $\Delta$ defined in \eqref{e:coprod} is the one used in
\cite{BH}. It differs from the usual one in
\cites{Lusztig,Jantzen}. One could use either convention in this
paper, but for the sake of consistency, we stick with the choice made
in \cite{BH}.
\end{rmk}

We refer to \cite{Jantzen}*{Chap.~2} for basic facts about the
representation theory of $\UU_q(\fraksl_2)$. For each integer $n \ge
0$, by \cite{Jantzen}*{Thm.~2.6} there exist simple
$\UU_q(\fraksl_2)$-modules $L(n,+)$, $L(n,-)$ of dimension $n+1$. (In
characteristic $2$, $L(n,+) \cong L(n,-)$.) Any simple
$\UU_q(\fraksl_2)$-module of dimension $n+1$ is isomorphic to either
$L(n,+)$ or $L(n,-)$, and $L(n,+)$ is of type $\mathbf{1}$.

\begin{lem}
Suppose that $\Bbbk$ is a field and that $0 \ne q \in \Bbbk$ is not a
root of unity.  For any $\lambda = (\lambda_1,\lambda_2)$ in $\Z
\times \Z$ with $\lambda_1 - \lambda_2 \ge 0$, there is a unique
$\UU_q(\gl_2)$-module $V(\lambda)$ such that $V(\lambda) \cong
L(\lambda_1 - \lambda_2, +)$ as $\UU_q(\fraksl_2)$-modules, and
\[
K_i v_+ = q^{\lambda_i} \, v_+ \text{ for } i = 1,2
\]
where $v_+$ is a highest weight vector of weight $\lambda_1 -
\lambda_2$ for the $\UU_q(\fraksl_2)$-module structure.
\end{lem}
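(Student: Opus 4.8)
The plan is to construct $V(\lambda)$ explicitly and then show the construction is forced. Write $n := \lambda_1-\lambda_2 \ge 0$ and let $L := L(n,+)$, which by the cited structure of simple $\UU_q(\fraksl_2)$-modules has a basis $v_+, Fv_+, \dots, F^n v_+$ and decomposes into one-dimensional $K$-weight spaces $L_m$ (for $m = n, n-2, \dots, -n$), with $K$ acting on $L_m$ by $q^m$, $E\colon L_m \to L_{m+2}$ and $F\colon L_m \to L_{m-2}$. First I would record the parity observation that $m \equiv n \pmod 2$ forces $m+\lambda_1+\lambda_2 \equiv 2\lambda_1 \equiv 0 \pmod 2$, so that $\tfrac12(m+\lambda_1+\lambda_2)$ and $\tfrac12(-m+\lambda_1+\lambda_2)$ are integers and $q$ raised to these powers is a well-defined nonzero element of $\Bbbk$. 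Then I would define operators on $L$ by letting $K_1$, $K_2$ act on $L_m$ by the scalars $q^{(m+\lambda_1+\lambda_2)/2}$ and $q^{(-m+\lambda_1+\lambda_2)/2}$ respectively, while keeping the given action of $E$, $F$.

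Next I would verify that these operators, together with $E$ and $F$, satisfy the defining relations of $\UU=\UU_q(\gl_2)$. The relations not involving $K_1$, $K_2$ individually (in particular $EF-FE = (K-K^{-1})/(q-q^{-1})$) hold because they are internal to the given $\UU_q(\fraksl_2)$-structure; on $L_m$ the operator $K_1K_2^{-1}$ acts by $q^m$, which agrees with the $K$ occurring in those relations; the $K_i$ commute since they are simultaneously diagonal; and because $E$ and $F$ shift the $K$-weight by $\pm 2$, a one-line computation on each $L_m$ gives $K_1 E = qEK_1$, $K_2 E = q^{-1}EK_2$, $K_1 F = q^{-1}FK_1$, $K_2 F = qFK_2$. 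Evaluating on a highest weight vector $v_+ \in L_n$ yields $K_1 v_+ = q^{(n+\lambda_1+\lambda_2)/2}v_+ = q^{\lambda_1}v_+$ and likewise $K_2 v_+ = q^{\lambda_2}v_+$, so this module meets both requirements; I would call it $V(\lambda)$.

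For uniqueness, suppose $V$ is any $\UU$-module satisfying the two conditions. Identifying its $\UU_q(\fraksl_2)$-structure with $L$ (and $v_+$ with a highest weight vector of $L$), I would argue that $K_1$ commutes with $K=K_1K_2^{-1}$, hence preserves each one-dimensional weight space $L_m$ and acts there by a scalar $c_m$; the hypothesis forces $c_n = q^{\lambda_1}$, and the relation $K_1 F = q^{-1}FK_1$ applied repeatedly to $v_+$ (using that $F^jv_+$ is a nonzero vector spanning $L_{n-2j}$ for $0 \le j \le n$) propagates this to $c_{n-2j} = q^{\lambda_1-j} = q^{(n-2j+\lambda_1+\lambda_2)/2}$; thus $K_1$, and then $K_2 = K^{-1}K_1$, are uniquely determined, so $V \cong V(\lambda)$ as $\UU$-modules. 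I do not anticipate a real obstacle: the only delicate point is the integrality/parity check that makes $q^{(\pm m+\lambda_1+\lambda_2)/2}$ legitimate, and the hypothesis that $q$ is not a root of unity enters only indirectly, through the cited description of $L(n,+)$ (one-dimensional $K$-weight spaces, the $F$-basis) on which both the existence relations and the uniqueness propagation rely. It may also be worth remarking for orientation that $K_1K_2$ is central in $\UU$ and acts on $V(\lambda)$ by the scalar $q^{\lambda_1+\lambda_2}$.
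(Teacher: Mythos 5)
Your proof is correct. The paper does not give an argument: it cites Jantzen's book (Chapter~2) and Klimyk--Schm\"udgen (\S7.3), leaving the construction to the reader, so you are supplying the details the paper elides. Your route is sound and complete: the parity observation that $m\equiv n\pmod 2$ makes $q^{(\pm m+\lambda_1+\lambda_2)/2}$ a legitimate scalar; the diagonal definition of $K_1,K_2$ on each $K$-weight space $L_m$ visibly gives $K_1K_2^{-1}=K$, the $K_i$ commute, and the degree-shift relations $K_1EK_1^{-1}=qE$, etc., drop out of a one-line computation on each $L_m$; the Serre-type relation $EF-FE=(K-K^{-1})/(q-q^{-1})$ is inherited from the given $\UU_q(\fraksl_2)$-structure. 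Evaluation on $v_+\in L_n$ uses $n+\lambda_1+\lambda_2=2\lambda_1$ and $-n+\lambda_1+\lambda_2=2\lambda_2$, giving the required weights. For uniqueness, $K_1$ commutes with $K$ and so preserves each $L_m$; the relation $K_1F=q^{-1}FK_1$ applied along the $F$-string $v_+,Fv_+,\dots,F^nv_+$ (which exhausts $L$ because $q$ is not a root of unity) forces $c_{n-2j}=q^{\lambda_1-j}$, and then $K_2=K^{-1}K_1$ is determined; hence any two such modules are isomorphic as $\UU_q(\gl_2)$-modules via the $\UU_q(\fraksl_2)$-isomorphism. Your closing remark that $K_1K_2$ is central and acts by $q^{\lambda_1+\lambda_2}$ is the expected central-character statement and fits the $\gl_2$-picture. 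In short: correct, and genuinely more explicit than what the paper records.
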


\begin{proof}
This follows from the results in \cite{Jantzen}*{Chap.~2}, using the
fact that any $\UU_q(\gl_2)$-module is also (by restriction) a
$\UU_q(\fraksl_2)$-module. (See also \cite{KS}*{\S7.3}.) 
\end{proof}

Under the same hypotheses, every finite dimensional
$\UU_q(\gl_2)$-module $M$ is semisimple. Furthermore, if $M$ is of
type $\mathbf{1}$, it is isomorphic to a direct sum of modules of the
form $V(\lambda)$ as in the above lemma.

The ``polynomial'' $\UU_q(\gl_2)$-modules are direct sums of
$V(\lambda)$ such that $\lambda=(\lambda_1,\lambda_2)$ belongs to
$\Z_{\ge 0} \times \Z_{\ge 0}$ and $\lambda_1 \ge \lambda_2$. If $M$
is a simple polynomial $\UU_q(\gl_2)$-module, we may (and do) identify
its highest weight $\lambda$ with a partition of at most two parts. In
particular, if $\lambda = (n)$ is a partition of one part, then we
write $V(n)$ for $V(\lambda)$. Thus $V(n) \cong L(n,+)$ as
$\UU_q(\fraksl_2)$-modules.

Consider the simple $\UU_q(\gl_2)$-modules $V(0)$, $V(1)$. Then $V(0)
\cong \Bbbk$ is the trivial module, with $\UU_q(\gl_2)$ acting via the
counit $\epsilon$. This means that if $v_0$ is a chosen basis of
$V(0)$; then on $v_0$ the operators $E$, $F$ act as zero and each
$K_i$ acts as $1$. Fix a choice of $v_0$. Fix also a basis $\{v_1,
v_{-1}\}$ of weight vectors of $V(1)$, where $v_1$ has weight $(1,0)$
and $v_{-1}$ has weight $(0,1)$ as $\UU_q(\gl_2)$-modules, such that
the action of $E$, $F$ is given by
\begin{equation}
  Ev_1 = 0,\quad Ev_{-1} = v_1,\quad  Fv_1 = v_{-1},\quad Fv_{-1} = 0.
\end{equation}
In other words, with respect to the basis $\{v_1, v_{-1}\}$, the
matrices representing the action of $E$, $F$, $K_i$ are
\begin{equation*}
  E \to 
  \begin{bmatrix}
    0&1\\0&0
  \end{bmatrix},\quad
  F \to 
  \begin{bmatrix}
    0&0\\1&0
  \end{bmatrix},\quad
  K_1 \to 
  \begin{bmatrix}
    q&0\\0&1
  \end{bmatrix},\quad
  K_2 \to 
  \begin{bmatrix}
    1&0\\0&q
  \end{bmatrix}.
\end{equation*}
Following \cite{BH}, we set $V := V(0) \oplus V(1)$, with basis
$\{v_1, v_0, v_{-1}\}$.

We will need the following general fact about tensor powers of
bialgebra representations, for which we were unable to find a suitable
reference.  If $\UU$ is a bialgebra and $V$ a $\UU$-module, then
$V^{\otimes k}$ is a $\UU$-module for any $k>1$, with $u \in \UU$
acting on $V^{\otimes k}$ by
\[
\Delta^{(k)}: \UU \to \UU^{\otimes k},
\]
where $\Delta$ is the coproduct on $\UU$, lifted to $\Delta^{(k)}$
inductively by (for example) defining $\Delta^{(2)} = \Delta$ and
$\Delta^{(k+1)} = (\Delta \otimes 1^{\otimes(k-1)}) \Delta^{(k)}$ for
$k \ge 2$.

\begin{lem}\label{l:comult}
Let $V$ be a $\UU$-module, where $\UU$ is a bialgebra with
coproduct~$\Delta$. Suppose that $\psi$ is in $\End_\UU(V\otimes V)$;
that is, $\psi \Delta(u) = \Delta(u)\psi$, for any $u \in \UU$. Then
$1^{\otimes(i-1)} \otimes \psi \otimes 1^{k-1-i}$ commutes with the
action of $\Delta^{(k)}(u)$, for any $u \in \UU$.
\end{lem}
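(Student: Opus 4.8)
The plan is to prove the lemma by induction on $k$. The base case is $k=2$: there the only admissible operator is $1^{\otimes 0}\otimes\psi\otimes1^{\otimes 0}=\psi$, and $\Delta^{(2)}=\Delta$, so the assertion is exactly the hypothesis $\psi\,\Delta(u)=\Delta(u)\,\psi$. For the inductive step ($k\ge 3$) I would invoke the two standard reassociation identities for iterated coproducts,
\[
\Delta^{(k)} = (\Delta^{(k-1)}\otimes 1)\circ\Delta = (1\otimes\Delta^{(k-1)})\circ\Delta,
\]
each of which follows from coassociativity $(\Delta\otimes 1)\Delta=(1\otimes\Delta)\Delta$ by a routine induction; I would either cite these or dispose of them in a line. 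Having both available is the crux: whichever of the first or last tensor slot the operator $1^{\otimes(i-1)}\otimes\psi\otimes1^{\otimes(k-1-i)}$ leaves untouched, I can peel that slot off and reduce $k$ by one.

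Concretely, set $T:=1^{\otimes(i-1)}\otimes\psi\otimes1^{\otimes(k-1-i)}$, fix $u\in\UU$, write $\Delta(u)=\sum u_{(1)}\otimes u_{(2)}$ in Sweedler notation, and let $\rho\colon\UU\to\End_\Bbbk(V)$ be the given action. If $2\le i\le k-1$, then from $\Delta^{(k)}(u)=\sum u_{(1)}\otimes\Delta^{(k-1)}(u_{(2)})$ the operator $\Delta^{(k)}(u)$ on $V^{\otimes k}=V\otimes V^{\otimes(k-1)}$ is the sum of tensor-product operators $\rho(u_{(1)})\otimes D(u_{(2)})$, where $D(w)$ denotes the operator of $\Delta^{(k-1)}(w)$ on $V^{\otimes(k-1)}$. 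In this block decomposition $T=1_V\otimes T_1$ with $T_1=1^{\otimes(i-2)}\otimes\psi\otimes1^{\otimes(k-1-i)}$ acting on $V^{\otimes(k-1)}$ — an operator of exactly the form covered by the inductive hypothesis at $k-1$ — so $T_1$ commutes with every $D(u_{(2)})$, while $1_V$ commutes with every $\rho(u_{(1)})$; hence $T$ commutes with each summand and therefore with $\Delta^{(k)}(u)$. The remaining case $i=1$ is handled symmetrically using instead $\Delta^{(k)}(u)=\sum \Delta^{(k-1)}(u_{(1)})\otimes u_{(2)}$ and peeling off the last tensor factor: there $T=T_0\otimes 1_V$ with $T_0=\psi\otimes1^{\otimes(k-3)}$ on $V^{\otimes(k-1)}$ (which is why the inductive step needs $k\ge 3$, dovetailing with the base case $k=2$). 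As $u\in\UU$ was arbitrary, this gives $T\in\End_\UU(V^{\otimes k})$.

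The only real friction I anticipate is the bookkeeping: keeping the identification $V^{\otimes k}\cong V^{\otimes a}\otimes V^{\otimes b}$ straight, and verifying that after peeling off a boundary tensor factor the residual operator $1^{\otimes(\bullet)}\otimes\psi\otimes1^{\otimes(\bullet)}$ on $V^{\otimes(k-1)}$ is genuinely of the shape to which the inductive hypothesis applies — in particular that the exponents remain nonnegative, which is precisely what forces the split into $i=1$ versus $2\le i\le k-1$ and fixes the base at $k=2$. The conceptual content is otherwise immediate: $\psi$ commutes with the coproduct in the two adjacent slots it occupies, tensor products of operators compose slotwise, and sums of commuting operators commute. (An equivalent route would be to use the more general reassociation $\Delta^{(k)}=(1^{\otimes(i-1)}\otimes\Delta\otimes1^{\otimes(k-1-i)})\circ\Delta^{(k-1)}$, placing the $\Delta$ directly in the two slots $\psi$ acts on, which avoids the case split but requires the slightly stronger coassociativity statement.)
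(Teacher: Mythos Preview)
Your proof is correct. Amusingly, the paper takes precisely the ``equivalent route'' you sketch in your closing parenthetical: it proves by induction from coassociativity that
\[
\Delta^{(k)} = (1^{\otimes a}\otimes\Delta\otimes 1^{\otimes b})\,\Delta^{(k-1)}
\]
for any $a+b=k-2$, then sets $a=i-1$, $b=k-1-i$ so that the inserted $\Delta$ sits in exactly the two slots occupied by $\psi$; commutation of $T$ with $\Delta^{(k)}(u)$ is then immediate from the hypothesis $\psi\Delta(w)=\Delta(w)\psi$ applied termwise to $\Delta^{(k-1)}(u)$, with no induction on the commutation statement itself and no case split. Your main argument---inducting on $k$ by peeling off a boundary tensor factor and splitting into $i=1$ versus $i\ge 2$---is equally valid and arguably more self-contained, at the cost of a little extra bookkeeping; the paper's version trades the case analysis for the slightly stronger reassociation identity you flagged.
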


\begin{proof}
This is of course well known; we sketch a proof for completeness. It
follows by induction from the coassociativity axiom
\[
(\Delta \otimes 1)\Delta = (1 \otimes \Delta) \Delta
\]
that
\[
\Delta^{(k)} = (1^{\otimes a} \otimes \Delta \otimes 1^{\otimes b})
\Delta^{(k-1)}
\]
for any $a,b$ such that $a+b = k-2$. Taking $a = i-1$, $b = k-1-i$
proves the result.
\end{proof}

\section{Structure of $V \otimes V$ for $V = V(0)\oplus V(1)$}%
\label{s:VV}\noindent
We continue to assume in this section that $\Bbbk$ is a field and $0
\ne q \in \Bbbk$ is not a root of unity.  We wish to analyze the
structure of $V \otimes V$, as module for both $\UU_q(\gl_2)$ and
$\UU_q(\fraksl_2)$. We have
\begin{align*}
V \otimes V = (V(0) \otimes V(0)) \;&\oplus\; (V(0) \otimes
V(1))\\  &\oplus\; (V(1) \otimes V(0)) \;\oplus\;
(V(1) \otimes V(1))
\end{align*}
and since the first three direct summands on the right hand side are
respectively isomorphic to $V(0)$, $V(1)$, and $V(1)$, understanding
the structure of $V \otimes V$ reduces to understanding the structure
of $V(1) \otimes V(1)$.

To simplify notation, set $v_{i,j} := v_i \otimes v_j$. Then $\{
v_{i,j} \mid i,j \in \{1,0,-1\}\}$ is a basis of $V \otimes V$, and
$\{ v_{i,j} \mid i,j \in \{1,-1\}\}$ is a basis of $V(1) \otimes
V(1)$. A simple direct computation shows that $\{v_{1,1},
q^{-1}v_{1,-1} + v_{-1,1}, v_{-1,-1}\}$ is a basis of weight vectors
for a submodule of $V(1) \otimes V(1)$ isomorphic to $V(2)$.  In order
to pick a complement to this submodule, we observe that the vector
\begin{equation}\label{e:Z_0}
  Z_0 := -q v_{1,-1} + v_{-1,1}
\end{equation}
is orthogonal to the submodule, with respect to the standard bilinear
form on $V(1)$ extended to $V(1)\otimes V(1)$, and this property
uniquely determines $Z_0$ up to a scalar multiple. The line $\Bbbk
Z_0$ in $V(1)\otimes V(1)$ is isomorphic to the trivial module $V(0)$
as $\UU_q(\fraksl_2)$-modules. Since $K_i$ for $i=1,2$ both act as $q$
on $Z_0$, it follows that $\Bbbk Z_0 \cong V(1,1)$ as
$\UU_q(\gl_2)$-modules.  Hence
\begin{equation}\label{e:V1V1}
  V(1) \otimes V(1) \cong 
  \begin{cases}
    V(2) \oplus V(1,1) & \text{as $\UU_q(\gl_2)$-modules}\\
    V(2) \oplus V(0) & \text{as $\UU_q(\fraksl_2)$-modules}.
  \end{cases}
\end{equation}
It follows from \eqref{e:V1V1} that
\begin{equation}\label{e:VV-dec}
  V \otimes V \cong
  \begin{cases}
    V(0) \oplus 2V(1) \oplus V(2) \oplus V(1,1) &
    \text{as $\UU_q(\gl_2)$-modules} \\
    2V(0) \oplus 2V(1) \oplus V(2) & \text{as $\UU_q(\fraksl_2)$-modules}.
  \end{cases}
\end{equation}
From this it is immediate that
\begin{equation}
\begin{aligned}
  \dim \End_{\UU_q(\gl_2)}(V \otimes V) &= 1^2 + 2^2 + 1^2 + 1^2 = 7  \\
  \dim \End_{\UU_q(\fraksl_2)}(V \otimes V) &= 2^2 + 2^2 + 1^2  = 9. 
\end{aligned}
\end{equation}
This dichotomy illustrates the error in \cite{BH}, where it is
implicitly assumed that the centralizers are the same in both cases.

The first decomposition given in equation \eqref{e:V1V1} is a special
case of the decomposition:
\begin{equation}\label{e:Pieri}
  V(1) \otimes V(\lambda) \cong \textstyle \bigoplus_\mu V(\mu)
\end{equation}
where $\lambda$, $\mu$ are partitions of at most two parts and $\mu$
varies over the set of such partitions obtainable from $\lambda$ by
adding one box. The rule \eqref{e:Pieri} is itself a special case of
the standard Pieri rule.  By repeated application of \eqref{e:Pieri},
we now construct the Bratteli diagram (see Figure~\ref{Bratteli}) for
the centralizer algebra
$\mathcal{Z}_k(q):= \End_{\UU_q(\gl_2)}(V^{\otimes k})$.

\begin{figure}[h!]
\begin{center}
\begin{tikzpicture}[xscale=2.6*\UNIT, yscale=-5*\UNIT]
  \coordinate (0) at (0,0);
  \foreach \x in {0,...,1}{\coordinate (1\x) at (2*\x,2);}
  \foreach \x in {0,...,3}{\coordinate (2\x) at (2*\x,4);}
  \foreach \x in {0,...,6}{\coordinate (3\x) at (2*\x,6);}
  \foreach \x in {0,...,11}{\coordinate (4\x) at (2*\x,8);}
  \draw (0)--(10) (0)--(11);

  \draw (10)--(20) (10)--(21) (11)--(21) (11)--(22) (11)--(23);

  \draw (20)--(30) (20)--(31) (21)--(31) (21)--(32) (21)--(33);
  \draw (22)--(32) (22)--(34) (22)--(35);
  \draw (23)--(33) (23)--(35) ;

  \draw (30)--(40) (30)--(41) (31)--(41) (31)--(42) (31)--(43);
  \draw (32)--(42) (32)--(44) (32)--(45);
  \draw (33)--(43) (33)--(45);

  \draw (34)--(44) (34)--(46) (34)--(47);
  \draw (35)--(45) (35)--(47) (35)--(48) ;
\def\NULL{\footnotesize$\emptyset$}
\begin{scope}[every node/.style={fill=white}]
  \node at (0) {\NULL};

  \node at (10) {\NULL};
  \node at (11) {\PART{1}};

  \node at (20) {\NULL};
  \node at (21) {\PART{1}};
  \node at (22) {\PART{2}};
  \node at (23) {\PART{1,1}};

  \node at (30) {\NULL};
  \node at (31) {\PART{1}};
  \node at (32) {\PART{2}};
  \node at (33) {\PART{1,1}};
  \node at (34) {\PART{3}};
  \node at (35) {\PART{2,1}};

  \node at (40) {\NULL};
  \node at (41) {\PART{1}};
  \node at (42) {\PART{2}};
  \node at (43) {\PART{1,1}};
  \node at (44) {\PART{3}};
  \node at (45) {\PART{2,1}};
  \node at (46) {\PART{4}};
  \node at (47) {\PART{3,1}};
  \node at (48) {\PART{2,2}};
\end{scope}
\end{tikzpicture}
\end{center}
\caption{Bratteli diagram for $\End_{\UU_q(\gl_2)}(V^{\otimes k})$,
$k \le 4$}\label{Bratteli}
\end{figure}

Since $q$ is not a root of unity, the dimension of the simple
$\mathcal{Z}_k(q)$-module indexed by a partition $\lambda$ of at most
two parts is the number of paths from the top vertex to its label in
the Bratteli diagram. The sum of the squares of those dimensions is
the dimension of $\mathcal{Z}_k(q)$, as tabulated below:
\[\setlength{\arraycolsep}{3pt}
\begin{array}{c|ccccccccc|c}
  \;k\; &\;\emptyset\;
  &(1)&(2)&(1^2)&(3)&(2,1)&(4)&(3,1)&(2,2)\;
  & \;\dim \mathcal{Z}_k(q) \\ \hline
  0 & 1 &&&&&&&& &1 \\
  1 & 1&1&&&&&&& &2 \\
  2 & 1&2&1&1&&&&&& 7\\
  3 & 1&3&3&3&1&2&&& &33\\
  4 & 1&4&6&6&4&8&1&3&2&  183
\end{array}
\]
and they differ from the dimensions of
$\End_{\UU_q(\fraksl_2)}(V^{\otimes k})$ given in \cite{BH}*{Fig.~1}.
Notice that the dimension of $\mathcal{Z}_k(q)$ agrees with the
dimension in \eqref{e:PTL-dim} of the algebra $\PTL_k(\delta)$, at
least up to degree $4$.  We will prove in Theorem~\ref{t:SWD} that
they agree in general, when $\delta = 1\pm(q+q^{-1})$.

\section{Schur--Weyl duality for the Motzkin algebras}%
\label{s:SWD-Motzkin}\noindent
We continue to assume that $\Bbbk$ is a field, and $0 \ne q \in \Bbbk$
is not a root of unity. We endow $V = V(0) \oplus V(1)$ with the
standard nondegenerate bilinear form such that $\bil{v_i}{v_j} =
\delta_{i,j}$ for all $i,j = 1,0,-1$ and extend the form to $V\otimes
V$ in the natural way.  Let $\pi$ be the orthogonal projection of
$V(1) \otimes V(1)$ onto the line $\Bbbk Z_0 \cong V(0)$, where $Z_0 =
-q v_{1,-1} + v_{-1,1}$ is the invariant in equation
\eqref{e:Z_0}. With respect to the ordered basis $v_{1,1}, v_{1,-1},
v_{-1,1}, v_{-1,-1}$ the matrix of $\pi$ is
\[
A(\pi) = \frac{1}{q+q^{-1}} \,
\begin{bmatrix}
  0 \\ & q & -1 \\ & -1 & q^{-1} \\ &&&0
\end{bmatrix}
\]
in which omitted entries should be interpreted as zero, as
usual. Recall from \S\ref{ss:TL} that the action of the generator
$e_i$ in $\TL_k(\pm(q+q^{-1}))$ on $V(1)^{\otimes k}$ is given by the
operator $\pm(q+q^{-1})\, 1^{\otimes(i-1)} \otimes \pi \otimes
1^{\otimes(k-i-1)}$. This defines a faithful action of
$\TL_k(\pm(q+q^{-1}))$, and it is well known that
\begin{equation}\label{e:SWD-TL}
\TL_k(\pm(q+q^{-1})) \cong \End_{\UU_q(\fraksl_2)}(V(1)^{\otimes k})
\cong \End_{\UU_q(\gl_2)}(V(1)^{\otimes k}).
\end{equation}
In order to extend the above action to a faithful action of
$\M_k(1\pm(q+q^{-1}))$ on $V^{\otimes k}$, we first consider how to do
this for the case $k=2$.

The algebra $\M_2(1\pm(q+q^{-1}))$ is generated by the elements 
$r$, $l$, and $e$ defined in \S\ref{s:diagrams}. It has
basis consisting of the nine Motzkin diagrams:
\[
\shade{\oneonepic}\;, \quad \shade{\rpic}\;, \quad \shade{\lpic}\;,
\quad \shade{\oneppic}\;, \quad \shade{\ponepic}\;,
\quad \shade{\epic}\;, \quad \shade{\cappic}\;,
\quad \shade{\cuppic}\;,  \quad \shade{\sink}\;.
\]
In order from left to right, the above elements are expressible in
terms of the generators $r$, $l$, $e$ as follows:
\[
1,\; r,\; l,\; lr,\; rl,\;  e,\; re = le,\; er = el,\;
r^2 = l^2 = rer = lel. 
\]
Since (in the partition algebra) the elements $r$, $l$ satisfy the
identities
\begin{equation}
  r = p_1s = sp_2, \qquad l = sp_1 = p_2s
\end{equation}
where $s$ is the swap operator defined in \S\ref{s:diagrams} and
$p_1$, $p_2$ are projections onto $V(0) \otimes V$, $V \otimes V(0)$
respectively, it is natural to define the action of $r$, $l$ on basis
elements by
\begin{equation}\label{e:rl}
  r v_{i,j} = \delta_{j,0} v_{0,i}, \quad
  l v_{i,j} = \delta_{i,0} v_{j,0}
\end{equation}
for all $i,j \in \{1,0,-1\}$, as in \cite{BH}*{\S3.4}.

The lines $\Bbbk v_{0,0}$, $\Bbbk Z_0$ are isomorphic copies of the
trivial $\UU_q(\fraksl_2)$-module in $V \otimes V$, so it is natural
to let $e$ act as a projection onto some linear combination of the
form
\begin{equation}
  Y_0 := v_{0,0} + \alpha Z_0 = v_{0,0} + \alpha(-q v_{1,-1} +
  v_{-1,1}) \qquad (\alpha \ne 0).
\end{equation}
We demand that on restriction to $V(1) \otimes V(1)$ (resp., $V(0)
\otimes V(0)$) the action restricts to the Temperley--Lieb action
defined above (resp., the trivial action). This forces $\alpha \ne 0$
and implies that each $v_{i,j}$ is sent to a multiple $\beta_{i,j}
Y_0$ of $Y_0$. The multipliers $\beta_{i,j}$ are forced by our demands
to be
\[
\beta_{1,-1} = \mp\alpha^{-1},\quad \beta_{0,0} = 1,\quad \beta_{-1,1} =
\pm \alpha^{-1} q^{-1} 
\]
with all other $\beta_{i,j} = 0$. In other words, the matrix of the
action of $e$ on the $0$-weight space $(V\otimes V)_0$ with respect to
the ordered basis $v_{1,-1}$, $v_{0,0}, v_{-1,1}$ is given by
\begin{equation}\label{e:B-matrix}
B(\alpha, \pm) :=
\begin{bmatrix}
  \pm q & -\alpha q & \mp 1 \\
  \mp \alpha^{-1} & 1 & \pm \alpha^{-1} q^{-1} \\
  \mp 1 & \alpha & \pm q^{-1}
\end{bmatrix} .
\end{equation}
The matrix $B(\alpha, \pm)$ satisfies the relation
\begin{equation}
B(\alpha,\pm)^2 = (1 \pm (q+q^{-1})) B(\alpha,\pm)
\end{equation}
so it is a scalar multiple of a projection in the above sense.

We now define two nondegenerate bilinear forms $\bil{-}{-}_t$,
$\bil{-}{-}_b$ on $V$ by the rules:
\begin{equation}\label{e:forms}
\begin{aligned}
  \bil{v_1}{v_{-1}}_t &= -\alpha q, \quad &
  \bil{v_0}{v_{-0}}_t &= 1, \quad &
  \bil{v_{-1}}{v_1}_t &= \alpha \\
  \bil{v_1}{v_{-1}}_b &= \mp \alpha^{-1}, \quad &
  \bil{v_0}{v_{-0}}_b &= 1, \quad &
  \bil{v_{-1}}{v_1}_b &= \pm \alpha^{-1} q^{-1}
\end{aligned}
\end{equation}
with all other $\bil{v_i}{v_j}_t = 0$, $\bil{v_i}{v_j}_b = 0$. It is
worth noticing that the nonzero values of $\bil{v_i}{v_j}_t$ (resp.,
$\bil{v_i}{v_j}_b$) are encoded in the middle column (resp., middle
row) of the matrix $B(\alpha, \pm)$.

Just as in \cite{BH}*{\S3.4}, the forms may be applied to give
explicit formulas for the action of $\M_k(1\pm(q+q^{-1}))$ on
$V^{\otimes k}$, as follows.  Given a Motzkin $k$-diagram $d$ and
$i_\alpha$, $j_\alpha$ in $\{-1,0,1\}$ for $\alpha = 1, \dots, k$,
label the top row vertices of $d$ from left to right with basis
elements $v_{j_1},\ldots, v_{j_k}$ and similarly label the bottom row
vertices with $v_{i_1},\ldots, v_{i_k}$. The blocks of $d$ are either
isolated vertices or edges.  Then:
\begin{equation}\label{e:M-action}
d (v_{i_1} \otimes \cdots \otimes v_{i_k}) = \sum_{j_1, \ldots, j_k}
(d)_{i_1, \ldots, i_k}^{j_1, \ldots, j_k}\ v_{j_1} \otimes \cdots
\otimes v_{j_k}
\end{equation}
defines the action of $d$, where the scalar $(d)_{i_1, \ldots,
  i_k}^{j_1, \ldots, j_k}$ is the product of the weights taken over
the various blocks of $d$. The weight $(\beta)_{i_1, \ldots,
  i_k}^{j_1, \ldots, j_k}$ of a labeled block $\beta$ of $d$ is
\begin{center}\renewcommand{\arraystretch}{1.3}%
\begin{tabular}{c|l}
  $\delta_{i,0}$ & \rule{0pt}{10pt}
  if $\beta$ is an isolated vertex labeled by $v_i$.
  \\ \hline
  $\delta_{i,j}$ & \rule{0pt}{10pt}
  if $\beta$ is a vertical edge with endpoints labeled by
  $v_i$, $v_j$.
  \\ \hline
  $\bil{v_i}{v_j}_t$ & \rule{0pt}{18pt}
  \begin{minipage}{3.8in}
    if $\beta$ is a horizontal top edge with left and right
    endpoints labeled by $v_i$ and $v_j$, resp.
  \end{minipage}\\ \hline
  $\bil{v_i}{v_j}_b$ & \rule{0pt}{18pt}
  \begin{minipage}{3.8in}
    if $\beta$ is a horizontal bottom edge with left and right
    endpoints labeled by $v_i$ and $v_j$, resp.
  \end{minipage}
\end{tabular}
\end{center}
Here $\delta_{i,j}$ is the usual Kronecker delta function.

\begin{prop}\label{p:Motzkin-action}
  For any $\alpha \ne 0$, the above rules define a left action of
  $\M_k(\delta)$ on $V^{\otimes k}$, where $\delta = 1 \pm
  (q+q^{-1})$.
\end{prop}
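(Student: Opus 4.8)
The plan is to show that the assignment $d \mapsto \rho(d)$ of \eqref{e:M-action} is multiplicative, i.e.\ that $\rho(d_1)\rho(d_2) = \rho(d_1 d_2)$ for all Motzkin $k$-diagrams $d_1, d_2$; since the Motzkin $k$-diagrams span $\M_k(\delta)$ and $\rho(1_k)$ is the identity operator, extending $\rho$ linearly then produces the asserted left action. The key structural point is that the scalar $(d)_{i_1,\dots,i_k}^{j_1,\dots,j_k}$ is a product of block-weights, so when one computes $\rho(d_1)\big(\rho(d_2)(v_{i_1}\otimes\cdots\otimes v_{i_k})\big)$, labels the vertices of the middle row of $\Gamma(d_1,d_2)$ by summation indices $m_1,\dots,m_k$, and collects terms, the result is a sum over labelings $j_1,\dots,j_k$ of the top row whose coefficient is a product over the blocks of $\Gamma(d_1,d_2)$; moreover the sum over $m_1,\dots,m_k$ factors over the connected components of $\Gamma(d_1,d_2)$, since distinct components meet the middle row in disjoint vertex sets. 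On the other side, $\rho(d_1 d_2) = \delta^{N_1(d_1,d_2)}\,\rho(d_1\circ d_2)$, and $(d_1\circ d_2)_{i_1,\dots,i_k}^{j_1,\dots,j_k}$ is a product over the blocks of $d_1\circ d_2$. Thus the identity $\rho(d_1)\rho(d_2) = \rho(d_1 d_2)$ reduces to a finite list of local identities, one per type of connected component $C$ of $\Gamma(d_1,d_2)$: the sum over the labels of the middle-row vertices of $C$ of the product of block-weights of $C$ must equal the weight of the block $C \cap (\text{outer rows})$ of $d_1 \circ d_2$ (with its endpoints carrying the prescribed labels) when $C$ survives into $d_1\circ d_2$, must equal $1 = \delta'$ when $C$ is an interior path, and must equal $\delta$ when $C$ is an interior loop.

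The components built only from vertical edges and isolated vertices need no work: $\sum_m \delta_{i,m}\delta_{m,j} = \delta_{i,j}$ collapses a chain of identity strands (and handles a through-strand that dead-ends at an isolated vertex of the other diagram, reproducing the weight $\delta_{i,0}$ of an isolated vertex of $d_1\circ d_2$); $\sum_m \delta_{m,0}\delta_{m,0} = 1$ handles a single-vertex interior path, i.e.\ an isolated vertex of $d_1$ identified with an isolated vertex of $d_2$; and an isolated outer vertex contributes $\delta_{i,0}$ on both sides trivially. The substance is in the components that snake through the horizontal edges, and here I would first isolate the two \emph{zigzag} identities for the forms $\langle-,-\rangle_t$, $\langle-,-\rangle_b$ of \eqref{e:forms}: the contraction of a cup edge against a cap edge meeting in one vertex straightens to an identity strand, and the closed contraction of a cup against a cap evaluates to the scalar $\delta$. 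Granting these, a through-component of $\Gamma(d_1,d_2)$ that runs from a top vertex of $d_1$, through an alternating sequence of cups of $d_2$ and caps of $d_1$, to a bottom vertex of $d_2$ collapses, one zigzag at a time, to a single vertical edge of $d_1\circ d_2$ carrying weight $\delta_{i,j}$; a path with both endpoints in the top row (respectively bottom row) collapses in the same way to a cup (respectively cap) of $d_1\circ d_2$ with weight $\langle v_i, v_j\rangle_t$ (respectively $\langle v_i, v_j\rangle_b$); and an interior cycle, being an alternating chain of cups and caps, straightens down to a single closed cup--cap contraction, which gives $\delta$. Assembling the per-component evaluations yields $\rho(d_1)\rho(d_2) = \delta^{N_1(d_1,d_2)}\rho(d_1\circ d_2) = \rho(d_1 d_2)$.

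The main obstacle is exactly making the ``collapse by straightening'' step rigorous for snaking paths and cycles of unbounded length: one must pin down the zigzag identities in precisely the form the induction needs, keeping careful track of which of $\langle-,-\rangle_t$, $\langle-,-\rangle_b$ and the Kronecker delta is contracted at each step and of the orientation in which the path traverses each horizontal edge. This is where the numerical values in \eqref{e:forms} are used (equivalently, the middle column and middle row of $B(\alpha,\pm)$, together with $B(\alpha,\pm)^2 = (1\pm(q+q^{-1}))B(\alpha,\pm)$), and where the argument follows \cite{BH}*{\S3.4}; that reference carries out the verification for $\delta = 1-(q+q^{-1})$ with one choice of $\alpha$, so what remains is to check that the same computations go through for every $\alpha\ne 0$ and for $\delta = 1+(q+q^{-1})$. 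Once the zigzag identities are established in the right generality, the per-component bookkeeping of the second paragraph is routine.
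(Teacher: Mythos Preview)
Your approach is essentially the paper's: both reduce multiplicativity to per-component identities in $\Gamma(d_1,d_2)$, both identify the zigzag (straightening) and loop-evaluation identities as the only nontrivial input, and both defer the body of the computation to \cite{BH}*{Prop.~3.29} while noting that what must be checked for general $\alpha\ne 0$ and either sign is just that those identities survive. The paper is slightly more explicit at the last step: it writes down the three exact conditions on the forms that the \cite{BH} argument uses, namely $\bil{v_a^*}{v_a}_b\bil{v_a}{v_a^*}_t=1$, $\bil{v_a}{v_a^*}_b\bil{v_a^*}{v_a}_t=1$, and the values $\bil{v_a}{v_a^*}_t\bil{v_a}{v_a^*}_b$ for $a\in\{1,0,-1\}$, and then observes that these are independent of $\alpha$; you would strengthen your write-up by doing the same rather than leaving ``check that the same computations go through'' as a pointer.
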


\begin{proof}
This is proved by the same argument as the proof of
\cite{BH}*{Prop.~3.29}. In that argument, the notations $v_1^* =
v_{-1}$, $v_0^* = v_0$, $v_{_1}^* = v_1$ were defined. The argument
depends on the following properties of the forms:
\[
\begin{aligned}
  \bil{v_a^*}{v_a}_b \bil{v_a}{v_a^*}_t &= 1,\quad \bil{v_a}{v_a^*}_b
  \bil{v_a^*}{v_a}_t &= 1\\
  \bil{v_a}{v_a^*}_t  \bil{v_a}{v_a^*}_b &=
  \begin{cases}
    -q & \text{ if } a = 1\\
     1 & \text{ if } a = 0\\
    -q^{-1} & \text{ if } a = -1
  \end{cases}
\end{aligned}
\]
and only on those properties. One checks that for any $\alpha \ne 0$,
all of these properties still hold when we use equation
\eqref{e:forms} to define the forms. 
\end{proof}

The following is a slight generalization of results proved in
\cite{BH}*{\S3.4}.

\begin{thm}\label{t:Motzkin-SWD}
Suppose that $\Bbbk$ is a field, $0 \ne q \in \Bbbk$ is not a root of
unity, and $0 \ne \alpha \in \Bbbk$. For any $i = 1, \dots, k-1$, let
$e_i$, $r_i$, $l_i$ act on $V^{\otimes k}$ in tensor positions $i$,
$i+1$ as the operator $1^{\otimes(i-1)} \otimes g \otimes
1^{\otimes(k-i-1)}$, where $g = e$, $r$, $l$ (as operators)
respectively. This extends to an action of $\M_k(1 \pm (q+q^{-1}))$
that commutes with the action of $\UU_q(\fraksl_2)$.  The
corresponding representation $\rho: \M_k(1 \pm (q+q^{-1}))
\to \End_\Bbbk(V^{\otimes k})$ is faithful, thus induces an algebra
isomorphism $\M_k(1 \pm (q+q^{-1}))
\cong \End_{\UU_q(\fraksl_2)}(V^{\otimes k})$.
\end{thm}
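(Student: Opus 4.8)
\emph{Plan of proof.} The assertion splits into three parts: (i) the prescribed operators extend to a representation $\rho = \rho_\alpha$ of $\M_k(1\pm(q+q^{-1}))$ on $V^{\otimes k}$; (ii) $\rho$ commutes with the $\UU_q(\fraksl_2)$-action; and (iii) $\rho$ is faithful, whence — by a dimension count — an isomorphism onto the centralizer. Part (i) is already in hand: it is precisely Proposition~\ref{p:Motzkin-action}, once one observes that under the weight rule \eqref{e:M-action} the Motzkin diagram $e_i$ (resp.\ $r_i$, $l_i$) acts as $1^{\otimes(i-1)}\otimes g\otimes 1^{\otimes(k-1-i)}$ with $g$ the operator $e$ (resp.\ $r$, $l$) of \S\ref{s:SWD-Motzkin}; for $e_i$ this is the matrix $B(\alpha,\pm)$ of \eqref{e:B-matrix} in tensor slots $i,i+1$, and for $r_i,l_i$ it is immediate from \eqref{e:rl}. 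So $\rho$ is the action of Proposition~\ref{p:Motzkin-action}.

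For part (ii) I would invoke Lemma~\ref{l:comult}, which reduces the claim to showing that $e$, $r$, $l\in\End_\Bbbk(V\otimes V)$ commute with $\Delta(u)$ for $u$ running over the generators $E$, $F$, $K^{\pm1}$ of $\UU_q(\fraksl_2)$. This is the verification of \cite{BH}*{\S3.4}, and it is unaffected by the free parameter $\alpha$ since it uses only the identities among the nonzero values of $\bil{-}{-}_t$, $\bil{-}{-}_b$ recorded in the proof of Proposition~\ref{p:Motzkin-action}. Conceptually, $\Bbbk Y_0$ is a trivial $\UU_q(\fraksl_2)$-submodule of $V\otimes V$ and $e$ is a scalar multiple of the $\UU_q(\fraksl_2)$-projection onto it, while $r$ and $l$ are the composites of the equivariant projections $V\otimes V\twoheadrightarrow V\otimes V(0)$ and $V\otimes V\twoheadrightarrow V(0)\otimes V$ with the equivariant flips $v\otimes v_0\mapsto v_0\otimes v$, $v_0\otimes v\mapsto v\otimes v_0$ (equivariant because $V(0)$ is trivial). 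Hence $\rho$ takes values in $\End_{\UU_q(\fraksl_2)}(V^{\otimes k})$.

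Part (iii) is the crux, and the first move I would make is to eliminate the dependence on $\alpha$. Let $D_\alpha\in\End_\Bbbk(V)$ be the diagonal linear map fixing $v_1$ and $v_0$ and sending $v_{-1}\mapsto\alpha v_{-1}$. A direct check on $B(\alpha,\pm)$ and on \eqref{e:rl} shows that $D_\alpha^{\otimes2}$ conjugates the operators $e$, $r$, $l$ attached to one value of the parameter into those attached to any other; consequently $\rho_\alpha = (\operatorname{Ad} D_{\alpha/\alpha'}^{\otimes k})\circ\rho_{\alpha'}$ for any two nonzero parameters, so all the representations $\rho_\alpha$ ($\alpha\ne0$) share a common kernel. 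Note that $D_\alpha$ is not a $\UU_q(\fraksl_2)$-map, so this conjugation only compares kernels — it does not transport the centralizer — but that is all that is needed here: the case treated in \cite{BH}, namely one particular $\alpha$ with the sign $-$, is faithful, being the isomorphism \eqref{e:M-iso}, and the sign $+$ case follows by the analogous computation or from $\M_k(1+(q+q^{-1}))\cong\M_k(1-(q+q^{-1}))$.

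Finally I would combine faithfulness with the inclusion $\operatorname{image}(\rho)\subseteq\End_{\UU_q(\fraksl_2)}(V^{\otimes k})$ from part (ii) and the equality $\dim_\Bbbk\M_k(1\pm(q+q^{-1}))=\dim_\Bbbk\End_{\UU_q(\fraksl_2)}(V^{\otimes k})$ — the left-hand side is the number of Motzkin $k$-diagrams, the right-hand side the sum of squares of path counts in the $\UU_q(\fraksl_2)$-Bratteli diagram \cite{BH}*{Fig.~1} — to conclude that $\operatorname{image}(\rho)$ is all of $\End_{\UU_q(\fraksl_2)}(V^{\otimes k})$, proving the asserted isomorphism. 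The genuine obstacle is faithfulness itself: equivalently (since $\M_k(1\pm(q+q^{-1}))$ is semisimple when $q$ is not a root of unity) every Motzkin cell module $\M^\lambda$, $0\le\lambda\le k$, must occur as a constituent of $V^{\otimes k}$, and this surjectivity-type input is exactly what \cite{BH} supplies; the only new point is its insensitivity to $\alpha$ and to the choice of sign, which the conjugation above takes care of.
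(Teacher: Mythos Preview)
Your proof is correct and follows essentially the same architecture as the paper's: both establish the representation via Proposition~\ref{p:Motzkin-action} (the paper also offers a direct relation-check as an alternative), both reduce commutation with $\UU_q(\fraksl_2)$ to the case $k=2$ via Lemma~\ref{l:comult}, and both defer faithfulness to the argument of \cite{BH}*{Thm.~3.31}. The one substantive variation is your treatment of the parameter $\alpha$: the paper simply asserts that the faithfulness argument of \cite{BH} ``also applies to our situation'' for arbitrary $\alpha\ne0$, whereas you give the explicit conjugation by $D_\alpha^{\otimes k}$ to reduce all values of $\alpha$ to a single one. Your reduction is cleaner and makes the independence from $\alpha$ transparent without reopening the \cite{BH} proof; the paper's approach is shorter but relies on the reader trusting that nothing in the \cite{BH} argument is sensitive to the specific choice $\alpha=q^{-1/2}$.
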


\begin{proof}
The defining relations for $\M_k(\delta)$ are given in \cite{HLP}.  It
is a tedious yet elementary calculation to verify that our operators
$e_i$, $r_i$, $l_i$ satisfy precisely the same relations (for any
$\alpha \ne 0$). It suffices to do the calculation in
$M_3(\delta)$. We used a computer algebra system to create explicit
matrices for the generating operators and verified the defining
relations accordingly. It follows that when $\delta = 1 \pm
(q+q^{-1})$, the action determines a representation
\[
\rho: \M_k(1 \pm (q+q^{-1})) \to \End_\Bbbk(V^{\otimes k}).
\]
Another way to see this is to repeat the proof of
\cite{BH}*{Prop.~3.29} with the appropriate substitutions. One can
check by an elementary direct computation that $e$, $r$, and $l$ (as
operators on $V \otimes V$) commute with the action of
$\UU_q(\fraksl_2)$; here again a computer algebra system is useful. It
then follows from Lemma~\ref{l:comult} that the action of any of the
generators $e_i$, $r_i$, $l_i$ commutes with the action of
$\UU_q(\fraksl_2)$.  Finally, the proof of \cite{BH}*{Thm.~3.31} also
applies to our situation to show that $\rho$ is faithful, and the
result follows.
\end{proof}

\begin{rmk}
(i) Working with $\M_k(1-q-q^{-1})$, Benkart and Halverson \cite{BH}
  choose to define the action of~$e$ (which they denote by
  $\mathfrak{t}$) on $V \otimes V)_0$ in terms of the matrix
\[
B(q^{-1/2},-) =
\begin{bmatrix}
  -q & -q^{1/2} & 1 \\
  q^{1/2} & 1 & -q^{-1/2} \\
  1 & q^{-1/2} & -q^{-1}
\end{bmatrix} 
\]
and define their bilinear forms $\bil{-}{-}_t$, $\bil{-}{-}_b$
accordingly.  In other words, they are setting $\alpha = q^{-1/2}$ and
making a particular choice of sign.  Thus they implicitly assume that
$q^{1/2}$ exists in $\Bbbk$. Our analysis shows that this assumption
is avoidable by simply taking $\alpha = 1$ (or any other convenient
nonzero value).

(ii) If we work instead in $\M_k(1+q+q^{-1})$, where $e^2 =
(1+q+q^{-1}) e$, it is also possible to define the action of $e$ as a
multiple of the \emph{orthogonal} projection onto $Y_0$ (with respect
to the standard bilinear form). The matrix giving the action of $e$ on
$(V\otimes V)_0$ with respect to the same ordered basis as above is
\[
B(\pm q^{-1/2},+) =
\begin{bmatrix}
  q & \mp q^{1/2} & -1 \\
  \mp q^{1/2} & 1 & \pm q^{-1/2} \\
  -1 & \pm q^{-1/2} & q^{-1}
\end{bmatrix} 
\]
With this choice, the corresponding bilinear forms $\bil{-}{-}_t$,
$\bil{-}{-}_b$ become identical.
\end{rmk}

\section{Schur--Weyl duality for $\PTL_k(\delta)$}%
\label{s:SWD}\noindent
We now turn to $\PTL_k(\delta)$, with $\delta = 1 \pm(q+q^{-1})$,
continuing to assume that $0 \ne q \in \Bbbk$ is not a root of unity
and $\Bbbk$ is a field. This algebra acts faithfully on $V^{\otimes
  k}$ by restriction of the action of $\M_k(\delta)$.  Recall from
\eqref{e:tilde-hat} that 
\[
\e_i = \tilde{e}_i = (1-p_i) e_i (1-p_i) .
\]
We have shown in Theorem~\ref{t:generators} that $\PTL_k(\delta)$ is
generated by the $\e_i$, $r_i$, $l_i$ ($i \in [k-1]$). By
Lemma~\ref{l:comult}, it suffices to understand the action in the case
$k=2$. Let $\e$ be the operator on $V \otimes V$ given by the action
of $\e_1$ in $\PTL_2(\delta)$. In terms of Motzkin diagrams we have
the identity
\[
\e = \shade{\epic} \,-\, \shade{\cappic} \,-\,
\shade{\cuppic} \,+\, \shade{\sink} \;.
\]
An explicit calculation with the formulas in the preceding section
reveals that $\e$ acts as
\[
v_{1,-1} \mapsto \pm(q v_{1,-1} - v_{-1,1}), \quad v_{-1,1} \mapsto
\pm(-v_{1,-1} + q^{-1} v_{-1,1})
\]
with all other $v_{i,j} \mapsto 0$. This is independent of the choice
of $\alpha \ne 0$. We note that $\e^2 = \pm(q+q^{-1}) \e$.  In other
words, the restriction of $\e$ to $V(1) \otimes V(1)$ coincides with
the standard action of the generator $e$ in $\TL_2(\pm(q+q^{-1}))$, as
described at the beginning of \S\ref{s:SWD-Motzkin}.

Thus $\e_i$ acts on $V^{\otimes k}$ as the operator $1^{\otimes(i-1)}
\otimes \e \otimes 1^{(k-i-1)}$, for any $i$ in $[k-1]$.

\begin{prop}
The action of $\PTL_k(1 \pm(q+q^{-1}))$ on $V^{\otimes k}$ commutes with
the action of $\UU_q(\gl_2)$.
\end{prop}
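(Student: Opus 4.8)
The plan is to reduce the statement first to the generators of $\PTL_k(\delta)$ and then to the case $k = 2$. By Theorem~\ref{t:generators}, $\PTL_k(\delta)$ is generated by $\{\e_i, r_i, l_i \mid i \in [k-1]\}$, so it is enough to show that each of these operators commutes with the $\UU_q(\gl_2)$-action on $V^{\otimes k}$. Since $\PTL_k(\delta) \subseteq \M_k(\delta)$ and the $\PTL_k(\delta)$-action is obtained by restricting the $\M_k(\delta)$-action, Theorem~\ref{t:Motzkin-SWD} already gives that $\e_i$, $r_i$, $l_i$ commute with $\Delta^{(k)}(E)$, $\Delta^{(k)}(F)$, and $\Delta^{(k)}(K^{\pm 1})$, that is, with the action of $\UU_q(\fraksl_2)$. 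As the $K_i$ commute we have $K_2 = K_1 K^{-1}$, so $\UU_q(\gl_2)$ is generated by $\UU_q(\fraksl_2)$ together with $K_1^{\pm 1}$; hence it remains only to check that $\e_i$, $r_i$, $l_i$ commute with $\Delta^{(k)}(K_1)$.

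Because $K_1$ is group-like, $\Delta(K_1) = K_1 \otimes K_1$ by \eqref{e:coprod}, so $\Delta^{(k)}(K_1) = K_1^{\otimes k}$. This operator acts as $K_1 \otimes K_1$ in tensor slots $i, i+1$ and as $K_1$ in every other slot, while $\e_i = 1^{\otimes(i-1)} \otimes \e \otimes 1^{\otimes(k-1-i)}$ acts as $\e$ in slots $i, i+1$ and trivially elsewhere (and likewise for $r_i$, $l_i$). Consequently $\e_i$ commutes with $K_1^{\otimes k}$ if and only if the $k = 2$ operator $\e$ commutes with $K_1 \otimes K_1$ on $V \otimes V$, and similarly for $r$ and $l$. (One could instead first verify that $\e, r, l \in \End_{\UU_q(\gl_2)}(V \otimes V)$ and then apply Lemma~\ref{l:comult}; note that Lemma~\ref{l:comult} cannot be invoked for the single element $K_1$ alone, since it presupposes full $\UU$-linearity.) So the whole proposition reduces to a short computation on $V \otimes V$.

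For that computation I would observe that $K_1$ acts diagonally on $V$ with $K_1 v_1 = q v_1$, $K_1 v_0 = v_0$, $K_1 v_{-1} = v_{-1}$, so $K_1 \otimes K_1$ scales $v_{i,j}$ by $q$ to the power of the number of indices among $\{i, j\}$ equal to $1$; as $q$ is not a root of unity, its eigenspaces on $V \otimes V$ are the spans of the $v_{i,j}$ with that number held fixed. Now \eqref{e:rl} shows that $r$ sends $v_{i,0} \mapsto v_{0,i}$ and every other basis vector to $0$ (and $l$ is the mirror image), while the displayed formula for $\e$ sends each of $v_{1,-1}$, $v_{-1,1}$ into $\operatorname{span}\{v_{1,-1}, v_{-1,1}\}$ and every other basis vector to $0$. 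In each case the image lies in the same $K_1 \otimes K_1$-eigenspace as the argument, so $\e$, $r$, $l$ commute with $K_1 \otimes K_1$, and the proposition follows.

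The only genuine subtlety, and the reason the proposition holds for $\PTL_k(\delta)$ but not for $\M_k(\delta)$, is that the Motzkin generator $e$ itself does \emph{not} commute with $K_1$: it sends basis vectors to scalar multiples of $Y_0 = v_{0,0} - \alpha q\, v_{1,-1} + \alpha\, v_{-1,1}$, which straddles two $K_1$-eigenspaces. Conjugating by the idempotents $1 - p_i$, as in $\e_i = (1-p_i)e_i(1-p_i)$, precisely annihilates the offending component $v_{0,0}$, confining the image to a single eigenspace; this is exactly how ``partialization'' promotes $\UU_q(\fraksl_2)$-equivariance to $\UU_q(\gl_2)$-equivariance. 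Once this is recognized, no difficult calculation remains.
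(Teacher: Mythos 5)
Your proof is correct and follows essentially the same strategy as the paper: commutation with $E$, $F$ is inherited from the Motzkin result, and for the grading operators one observes that $\e$, $r$, $l$ each preserve the $K_1 \otimes K_1$-eigenspaces of $V \otimes V$. You are somewhat more scrupulous than the paper on two points — you explicitly verify $r$ and $l$ (the paper silently treats only $\e$), and you correctly flag that Lemma~\ref{l:comult} as stated requires full $\UU$-linearity, sidestepping this by using that $K_1$ is group-like so $\Delta^{(k)}(K_1) = K_1^{\otimes k}$; the paper instead first establishes $\e \in \End_{\UU_q(\gl_2)}(V\otimes V)$ and only then invokes the lemma, which is also fine. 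Your closing remark that the unpartialized Motzkin generator $e$ fails because its image $\Bbbk Y_0$ straddles two $K_1$-eigenspaces is a nice conceptual addition not spelled out in the paper.
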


\begin{proof}
By Lemma~\ref{l:comult} it suffices to check that the action of $\e$
on $V \otimes V$ commutes with that of $\UU_q(\gl_2)$. We already know
that it commutes with the action of $E$, $F$, so we only need to check
commutation with the action of $K_1$, $K_2$. When restricted to $\Bbbk
v_{1,-1} \oplus \Bbbk v_{-1,1}$, each $K_i$ acts as $q$, so $K_i$ acts
as $q$ times the identity operator on that subspace. The result
follows.
\end{proof}

With $\delta = 1 \pm (q+q^{-1})$, let $\varphi$ be the restriction of
the representation $\rho: \M_k(\delta) \to \End_\Bbbk(V^{\otimes k})$
to $\PTL_k(\delta)$. Thus
\[
\varphi: \PTL_k(\delta) \to \End_\Bbbk(V^{\otimes k})
\]
is an injective algebra morphism.

\begin{prop}\label{p:triangular}
Suppose that $d = d(A,t,B)$ belongs to $\D(k)$, so that $d = r_A t_0
l_B$ as in Lemma~\textup{\ref{l:Motzkin-factors}}, where $t_0 = t
\otimes \omega_{k-n}$.  Then $\bar{d} = r_A \bar{t}_0 l_B$.
\end{prop}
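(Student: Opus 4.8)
The plan is to deduce the identity from the product formula of Proposition~\ref{p:prod-form}(a) by sliding the idempotents $1-p_i$ past the factors $r_A$ and $l_B$. First I would record the two relevant instances of that formula. Since $A = \topp{d}$ and $B = \bott{d}$ by the definition of $d(A,t,B)$, Proposition~\ref{p:prod-form}(a) gives $\bar{d} = \prod_{i\in A}(1-p_i)\, d \prod_{i\in B}(1-p_i)$. And since $t \in \D_n(n)$ has no isolated vertices, the diagram $t_0 = t\otimes\omega_{k-n}$ satisfies $\topp{t_0} = \bott{t_0} = [n]$, so $\bar{t}_0 = \prod_{i\in[n]}(1-p_i)\, t_0 \prod_{i\in[n]}(1-p_i)$ as well.

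The key step — and the only one I expect to need genuine care — is a commutation identity. Writing $A = \{a_1 < \cdots < a_n\}$ and $B = \{b_1 < \cdots < b_n\}$, I claim that $p_{a_j} r_A = r_A p_j$ and $p_j l_B = l_B p_{b_j}$ in $\M_k(\delta)$ for each $j \in [n]$. This is an elementary diagram computation: in $r_A$ the only edges are the propagating edges joining bottom vertex $j'$ to top vertex $a_j$, and by (the proof of) Proposition~\ref{p:prod-form}(a) left multiplication by $p_{a_j}$ deletes the edge at the top vertex $a_j$ while right multiplication by $p_j$ deletes the edge at the bottom vertex $j'$; these are the same edge, so both products give the same diagram, and no closed loop is created, so no scalar intervenes. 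The statement for $l_B$ is the mirror image. Since the $p_i$ commute pairwise, these identities yield
\[
\textstyle\prod_{i\in A}(1-p_i)\, r_A = r_A \prod_{i\in[n]}(1-p_i),
\qquad
\textstyle\prod_{i\in[n]}(1-p_i)\, l_B = l_B \prod_{i\in B}(1-p_i).
\]

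Assembling these, I would compute
\[
r_A\, \bar{t}_0\, l_B
= r_A \bigl(\textstyle\prod_{i\in[n]}(1-p_i)\bigr) t_0 \bigl(\textstyle\prod_{i\in[n]}(1-p_i)\bigr) l_B
= \bigl(\textstyle\prod_{i\in A}(1-p_i)\bigr)\,(r_A t_0 l_B)\,\bigl(\textstyle\prod_{i\in B}(1-p_i)\bigr)
= \bar{d},
\]
where the last equality uses $r_A t_0 l_B = d$ from Lemma~\ref{l:Motzkin-factors} together with the product formula for $\bar{d}$ recorded above. The only place a scalar could have entered is the product $r_A t_0 l_B$ itself, and its value is exactly $d$ by Lemma~\ref{l:Motzkin-factors}; all the other diagram products appearing above create no loops, as noted in connection with the commutation identities. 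So the whole argument is formal once that one diagram identity $p_{a_j} r_A = r_A p_j$ (and its $l_B$ analogue) is in hand.
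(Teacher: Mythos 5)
Your proof is correct and follows essentially the same route as the paper, which records $\bar{t}_0 = \prod_{i=1}^n(1-p_i)\,t_0\,\prod_{i=1}^n(1-p_i)$ and then slides the idempotents across $r_A$ and $l_B$ by invoking ``considerations similar to those in the proof of Theorem~\ref{t:bar-mult}.''  The one thing you add is the explicit commutation identity $p_{a_j}r_A = r_A p_j$ (and its mirror for $l_B$), which is a clean, checkable way to make the paper's terse appeal precise; everything else matches.
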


\begin{proof}
Observe that by the definition of the bar elements,
\[
\bar{t}_0 = \textstyle \prod_{i=1}^n (1-p_i) \; t_0 \; \prod_{i=1}^n
(1-p_i).
\]
Then considerations similar to those in the proof of Theorem
\ref{t:bar-mult} show that
\[
r_A \, \textstyle \prod_{i=1}^n (1-p_i) \; t_0 \; \prod_{i=1}^n (1-p_i)
\, l_B = \prod_{i \in A} (1-p_i) r_A t_0 l_B \prod_{i \in B}
(1-p_i)
\]
and the result follows.
\end{proof}

To proceed, write $1 = \dot{p} + (1-\dot{p})$, where $1 = 1_V$ is the
identity map on $V$ and $\dot{p}$ is the projection map $V \to
V(0)$. This decomposition (a sum of orthogonal idempotents) induces
the defining decomposition
\[
V = \dot{p} V \oplus (1-\dot{p})V = V(0) \oplus V(1).
\]
As $V(0)$, $V(1)$ are $\UU_q(\gl_2)$-submodules of $V$, the operators
$\dot{p}$, $1-\dot{p}$ commute with the action of
$\UU_q(\gl_2)$. Since $p_i$ acts as $\dot{p}$ on the $i$th tensor
factor and as identity in the remaining factors, we can apply the same
reasoning to the $i$th tensor position in $V^{\otimes k}$ to obtain
the decomposition
\[
V^{\otimes k} = p_iV^{\otimes k} \oplus (1-p_i)V^{\otimes k} =
V^{\otimes i-1} \otimes \big(V(0) \oplus V(1)\big) \otimes
V^{\otimes k-i} .
\]
Expanding the operator $1^{\otimes k} = (\dot{p}+
(1-\dot{p}))^{\otimes k}$ binomially produces an identity
\[
1^{\otimes k} = \textstyle \sum_{A \subseteq [k]} \, \prod_{i \in A} (1-p_i)
\, \prod_{j \in [k] \setminus A} p_j. 
\]
Applying the above expansion to the space $V^{\otimes k}$ produces the
decomposition
\begin{equation}\label{e:tens-dec}
V^{\otimes k} = \textstyle \bigoplus_{A \subseteq [k]} \, V[A]
\end{equation}
where, for a given subset $A$ of $[k]$,
$
V[A] := V_1 \otimes V_2 \otimes \cdots \otimes V_k
$
where $V_i = V(1)$ if $i \in A$ and $V_i = V(0)$ otherwise.

\begin{prop}\label{p:B-to-A}
Let $d \in \D(k)$ and let $(A,d',B)$ be the corresponding triple under
the bijection in Lemma \textup{\ref{l:triples}}. If $n = |A|=|B|$ then
the representation $\bar{d} \mapsto \varphi(\bar{d})$ maps $V[B]$ into
$V[A]$ and maps all other $V[B']$ with $B' \ne B$ to zero.
\end{prop}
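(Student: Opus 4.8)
The plan is to reduce the statement to the idempotent factorisation of $\bar d$ together with the weight rules defining the representation. First I would note that, since the set of non-isolated vertices in the top (resp.\ bottom) row of $d = d(A,d',B)$ is precisely $A$ (resp.\ $B$), we have $\topp d = A$ and $\bott d = B$, so Proposition~\ref{p:prod-form}(a) gives the identity
\[
\bar d \;=\; \prod_{i \in A}(1-p_i)\; d \prod_{i \in B}(1-p_i)
\]
in $\M_k(\delta)$ (equivalently one may start from $\bar d = r_A\,\bar t_0\,l_B$ of Proposition~\ref{p:triangular} and the displayed identity in its proof). Since $\varphi$ is the restriction to $\PTL_k(\delta)$ of the algebra homomorphism $\rho$ defined on all of $\M_k(\delta)$, the operator $\varphi(\bar d)$ equals $\rho\big(\prod_{i\in A}(1-p_i)\big)\,\rho(d)\,\rho\big(\prod_{i\in B}(1-p_i)\big)$ on $V^{\otimes k}$. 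Now $\rho(p_i)$ is the projection $\dot p$ acting in tensor slot $i$, so $\rho\big(\prod_{i\in B}(1-p_i)\big)$ acts on the decomposition \eqref{e:tens-dec} as the identity on each summand $V[B']$ with $B \subseteq B'$ and as zero on every $V[B']$ with $B \not\subseteq B'$; likewise $\rho\big(\prod_{i\in A}(1-p_i)\big)$ is the identity on $V[A']$ for $A \subseteq A'$ and zero otherwise.

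Second, I would extract the two relevant features of $\rho(d)$ directly from the action \eqref{e:M-action} and the weight table of \S\ref{s:SWD-Motzkin}. Because the bottom-row vertices of $d$ outside $B$ are isolated, the coefficient $(d)_{i_1, \ldots, i_k}^{j_1, \ldots, j_k}$ contains the factor $\delta_{i_m,0}$ for every $m \in [k]\setminus B$; hence $\rho(d)$ annihilates any tensor having a non-$v_0$ entry in a slot of $[k]\setminus B$, and in particular annihilates $V[B']$ whenever $B' \not\subseteq B$. Because the top-row vertices of $d$ outside $A$ are isolated, the same coefficient contains $\delta_{j_m,0}$ for every $m \in [k]\setminus A$; hence every output of $\rho(d)$ has $v_0$ in each slot of $[k]\setminus A$, so $\rho(d)\big(V^{\otimes k}\big) \subseteq \bigoplus_{A' \subseteq A} V[A']$.

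Combining these: for $B' \neq B$, either $B \not\subseteq B'$, in which case $\rho\big(\prod_{i\in B}(1-p_i)\big)$ already kills $V[B']$, or $B \subsetneq B'$, in which case that factor is the identity on $V[B']$ but then $B' \not\subseteq B$ forces $\rho(d)$ to kill it; either way $\varphi(\bar d)$ vanishes on $V[B']$. On the remaining summand $V[B]$ the right-hand factor acts as the identity, $\rho(d)$ sends $V[B]$ into $\bigoplus_{A'\subseteq A} V[A']$, and the left-hand factor $\rho\big(\prod_{i\in A}(1-p_i)\big)$ then projects this onto its $A'=A$ component $V[A]$ (it kills $V[A']$ for every $A'\subsetneq A$). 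Hence $\varphi(\bar d)(V[B]) \subseteq V[A]$, as claimed.

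I expect the only genuinely subtle point — and the thing one must not overlook — to be that the bare diagram $\rho(d)$ does \emph{not} by itself carry $V[B]$ into $V[A]$: a cup of $d$ contributes a $v_{0,0}$ term, coming from the value $\bil{v_0}{v_0}_t = 1$, so $\rho(d)(V[B])$ generally meets several $V[A']$ with $A'\subsetneq A$, and similarly $\rho(d)$ need not vanish on $V[B']$ for $B'\subsetneq B$. It is precisely the flanking idempotents $\prod(1-p_i)$ in the expression for $\bar d$ that cut the image down to $V[A]$ and the domain down to $V[B]$, so the point is to argue with $\bar d$ in its factored form rather than with the underlying diagram. Everything else is routine bookkeeping with the weight rules.
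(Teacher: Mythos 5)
Your proof is correct, and it follows a closely related but genuinely different route from the paper's. The paper factors $\bar d = r_A\,\bar{t}_0\,l_B$ via Proposition~\ref{p:triangular} and then tracks what $\varphi(l_B)$, $\varphi(\bar t_0)$, $\varphi(r_A)$ do to the summands $V[Y]$: $\varphi(l_B)$ carries $V[B]$ isomorphically to $V[\{1,\dots,n\}]$, $\varphi(\bar t_0)$ preserves $V[\{1,\dots,n\}]$ and kills every other summand, and $\varphi(r_A)$ carries $V[\{1,\dots,n\}]$ to $V[A]$. You instead start from the idempotent identity $\bar d = \prod_{i\in A}(1-p_i)\,d\,\prod_{i\in B}(1-p_i)$ of Proposition~\ref{p:prod-form}(a), observe that the flanking products are the projections onto $\bigoplus_{A\subseteq A'}V[A']$ and $\bigoplus_{B\subseteq B'}V[B']$ respectively, and analyse the bare operator $\rho(d)$ directly from the weight table. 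The routes are of comparable length, but your bookkeeping is slightly tighter: the paper asserts that $\varphi(r_A)$ vanishes on every $V[Y]$ with $Y\neq\{1,\dots,n\}$, which is literally accurate only for $Y\not\subseteq\{1,\dots,n\}$ (a vertical edge of $r_A$ sends $v_0$ in a slot of $\{1,\dots,n\}\setminus Y$ to $v_0$ in the corresponding slot of $A$, so $V[Y]$ with $Y\subsetneq\{1,\dots,n\}$ is not annihilated); the paper's proof still goes through because $\varphi(\bar t_0)$ filters those components out first, but your version sidesteps the issue. Your closing remark correctly isolates the one genuinely subtle point: the bare diagram $\rho(d)$ does not respect the decomposition \eqref{e:tens-dec}, and the flanking $(1-p_i)$ factors in $\bar d$ are precisely what cut the domain down to $V[B]$ and the range down to $V[A]$.
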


\begin{proof}
Let $d = rtl$ be the factorization in Proposition~\ref{p:triangular},
so that $\bar{d} = r\bar{t}l$. Then $\varphi(\bar{d}) = \varphi(r)
\varphi(\bar{t}) \varphi(l)$. Furthermore, $\varphi(r)$ induces an
isomorphism $V[\{1,\dots,n\}] \to V[A]$ and $\varphi(r) = 0$ on all
$V[Y]$ such that $Y \ne \{1,\dots, n\}$. (The inverse map is obtained
by flipping the diagram $r$ upside down.)  Similarly, $\varphi(l)$
induces an isomorphism $V[B] \to V[\{1,\dots,n\}]$ and $\varphi(l) =
0$ on all $V[Y]$ such that $Y \ne B$. As $\varphi(\bar{t})$ induces a
map from $V[\{1,\dots, n\}] \cong V(1)^{\otimes n}$ into itself, and
is zero on all other components, the result follows.
\end{proof}

We are now ready to prove the following.

\begin{thm}\label{t:SWD}
Suppose that $\Bbbk$ is a field and $0 \ne q \in \Bbbk$ is
not a root of unity. Set $\delta = 1 \pm(q+q^{-1})$. Then
\[
\End_{\UU_q(\gl_2)}(V^{\otimes k}) \cong \PTL_k(\delta).
\]
Hence, $V^{\otimes k}$ satisfies Schur--Weyl duality with respect to
the commuting actions of $\UU_q(\gl_2)$, $\PTL_k(\delta)$.
\end{thm}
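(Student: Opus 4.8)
The plan is to exploit what has already been assembled. We have seen that $\varphi$ is injective and that the image of $\PTL_k(\delta)$ under $\varphi$ commutes with the action of $\UU_q(\gl_2)$, so $\varphi$ corestricts to an injective algebra morphism $\varphi\colon \PTL_k(\delta) \to \End_{\UU_q(\gl_2)}(V^{\otimes k})$. An injective linear map between $\Bbbk$-vector spaces of the same finite dimension is bijective, so it suffices to prove the dimension equality
\[
\dim_\Bbbk \End_{\UU_q(\gl_2)}(V^{\otimes k}) = \rank_\Bbbk \PTL_k(\delta) = \sum_{n=0}^k \binom{k}{n}^2 \mathcal{C}_n,
\]
the last equality being \eqref{e:PTL-dim}.

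First I would decompose $V^{\otimes k}$ as a $\UU_q(\gl_2)$-module. Since $V(0)$ is the trivial module, tensoring with $V(0)$ does not change isomorphism type, so the decomposition \eqref{e:tens-dec} collapses to
\[
V^{\otimes k} \;\cong\; \bigoplus_{n=0}^k \binom{k}{n}\, V(1)^{\otimes n}
\]
as $\UU_q(\gl_2)$-modules, the $n$-th summand accounting for the $\binom{k}{n}$ subsets $A\subseteq[k]$ with $|A|=n$. The crucial structural point --- and the precise way $\gl_2$ differs from $\fraksl_2$ --- is that $V(1,1)$ is \emph{not} trivial (Section~\ref{s:VV}), so by the Pieri rule \eqref{e:Pieri} the module $V(1)^{\otimes n}$ is a sum of simples $V(\mu)$ with $|\mu|$ equal to $n$ exactly. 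Hence, for a partition $\mu$ of at most two parts with $|\mu| = n \le k$, the multiplicity of $V(\mu)$ in $V^{\otimes k}$ equals $\binom{k}{n}\,m_\mu$, where $m_\mu$ denotes its multiplicity in $V(1)^{\otimes n}$.

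Next I would compute $\sum_{|\mu|=n} m_\mu^{\,2}$. As $q$ is not a root of unity, every finite-dimensional type-$\mathbf1$ $\UU_q(\gl_2)$-module is semisimple (Section~\ref{s:quantum}) and each $V(\mu)$ is absolutely simple; consequently $\dim_\Bbbk \End_{\UU_q(\gl_2)}(V(1)^{\otimes n}) = \sum_{|\mu|=n} m_\mu^{\,2}$. On the other hand, \eqref{e:SWD-TL} identifies $\End_{\UU_q(\gl_2)}(V(1)^{\otimes n})$ with $\TL_n(\pm(q+q^{-1})) = \TL_n(\delta-1)$, whose rank over $\Bbbk$ is the Catalan number $\mathcal{C}_n$ (Section~\ref{ss:TL}). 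Therefore $\sum_{|\mu|=n} m_\mu^{\,2} = \mathcal{C}_n$, and, using semisimplicity of $V^{\otimes k}$ once more,
\[
\dim_\Bbbk \End_{\UU_q(\gl_2)}(V^{\otimes k}) = \sum_{n=0}^k \binom{k}{n}^2 \sum_{|\mu|=n} m_\mu^{\,2} = \sum_{n=0}^k \binom{k}{n}^2 \mathcal{C}_n = \rank_\Bbbk \PTL_k(\delta).
\]
Thus $\varphi$ is an isomorphism onto $\End_{\UU_q(\gl_2)}(V^{\otimes k})$, and the commuting actions of $\UU_q(\gl_2)$ and $\PTL_k(\delta)$ on $V^{\otimes k}$ give Schur--Weyl duality.

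There is no serious obstacle once the earlier results are in place; the step that needs the most care is the bookkeeping that $V(\mu)$ with $|\mu| = n$ appears in $V^{\otimes k}$ only through the $\binom{k}{n}$ copies of $V(1)^{\otimes n}$, which is exactly where the non-triviality of the determinant module $V(1,1)$ enters and what forces the squared binomial coefficients to match \eqref{e:PTL-dim}. As an alternative to invoking \eqref{e:SWD-TL}, one may instead read $\dim_\Bbbk\End_{\UU_q(\gl_2)}(V^{\otimes k})$ off the Bratteli diagram in Figure~\ref{Bratteli}: paths of length $k$ to a shape $\lambda$ with $|\lambda|=n$ number $\binom{k}{n} f^\lambda$ (choose the $n$ box-adding steps, then a standard tableau of shape $\lambda$), after which one uses the classical identity $\sum_{\mu\,\vdash\, n,\ \ell(\mu)\le 2}(f^\mu)^2 = \mathcal{C}_n$, a count of $321$-avoiding permutations via RSK.
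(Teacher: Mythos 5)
Your proof is correct and follows essentially the same route as the paper's: establish that $\varphi$ injects into $\End_{\UU_q(\gl_2)}(V^{\otimes k})$, then match dimensions using the decomposition $V^{\otimes k}\cong\bigoplus_{A}V[A]$, the degree observation (simples occurring in $V(1)^{\otimes n}$ all satisfy $|\mu|=n$, which is exactly where the nontriviality of $V(1,1)$ enters), and the Temperley--Lieb identification $\End_{\UU_q(\gl_2)}(V(1)^{\otimes n})\cong\TL_n(\delta-1)$ to produce the Catalan numbers. The only cosmetic difference is organizational: you sum squared total multiplicities of each simple, whereas the paper sums $\dim\Hom_{\UU_q(\gl_2)}(V[A],V[B])$ over all pairs of subsets $(A,B)$ --- the two bookkeeping schemes compute the same quantity by semisimplicity.
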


\begin{proof}
Since the actions commute, $\varphi(\PTL_k(\delta))$ is contained in
the commuting algebra $\End_{\UU_q(\gl_2)}(V^{\otimes k})$. The action
of $\PTL_k(\delta)$ is faithful, so the desired isomorphism will
follow once we show the inclusion is an equality. We do this by
comparing dimensions. By the $\UU_q(\gl_2)$-module decomposition in
equation \eqref{e:tens-dec}, we have
\[
\dim_\Bbbk \End_{\UU_q(\gl_2)}(V^{\otimes k}) = \sum_{A,B} \dim_\Bbbk
\Hom_{\UU_q(\gl_2)}(V[A], V[B])
\]
where the sum is over all pairs $(A,B)$ of subsets of $[k]$. By
classical Schur--Weyl duality, the simple $\UU_q(\gl_2)$-modules
appearing as constituents of $V[A] \cong V(1)^{\otimes n}$, where
$|A|=n$, are all indexed by partitions of $n$ with not more than two
parts. Thus $\Hom_{\UU_q(\gl_2)}(V[B], V[A]) = (0)$ unless $|A|=|B|$.
Furthermore, if $|A|=|B|=n$ for subsets $A,B$ of $[k]$, we have
\[
\Hom_{\UU_q(\gl_2)}(V[B], V[A])
\cong \End_{\UU_q(\gl_2)}(V(1)^{\otimes n}) \cong \TL_n(\delta-1)
\]
by Schur--Weyl duality for Temperley--Lieb algebras \eqref{e:SWD-TL},
so
\[
\dim_\Bbbk
\Hom_{\UU_q(\gl_2)}(V[B], V[A]) = \dim_\Bbbk \TL_n(\delta-1) =
\mathcal{C}_n
\]
where $\mathcal{C}_n$ is the $n$th Catalan number. Putting these
facts together yields the equality
\[
\dim_\Bbbk \End_{\UU_q(\gl_2)}(V^{\otimes k}) = \sum_{n=0}^k
\binom{k}{n}^2 \mathcal{C}_n
\]
which by equation \eqref{e:PTL-dim} agrees with the dimension of
$\PTL_k(\delta)$. This proves the first statement in the theorem. The
remaining claims then follow by standard facts in the theory of
semisimple algebras.
\end{proof}

\begin{cor}\label{c:SWD}
  Under the same hypotheses, we have the decomposition
  \[
  V^{\otimes k} \cong \textstyle \bigoplus_{\lambda \in \Lambda} V(\lambda)
  \otimes \PTL^\lambda
  \]
  as $(\UU_q(\gl_2), \PTL_k(\delta)$-bimodules, where the
  indexing set $\Lambda$ is the set of partitions of $n$ of not more
  than two parts, for $0 \le n \le k$, as in
  Section~\textup{\ref{s:reps}}.
\end{cor}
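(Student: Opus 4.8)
The plan is to obtain the decomposition as a formal consequence of Theorem~\ref{t:SWD} together with semisimplicity of both acting algebras. Since $q$ is not a root of unity, $V^{\otimes k}$ is a semisimple type-$\mathbf{1}$ module for $\UU_q(\gl_2)$ (Section~\ref{s:quantum}), so the image of $\UU_q(\gl_2)$ in $\End_\Bbbk(V^{\otimes k})$ is a semisimple algebra; and $\PTL_k(\delta)$ is semisimple by Remark~\ref{r:ss} and acts faithfully through $\varphi$ by Theorem~\ref{t:SWD}. By that theorem $\varphi(\PTL_k(\delta)) = \End_{\UU_q(\gl_2)}(V^{\otimes k})$, so the images of the two algebras are mutual centralizers in $\End_\Bbbk(V^{\otimes k})$. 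The double centralizer theorem for a pair of mutually centralizing semisimple subalgebras then yields a bimodule isomorphism $V^{\otimes k} \cong \bigoplus_\lambda V(\lambda) \otimes S^\lambda$, the sum over those $\lambda$ for which $V(\lambda)$ occurs in $V^{\otimes k}$, where $S^\lambda := \Hom_{\UU_q(\gl_2)}(V(\lambda), V^{\otimes k})$ is the corresponding simple $\PTL_k(\delta)$-module and $\lambda \mapsto S^\lambda$ is a bijection onto the isomorphism classes of simple $\PTL_k(\delta)$-modules.

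Next I would identify which $\lambda$ occur. Using \eqref{e:tens-dec}, $V^{\otimes k} = \bigoplus_{A\subseteq[k]} V[A]$ with $V[A] \cong V(1)^{\otimes|A|}$ as $\UU_q(\gl_2)$-modules (the $V(0)$-slots contribute trivially to the coproduct action), so by the Pieri rule \eqref{e:Pieri} the constituents of $V^{\otimes k}$ are exactly the $V(\lambda)$ with $\lambda \in \Lambda$. Hence $\{S^\lambda \mid \lambda \in \Lambda\}$ is a complete irredundant list of simple $\PTL_k(\delta)$-modules, and by Theorem~\ref{t:PTL-cell} so is $\{\PTL^\mu \mid \mu \in \Lambda\}$. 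It remains to match the two labellings, i.e.\ to show $S^\lambda \cong \PTL^\lambda$ for each $\lambda$; this is the only step that requires real work.

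For that I would use the block decomposition $\PTL_k(\delta) = \bigoplus_{n=0}^k X(n)$ of Theorem~\ref{t:Xn-dec}. By Proposition~\ref{p:B-to-A}, $\varphi(X(n))$ annihilates $\bigoplus_{|A|\ne n} V[A]$ and preserves $\bigoplus_{|A| = n} V[A]$; since a partition $\lambda$ of $n$ labels a constituent of $V[A]$ only when $|A| = n$, the module $S^\lambda = \Hom_{\UU_q(\gl_2)}\!\big(V(\lambda), \bigoplus_{|A|=n} V[A]\big)$ is supported on the block $X(n)$, just like $\PTL^\lambda \cong Q_n \otimes \TL^\lambda$ of Theorem~\ref{t:PTL-cell}. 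Combining Proposition~\ref{p:B-to-A} with the identification $X(n) \cong \End_\Bbbk(Q_n) \otimes \TL_n(\delta-1)$ of Theorem~\ref{t:Xn} and the computation of Section~\ref{s:SWD} that $\e$ restricts to the standard generator action on $V(1)\otimes V(1)$, one gets $\bigoplus_{|A|=n} V[A] \cong Q_n \otimes V(1)^{\otimes n}$ as a module for $X(n)$ and for $\UU_q(\gl_2)$, with $\End_\Bbbk(Q_n)$ acting on $Q_n$ while $\TL_n(\delta-1)$, by its standard faithful action, and $\UU_q(\gl_2)$, diagonally, act on $V(1)^{\otimes n}$ as the mutually centralizing pair of \eqref{e:SWD-TL}. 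Classical Schur--Weyl duality for $\TL_n(\delta-1)$ gives $V(1)^{\otimes n} \cong \bigoplus_\mu V(\mu) \otimes \TL^\mu$ (over partitions $\mu$ of $n$ with at most two parts), whence $\bigoplus_{|A|=n} V[A] \cong \bigoplus_\mu V(\mu) \otimes (Q_n \otimes \TL^\mu)$ as a $(\UU_q(\gl_2), X(n))$-bimodule; reading off the $V(\mu)$-multiplicity space gives $S^\mu \cong Q_n \otimes \TL^\mu \cong \PTL^\mu$. Summing over $n$ completes the proof.

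The main obstacle is precisely this last identification of the multiplicity spaces $S^\lambda$ with the cell modules $\PTL^\lambda$: once Theorem~\ref{t:SWD} is in hand, everything else is bookkeeping around the double centralizer theorem, but matching the two indexings genuinely uses the $X(n)$-block structure together with Proposition~\ref{p:B-to-A} to reduce to the already-known Temperley--Lieb case. One could instead attempt a pure dimension count, $\dim S^\lambda = \dim \PTL^\lambda = \binom{k}{|\lambda|}\, \rank_\Bbbk \TL^\lambda$, but this pins down the correspondence only if distinct simple $\PTL_k(\delta)$-modules have distinct dimensions, which need not hold, so the block argument is the safer route.
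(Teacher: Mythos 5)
Your proof is correct. The paper's own proof of this corollary is a single sentence: ``This is a standard fact in semisimple representation theory,'' i.e.\ it invokes the double centralizer theorem and leaves everything else implicit. Your argument follows the same outline---double centralizer plus the Pieri rule to identify the index set---but you go further and supply the one step that is not literally automatic: that the multiplicity space $S^\lambda = \Hom_{\UU_q(\gl_2)}(V(\lambda), V^{\otimes k})$ is isomorphic to the \emph{specific} cell module $\PTL^\lambda$ constructed combinatorially in Theorem~\ref{t:PTL-cell}, rather than merely to \emph{some} simple $\PTL_k(\delta)$-module. Your route through the block decomposition $\PTL_k(\delta)=\bigoplus_n X(n)$, Proposition~\ref{p:B-to-A}, the Morita picture of Theorem~\ref{t:Xn}, and the Temperley--Lieb Schur--Weyl duality~\eqref{e:SWD-TL} is a legitimate and careful way to pin down the labeling, reducing the identification to the already-established $\TL_n(\delta-1)$ case on $V(1)^{\otimes n}$ and tensoring by $Q_n$. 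Your closing remark about why a na\"ive dimension count would not suffice to determine the bijection is also well taken. In short, where the paper says ``standard,'' you actually do the matching, which is a useful and correct elaboration that fills a genuine (if small) gap in the paper's exposition; the only implicit ingredient you inherit from the literature is that the Graham--Lehrer cell module $\TL^\mu$ is the $V(\mu)$-multiplicity space in $V(1)^{\otimes n}$, which is itself standard.
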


\begin{proof}
This is a standard fact in semisimple representation theory.
\end{proof}

\begin{rmk}
(i) It makes sense to set $q=1$ in $\PTL_k(1\pm(q+q^{-1}))$, thus
obtaining $\PTL_k(1 \pm 2)$. If the field $\Bbbk$ has characteristic
zero, the analogue of Theorem~\ref{t:SWD} holds. In particular,
\[
\End_{U(\gl_2)}(V^{\otimes k}) \cong \PTL_k(1\pm 2)
\]
where $U(\gl_2)$ is the ordinary universal enveloping algebra of
$\gl_2$.  There is of course also a version of Corollary~\ref{c:SWD}
for this situation.

(ii) If $q$ is a root of unity then Theorem~\ref{t:SWD} (but not
Corollary~\ref{c:SWD}) still holds, provided that $\UU_q(\gl_2)$ is
replaced by an appropriate $\Bbbk$-form, but the proof is very
different. One needs to work with the Lusztig ``integral'' form of the
quantized enveloping algebra and to appeal to the paper \cite{DPS},
which established a version of Jimbo's Schur--Weyl duality at roots of
unity.

(iii) The image of $\UU_q(\gl_2)$ in $\End_\Bbbk(V^{\otimes k})$ is
isomorphic to a generalized $q$-Schur algebra in type A, in the sense
of \cite{D}, defined by the set $\Lambda$.
\end{rmk}


We now derive explicit formulas for the action of $\bar{d}$,
$\tilde{d}$, where $d \in \D(k)$.  Recall from Section~\ref{s:alt}
that
\[
\bar{d} = \prod_{i \in \topp{d}} (1-p_i) \; d\; \prod_{i' \in
  \bott{d}} (1-p_i),\quad \tilde{d} = \prod_{i \in \topp{d}_H} (1-p_i)
\; d \; \prod_{i' \in \bott{d}_H} (1-p_i).
\]
In the representation on tensor space, the element $1-p_i$ is the
operator $1^{\otimes (i-1)}\otimes (1-\dot{p}) \otimes 1^{\otimes
  (k-i)}$, where $\dot{p}$ is projection onto $V(0)$ and hence
$1-\dot{p}$ projects onto $V(1)$. This observation gives the following
result.

\begin{prop}
Suppose that $d$ is in $\D(k)$. Given $i_\alpha$, $j_\alpha$ in the
set $\{-1,0,1\}$ for $\alpha = 1, \dots, k$, label the top row
vertices of $d$ from left to right with $v_{j_1},\ldots, v_{j_k}$ and
similarly label the bottom row vertices with $v_{i_1},\ldots,
v_{i_k}$. Then the action $\bar{d}$ on $V^{\otimes k}$ is
given by
\[
\bar{d} (v_{i_1} \otimes \cdots \otimes v_{i_k}) = \sum_{j_1,
  \ldots, j_k} (\bar{d})_{i_1, \ldots, i_k}^{j_1, \ldots,
  j_k}\ v_{j_1} \otimes \cdots \otimes v_{j_k}
\]
and similarly the action $\tilde{d}$ on $V^{\otimes k}$ is
given by
\[
\tilde{d} (v_{i_1} \otimes \cdots \otimes v_{i_k}) =
\sum_{j_1, \ldots, j_k} (\tilde{d})_{i_1, \ldots, i_k}^{j_1, \ldots,
  j_k}\ v_{j_1} \otimes \cdots \otimes v_{j_k}
\]
where the scalars $(\bar{d})_{i_1, \ldots, i_k}^{j_1, \ldots, j_k}$
and $(\bar{d})_{i_1, \ldots, i_k}^{j_1, \ldots, j_k}$ are the product
over the modified weights of the labeled blocks of $d$. The modified
weight of a block $\beta$ in $d$ is the same is its weight minus a
correction term of $\delta_{i,0}\,\delta_{j,0}$ applied to all (resp.,
all horizontal) edges of $d$ in computing the action of $\bar{d}$
$(resp., \tilde{d}$).
\end{prop}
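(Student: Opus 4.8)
The plan is to combine the product formula $\bar d=\prod_{i\in\topp{d}}(1-p_i)\,d\,\prod_{i'\in\bott{d}}(1-p_i)$ of Proposition~\ref{p:prod-form}(a), and its analogue $\tilde d=\prod_{i\in\topp{d}_H}(1-p_i)\,d\,\prod_{i'\in\bott{d}_H}(1-p_i)$ from \eqref{e:tilde-hat}, with the observation recorded just above: on $V^{\otimes k}$ the operator $1-p_i$ is the projection onto $V(1)$ in the $i$-th tensor slot, so it multiplies $v_{i_1}\otimes\cdots\otimes v_{i_k}$ by the Iverson bracket $[\,i_i\ne 0\,]$. Since the representation $\rho$ is an algebra homomorphism, the action of $\bar d$ on $v_{i_1}\otimes\cdots\otimes v_{i_k}$ is: first multiply by $\prod_{\alpha\in\bott{d}}[\,i_\alpha\ne 0\,]$ (the right-hand factors project the bottom indices at the non-isolated bottom positions), then apply $d$ using \eqref{e:M-action}, then retain only those output terms $v_{j_1}\otimes\cdots\otimes v_{j_k}$ with $j_\alpha\ne 0$ for every $\alpha\in\topp{d}$. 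This gives
\[
(\bar d)_{i_1,\dots,i_k}^{j_1,\dots,j_k}=\Big(\prod_{\alpha\in\bott{d}}[\,i_\alpha\ne 0\,]\Big)\,(d)_{i_1,\dots,i_k}^{j_1,\dots,j_k}\,\Big(\prod_{\alpha\in\topp{d}}[\,j_\alpha\ne 0\,]\Big),
\]
and the same formula for $\tilde d$ with the two products taken over $\bott{d}_H$, $\topp{d}_H$ in place of $\bott{d}$, $\topp{d}$.

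Next I would distribute the Iverson factors over the blocks of $d$. Each non-isolated vertex of $d$ lies on a unique edge, so the total factor $\prod_{\alpha\in\bott{d}}[\,i_\alpha\ne 0\,]\,\prod_{\alpha\in\topp{d}}[\,j_\alpha\ne 0\,]$ breaks up as a product $\prod_\beta B_\beta$ over the edges $\beta$ of $d$, where $B_\beta=[\,a\ne 0\,][\,b\ne 0\,]$ and $v_a,v_b$ are the labels carried by the two endpoints of $\beta$; isolated vertices contribute no Iverson factor. Hence $(\bar d)_{i_1,\dots}^{j_1,\dots}$ equals the product over all blocks $\beta$ of $d$ of $w(\beta)\,B_\beta$, with the convention $B_\beta=1$ for isolated vertices, where $w(\beta)$ is the weight of the labeled block in the sense of \eqref{e:M-action}. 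For $\tilde d$ nothing changes except that $B_\beta$ is attached only to the horizontal edges, each vertical edge retaining its unmodified weight.

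It then remains to check the per-edge identity $w(\beta)\,B_\beta=w(\beta)-\delta_{a,0}\delta_{b,0}$, equivalently $w(\beta)\bigl(1-[\,a\ne 0\,][\,b\ne 0\,]\bigr)=\delta_{a,0}\delta_{b,0}$. Since $1-[\,a\ne 0\,][\,b\ne 0\,]$ equals $1$ when $a=0$ or $b=0$ and $0$ otherwise, this reduces to: the weight function is $1$ at labels $(v_0,v_0)$ and vanishes at $(v_0,v_b)$ and $(v_a,v_0)$ whenever $a,b\ne 0$. For a vertical edge $w=\delta_{a,b}$ and these are obvious; for a horizontal top (resp.\ bottom) edge $w=\bil{v_a}{v_b}_t$ (resp.\ $\bil{v_a}{v_b}_b$), and the normalization $\bil{v_0}{v_0}_t=\bil{v_0}{v_0}_b=1$ together with the required vanishing are read directly off \eqref{e:forms}, for any $\alpha\ne 0$. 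Taking the product of these identities over all edges, together with the unchanged isolated-vertex weights $\delta_{i,0}$, yields the stated formula for $(\bar d)_{i_1,\dots}^{j_1,\dots}$; the $\tilde d$ case follows in the same way once the Iverson factors are confined to horizontal edges. The only substantive step is the small case analysis against \eqref{e:forms}; the main piece of bookkeeping is matching each non-isolated vertex with the edge on which it sits, and I do not anticipate any real obstacle.
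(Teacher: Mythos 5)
Your proof is correct and follows the same route the paper takes: use the observation that $1-p_i$ acts as the projection onto $V(1)$ in the $i$-th slot, combine it with the product formulas for $\bar d$ and $\tilde d$ and the weight table from \eqref{e:M-action}, and check the resulting per-block identity by cases against \eqref{e:forms}. The paper states this in one line; you have simply made the per-edge bookkeeping explicit.
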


\begin{proof}
This follows from the observation preceding the proposition and the
table of weights preceding Proposition \ref{p:Motzkin-action}, by
considering the cases separately.
\end{proof}

\appendix
\section{Semisimplicity criterion for $\TL_k(\pm(q+q^{-1}))$}\label{a:TL}
\noindent
The purpose of this Appendix is to highlight a precise semisimplicity
criterion for Temperley--Lieb algebras that arose in the work of
Vaughan Jones, following the exposition of \cite{GHJ}.  Assume that
$\Bbbk$ is a field. For $q \in \Bbbk$, let
\[
[n]_q := 1+q+q^2+\cdots+q^{n-1}
\]
in $\Z[q]$ be the usual quantum integer (regarded as an element of
$\Bbbk$) and define $[k]^!_q = [1]_q [2]_q \cdots [k]_q$. If $q \ne 0$
then the balanced quantum integer $\Brackets{n}_q$ in $\Z[q, q^{-1}]$
may be defined as
\[
\Brackets{n}_q := q^{-(n-1)} [n]_{q^2} .
\]
We also define $\Brackets{k}^!_q = \Brackets{1}_q
\Brackets{2}_q \cdots \Brackets{k}_q$. Here is the criterion.

\begin{thm}[\cite{GHJ}]\label{t:ss-TL}
  If $\Bbbk$ is a field and $0 \ne q \in \Bbbk$ satisfies
  $\Brackets{k}^!_q \ne 0$ then $\TL_k(\pm(q+q^{-1}))$
  is semisimple over $\Bbbk$.
\end{thm}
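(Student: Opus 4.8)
The plan is to deduce this from the Graham--Lehrer cellular structure on $\TL_k(\delta)$ recalled in Section~\ref{s:TL-mod}, combined with the Gram determinant computations carried out (in the language of towers of algebras and the Markov trace) in \cite{GHJ}. First I would use the sign isomorphism \eqref{e:TL-sign} to reduce to the case $\delta = q + q^{-1} = \Brackets{2}_q$, so that the claim becomes: $\TL_k(\Brackets{2}_q)$ is semisimple over the field $\Bbbk$ whenever $\Brackets{1}_q\Brackets{2}_q\cdots\Brackets{k}_q \ne 0$. Since a cellular algebra over a field is semisimple if and only if it is split semisimple, there is no need to worry separately about splitness.

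Next I would invoke the general criterion of Graham and Lehrer: a cellular algebra over a field is semisimple precisely when, for each label $\lambda$, the Gram matrix $G^\lambda$ of the canonical bilinear form $\phi^\lambda$ on the cell module $\TL^\lambda = W_0^{\le \lambda}/W_0^{<\lambda}$ is nonsingular. Here the cell modules are indexed by the number $m$ of through-strands ($0 \le m \le k$, $m \equiv k \bmod 2$), with $\Bbbk$-basis the planar involutions on $k$ points having $m$ fixed points (the half-diagrams of Section~\ref{s:TL-mod}), and $\phi^\lambda$ is computed by the usual diagram-pairing: glue one half-diagram on top of the (reflected) other, read off $\delta^{N}$ if the result is $\delta^{N}$ times the identity half-diagram of the right defect, and $0$ otherwise. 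Thus each $G^\lambda$ is a matrix with entries in $\Z[\delta]$, and $\det G^\lambda$ is a well-defined polynomial in $\delta$.

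The heart of the argument is to identify this polynomial. Using the tower $\TL_{k-1}(\delta) \subset \TL_k(\delta)$ and the generator $e_{k-1}$, one gets a recursion expressing $\det G^\lambda_k$ in terms of the determinants $\det G^\mu_{k-1}$ for the cell modules $\mu$ of $\TL_{k-1}(\delta)$ occurring in the restriction of $\TL^\lambda$, multiplied by a new scalar at each step; that scalar is governed by the $q$-Chebyshev recursion $\Brackets{n+1}_q = \Brackets{2}_q\Brackets{n}_q - \Brackets{n-1}_q$, which is exactly the trace/basic-construction computation underlying \cite{GHJ}. Iterating, $\det G^\lambda_k$ equals, up to sign, a monomial $\prod_{j=1}^{k} \Brackets{j}_q^{\,a_j(\lambda)}$ in the balanced quantum integers with non-negative integer exponents $a_j(\lambda)$ (one can sanity-check this for small $k$, e.g.\ $\det G_3 = \delta^2-1 = \Brackets{3}_q$ and $\det G_4 = \delta^2(\delta^2-1) = \Brackets{2}_q^2\Brackets{3}_q$ on the defect-$0$ modules). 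Since the hypothesis $\Brackets{k}^!_q \ne 0$ forces $\Brackets{1}_q, \dots, \Brackets{k}_q$ all to be nonzero (hence invertible) in $\Bbbk$, every $\det G^\lambda_k$ is nonzero, so all the cell-module forms are nondegenerate and $\TL_k(\Brackets{2}_q)$ is semisimple. The final ``in particular'' statement follows because $\Brackets{j}_q \ne 0$ for every $j$ when $q$ is not a root of unity.

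I expect the main obstacle to be the third step: faithfully transcribing the von Neumann algebra / principal-graph computations of \cite{GHJ} into a statement about the Gram determinants of the Graham--Lehrer cellular forms on $\TL_k(\delta)$, and tracking the indices carefully enough to see that only $\Brackets{j}_q$ with $j \le k$ can occur. Once that dictionary is in place, the deduction of semisimplicity from $\Brackets{k}^!_q \ne 0$ is immediate bookkeeping with the explicit half-diagram basis.
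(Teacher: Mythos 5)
Your approach is genuinely different from the paper's. The paper does not touch cellular theory or Gram determinants at all: it introduces the Jones algebra $A_k(\beta)$ on idempotent generators $u_i$, quotes the explicit semisimplicity criterion \cite{GHJ}*{Prop.~2.8.5(a)} in terms of the polynomials $P_n(\beta^{-1})$, identifies $P_n(\beta^{-1})$ with $[n+1]_q/(1+q)^n$ via \cite{GHJ}*{Prop.~2.8.3(iv)}, and then transports the result to $\TL_k(\delta)$ via the rescaling isomorphism $A_k(\delta^2)\cong\TL_k(\delta)$, $e_i=\delta u_i$, followed by the substitution $q^{1/2}\mapsto q$. That is essentially a one-step citation plus bookkeeping; your route via the Graham--Lehrer cell forms is a well-trodden alternative, and it has the virtue of being self-contained in the language of Section~\ref{s:TL-mod}, but it pushes the actual content into the Gram determinant formula, which you do not prove and which is not in \cite{GHJ} in that form.

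There is also a genuine factual slip in your sketch. The Gram determinant $\det G^\lambda_k$ is \emph{not} a monomial $\prod_{j\le k}\Brackets{j}_q^{\,a_j(\lambda)}$. The correct closed form (Lickorish, Westbury, Martin, Ridout--Saint-Aubin) is
\[
\det G_{k,p}\;=\;\prod_{j=1}^{(k-p)/2}\Bigl(\tfrac{\Brackets{p+j+1}_q}{\Brackets{j}_q}\Bigr)^{\dim V_{k,p+2j}},
\]
a product of \emph{ratios} of balanced quantum integers. Already for $k=5$, $p=1$ one gets $\det G_{5,1}=\Brackets{3}_q^{4}\,\bigl(\Brackets{4}_q/\Brackets{2}_q\bigr)=\Brackets{3}_q^{4}\,(q^2+q^{-2})$, and $q^2+q^{-2}$ is not a product of $\Brackets{j}_q$'s. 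The conclusion still goes through: all numerator indices $p+j+1$ satisfy $p+j+1\le(k+p)/2+1\le k$ (since $p\le k-2$ on any nontrivial cell module), and if $q^{2m}=1$ forces $\Brackets{m}_q=0$ while $\Brackets{j}_q\ne 0$ for $j<m$, one checks that each ratio $\Brackets{p+j+1}_q/\Brackets{j}_q$ is a Laurent polynomial in $q$ that is nonzero under the hypothesis $\Brackets{k}_q^{!}\ne 0$. So your deduction is salvageable, but as written the monomial claim is wrong, and the step you flag as ``the main obstacle'' -- proving the Gram determinant formula via the recursion -- really is the entire content of the theorem. The paper avoids this by outsourcing it to \cite{GHJ}'s self-contained inductive criterion for $A_k(\beta)$.
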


\begin{proof}
For $0 \ne \beta \in \Bbbk$, the Jones algebra $A_k(\beta)$ is the
algebra with $1$ on generators $u_1, \dots, u_{k-1}$ subject to the
defining relations
\begin{equation*}
  u_i^2 = u_i, \quad \beta u_i u_{i\pm 1} u_i = u_i, \quad u_iu_j =
  u_j u_i \text{ if } |i-j|>1.
\end{equation*}
By Prop.~2.8.5(a) in \cite{GHJ}, $A_k(\beta)$ is (split) semisimple over
$\Bbbk$ if
\[
P_1(\beta^{-1}) P_2(\beta^{-1}) \cdots P_{k-1}(\beta^{-1}) \ne 0,
\]
where the $P_n(x)$ are polynomials in $\Z[x]$ satisfying the recursion
$P_0(x)=1$, $P_1(x)=1$, and $P_{n+1}(x) = P_n(x)-xP_{n-1}(x)$ for all
$n \ge 1$. Choose $q$ in $\Bbbk$ such that $q \ne 0$, $q \ne -1$, and
$\beta = q+q^{-1}+2$. (Replace $\Bbbk$ by a suitable quadratic
extension if necessary.) By Prop.~2.8.3(iv) in \cite{GHJ},
\[
P_n(\beta^{-1}) = \frac{1+q+q^2 + \cdots + q^n}{(1+q)^n} =
\frac{[n+1]_q}{(1+q)^n}.
\]
Hence, $A_k(\beta) = A_k(q+q^{-1}+2)$ is semisimple over $\Bbbk$ if
$[k]^!_q \ne 0$.  Now, by setting $e_i = \delta u_i$ for all $i$ we
recover the defining relations \eqref{e:TL-rels} if and only if $\beta
= \delta^2$, so $A_k(\delta^2) \cong \TL_k(\delta)$. We conclude that
if $[k]^!_q \ne 0$ then $\TL_k(\pm(q^{1/2}+q^{-1/2}))$ is semisimple
over $\Bbbk$.  To obtain the final conclusion, we replace $q^{1/2}$ by
$q$. This has the effect of replacing $[k]^!_q$ by $\Brackets{k}^!_q$,
up to a power of $q$.
\end{proof}

\begin{rmk}
It makes sense to specialize $q$ to $1$ in Theorem~\ref{t:ss-TL}.
Then $\Brackets{k}^!_1 = k!$ is the ordinary factorial of $k$, and the
semisimplicity criterion coincides with the one appearing in Maschke's
theorem for finite symmetric groups. This is no accident, as
$\TL_k(\pm(q+q^{-1}))$ is a quotient of an appropriate Iwahori--Hecke
algebra of type A.
\end{rmk}

\begin{bibdiv}
\begin{biblist}

\bib{BH}{article}{
    author={Benkart, Georgia},
    author={Halverson, Tom},
    title={Motzkin algebras},
    journal={European J. Combin.},
    volume={36},
    date={2014},
    pages={473--502},
}

\bib{BH1}{article}{
   author={Benkart, Georgia},
   author={Halverson, Tom},
   title={Partition algebras and the invariant theory of the symmetric
   group},
   conference={
      title={Recent trends in algebraic combinatorics},
   },
   book={
      series={Assoc. Women Math. Ser.},
      volume={16},
      publisher={Springer, Cham},
   },
   date={2019},
   pages={1--41},
}
		
\bib{BH2}{article}{
   author={Benkart, Georgia},
   author={Halverson, Tom},
   title={Partition algebras $\mathsf{P}_k(n)$ with $2k>n$ and the
   fundamental theorems of invariant theory for the symmetric group
   $\mathsf{S}_n$},
   journal={J. Lond. Math. Soc. (2)},
   volume={99},
   date={2019},
   number={1},
   pages={194--224},
}


\bib{BW}{article}{
   author={Birman, Joan S.},
   author={Wenzl, Hans},
   title={Braids, link polynomials and a new algebra},
   journal={Trans. Amer. Math. Soc.},
   volume={313},
   date={1989},
   number={1},
   pages={249--273},
}

\bib{DEG}{article}{
   author={Dolinka, Igor},
   author={East, James},
   author={Gray, Robert D.},
   title={Motzkin monoids and partial Brauer monoids},
   journal={J. Algebra},
   volume={471},
   date={2017},
   pages={251--298},
}

\bib{D}{article}{
   author={Doty, Stephen},
   title={Presenting generalized $q$-Schur algebras},
   journal={Represent. Theory},
   volume={7},
   date={2003},
   pages={196--213},
}

\bib{DG:braid}{article}{
   author={Doty, Stephen},
   author={Giaquinto, Anthony},
   title={Schur--Weyl duality for tensor powers of the Burau representation},
   journal={Res. Math. Sci.},
   volume={8},
   date={2021},
   number={3},
   pages={Paper No. 47, 33 pages},
}

\bib{DG:twin}{article}{
  author={Doty, Stephen},
  author={Giaquinto, Anthony},
  title={Schur--Weyl duality for twin groups},
  journal={Trans. Groups},
  date={2022},
  doi={10.1007/s00031-022-09708-w},
}


\bib{DPS}{article}{
   author={Du, Jie},
   author={Parshall, Brian},
   author={Scott, Leonard},
   title={Quantum Weyl reciprocity and tilting modules},
   journal={Comm. Math. Phys.},
   volume={195},
   date={1998},
   number={2},
   pages={321--352},
}

\bib{GL:96}{article}{
   author={Graham, J. J.},
   author={Lehrer, G. I.},
   title={Cellular algebras},
   journal={Invent. Math.},
   volume={123},
   date={1996},
   number={1},
   pages={1--34},
}

\bib{Green-Paget}{article}{
   author={Green, Reuben},
   author={Paget, Rowena},
   title={Iterated inflations of cellular algebras},
   journal={J. Algebra},
   volume={493},
   date={2018},
   pages={341--345},
}

\bib{GHJ}{book}{
   author={Goodman, Frederick M.},
   author={de la Harpe, Pierre},
   author={Jones, Vaughan F. R.},
   title={Coxeter graphs and towers of algebras},
   series={Mathematical Sciences Research Institute Publications},
   volume={14},
   publisher={Springer-Verlag, New York},
   date={1989},
}

\bib{HdM}{article}{
   author={Halverson, Tom},
   author={delMas, Elise},
   title={Representations of the Rook-Brauer algebra},
   journal={Comm. Algebra},
   volume={42},
   date={2014},
   number={1},
   pages={423--443},
}

\bib{HR:05}{article}{
   author={Halverson, Tom},
   author={Ram, Arun},
   title={Partition algebras},
   journal={European J. Combin.},
   volume={26},
   date={2005},
   number={6},
   pages={869--921},
}

\bib{HLP}{article}{
  author={Hatch, Kris},
  author={Ly, Megan},
  author={Posner, Eliezer},
  title={Presentation of the Motzkin monoid},
  date={2013},
  eprint={arXiv.org},
  status={preprint},
}

\bib{Jantzen}{book}{
   author={Jantzen, Jens Carsten},
   title={Lectures on quantum groups},
   series={Graduate Studies in Mathematics},
   volume={6},
   publisher={American Mathematical Society, Providence, RI},
   date={1996},
}

\bib{J83}{article}{
   author={Jones, V. F. R.},
   title={Index for subfactors},
   journal={Invent. Math.},
   volume={72},
   date={1983},
   number={1},
   pages={1--25},
}

\bib{J85}{article}{
   author={Jones, Vaughan F. R.},
   title={A polynomial invariant for knots via von Neumann algebras},
   journal={Bull. Amer. Math. Soc. (N.S.)},
   volume={12},
   date={1985},
   number={1},
   pages={103--111},
}

\bib{J86}{article}{
   author={Jones, V. F. R.},
   title={Braid groups, Hecke algebras and type ${\rm II}_1$ factors},
   conference={
      title={Geometric methods in operator algebras},
      address={Kyoto},
      date={1983},
   },
   book={
      series={Pitman Res. Notes Math. Ser.},
      volume={123},
      publisher={Longman Sci. Tech., Harlow},
   },
   date={1986},
   pages={242--273},
}

\bib{J87a}{article}{
   author={Jones, V. F. R.},
   title={Hecke algebra representations of braid groups and link
   polynomials},
   journal={Ann. of Math. (2)},
   volume={126},
   date={1987},
   number={2},
   pages={335--388},
}

\bib{J87b}{article}{
   author={Jones, V. F. R.},
   title={Subfactors of type ${\rm II}_1$ factors and related topics},
   conference={
      title={Proceedings of the International Congress of Mathematicians,
      Vol. 1, 2},
      address={Berkeley, Calif.},
      date={1986},
   },
   book={
      publisher={Amer. Math. Soc., Providence, RI},
   },
   date={1987},
   pages={939--947},
}

\bib{Jones:94}{article}{
   author={Jones, V. F. R.},
   title={The Potts model and the symmetric group},
   conference={
      title={Subfactors},
      address={Kyuzeso},
      date={1993},
   },
   book={
      publisher={World Sci. Publ., River Edge, NJ},
   },
   date={1994},
   pages={259--267},
}

\bib{K:87}{article}{
   author={Kauffman, Louis H.},
   title={State models and the Jones polynomial},
   journal={Topology},
   volume={26},
   date={1987},
   number={3},
   pages={395--407},
}



\bib{Kauff}{book}{
   author={Kauffman, Louis H.},
   title={Knots and physics},
   series={Series on Knots and Everything},
   volume={53},
   edition={4},
   publisher={World Scientific Publishing Co. Pte. Ltd., Hackensack, NJ},
   date={2013},
}

\bib{KS}{book}{
   author={Klimyk, Anatoli},
   author={Schm\"{u}dgen, Konrad},
   title={Quantum groups and their representations},
   series={Texts and Monographs in Physics},
   publisher={Springer-Verlag, Berlin},
   date={1997},
}

\bib{KX:99}{article}{
   author={K\"{o}nig, Steffen},
   author={Xi, Changchang},
   title={Cellular algebras: inflations and Morita equivalences},
   journal={J. London Math. Soc. (2)},
   volume={60},
   date={1999},
   number={3},
   pages={700--722},
}
		
\bib{KX:01}{article}{
   author={K\"{o}nig, Steffen},
   author={Xi, Changchang},
   title={A characteristic free approach to Brauer algebras},
   journal={Trans. Amer. Math. Soc.},
   volume={353},
   date={2001},
   number={4},
   pages={1489--1505},
}

\bib{Lusztig}{book}{
   author={Lusztig, George},
   title={Introduction to quantum groups},
   series={Progress in Mathematics},
   volume={110},
   publisher={Birkh\"{a}user Boston, Inc., Boston, MA},
   date={1993},
}

\bib{Martin:book}{book}{
   author={Martin, Paul},
   title={Potts models and related problems in statistical mechanics},
   series={Series on Advances in Statistical Mechanics},
   volume={5},
   publisher={World Scientific Publishing Co., Inc., Teaneck, NJ},
   date={1991},
}

\bib{MM}{article}{
   author={Martin, Paul},
   author={Mazorchuk, Volodymyr},
   title={On the representation theory of partial Brauer algebras},
   journal={Q. J. Math.},
   volume={65},
   date={2014},
   number={1},
   pages={225--247},
}

\bib{Mazor}{article}{
   author={Mazorchuk, Volodymyr},
   title={Endomorphisms of $\germ B_n,\ \scr P\germ B_n$, and $\germ C_n$},
   journal={Comm. Algebra},
   volume={30},
   date={2002},
   number={7},
   pages={3489--3513},
}

\bib{Munn}{article}{
   author={Munn, W. D.},
   title={On semigroup algebras},
   journal={Proc. Cambridge Philos. Soc.},
   volume={51},
   date={1955},
   pages={1--15},
}

\bib{Ridout-StAubin}{article}{
   author={Ridout, David},
   author={Saint-Aubin, Yvan},
   title={Standard modules, induction and the structure of the
   Temperley-Lieb algebra},
   journal={Adv. Theor. Math. Phys.},
   volume={18},
   date={2014},
   number={5},
   pages={957--1041},
}
	

\bib{TL}{article}{
   author={Temperley, H. N. V.},
   author={Lieb, E. H.},
   title={Relations between the ``percolation'' and ``colouring'' problem
   and other graph-theoretical problems associated with regular planar
   lattices: some exact results for the ``percolation'' problem},
   journal={Proc. Roy. Soc. London Ser. A},
   volume={322},
   date={1971},
   number={1549},
   pages={251--280},
}

\bib{Westbury}{article}{
   author={Westbury, B. W.},
   title={The representation theory of the Temperley-Lieb algebras},
   journal={Math. Z.},
   volume={219},
   date={1995},
   number={4},
   pages={539--565},
}

\end{biblist}
\end{bibdiv}
\end{document}